\documentclass[12pt]{amsart}
\usepackage{graphicx}
\usepackage{amsmath}
\usepackage{amsfonts}
\usepackage{amssymb}
\pdfoutput=1
\usepackage{setspace}
\usepackage{datetime}
\usepackage[colorlinks,
            linkcolor=black,
            anchorcolor=blue,
            citecolor=black]{hyperref}
\usepackage{geometry}
\allowdisplaybreaks[4]

\newtheorem{thm}{Theorem}[section]
\newtheorem{cor}[thm]{Corollary}

\newtheorem{lem}[thm]{Lemma}

\theoremstyle{definition}
\newtheorem{defn}[thm]{Definition}
\theoremstyle{remark}
\newtheorem{rem}[thm]{Remark}
\theoremstyle{conclusion}

\theoremstyle{conjecture}

\numberwithin{equation}{section}
\newcommand{\R}{\mathbb{R}}

\newcommand{\N}{\mathbb{N}}

\newcommand{\be}{\begin{equation}}
\newcommand{\ee}{\end{equation}}

\begin{document}
\title[Classification results for critical Schr\"{o}dinger system]{Critical quasi-linear Schr\"{o}dinger system with $p$-Laplacian}

\author{Neng cheng, Wei Dai, Zhao Liu}

\address{School of Mathematical Sciences, Jiangxi Science and Technology Normal University, Nanchang 330038, P. R. China}
\email{Chengn1108@126.com}

\address{School of Mathematical Sciences, Beihang University (BUAA), Beijing 100191, P. R. China, and Key Laboratory of Mathematics, Informatics and Behavioral Semantics, Ministry of Education, Beijing 100191, P. R. China}
\email{weidai@buaa.edu.cn}

\address{School of Mathematical Sciences, Jiangxi Science and Technology Normal University, Nanchang 330038, P. R. China}
\email{liuzhao@mail.bnu.edu.cn}

\thanks{Wei Dai is supported by the NNSF of China (No. 12222102 \& No. 12571113), the National Science and Technology Major Project (2022ZD0116401) and the Fundamental Research Funds for the Central Universities. Zhao Liu is supported by the NNSF of China (No. 12261041) and the Natural Science Foundation of Jiangxi Province (No. 20232ACB211002).}

\begin{abstract}
In this paper, we mainly consider positive solution to the $D^{1,p}(\R^{N})$-critical quasi-linear Schr\"{o}dinger system with $p$-Laplacian:
\begin{equation*}\begin{cases}
-\Delta_p u = u^{\alpha}v^{\beta}  \, \ \ \ \ \ \text{in}\,\ \ \R^N, \\
-\Delta_p v = u^{\beta}v^{\alpha}  \,\  \ \ \ \ \text{in}\,\ \ \R^N,
\end{cases}\end{equation*}
where $1<p<N$, $N\geq2$, $0\leq \alpha \leq \beta,$ and $u,v\in D^{1,p}(\R^N)$. We establish regularity and the sharp estimates on asymptotic behaviors for any positive solution $(u,v)$. Then, we prove that all positive solutions are radially symmetric and strictly decreasing about some point. Furthermore, we obtain the uniqueness and complete classification of positive solutions. Our results extend the uniqueness results in \cite{LM,QS} for $p=2$ to general cases $1<p<N$.
\end{abstract}

\maketitle {\small {\bf Keywords:} Quasi-linear Schr\"{o}dinger system with $p$-Laplacian;  Radial symmetry;  Sharp estimates on asymptotic behaviors; Classification results; The method of moving planes.\\

{\bf 2020 MSC} Primary: 35J92; Secondary: 35B06, 35B40.}

\section{Introduction}
In this paper, we are mainly concerned with positive $D^{1,p}(\R^{N})$-weak solutions to the following $p$-Laplacian Schr\"{o}dinger system with critical exponent:
\begin{align}\label{eq1.1}
\left\{ \begin{array}{ll} \displaystyle
-\Delta_p u = u^{\alpha}v^{\beta}  \, \ \ \ \ \ \mbox{in}\,\ \ \R^N, \\ \\
-\Delta_p v = u^{\beta}v^{\alpha}  \,\  \ \ \ \ \mbox{in}\,\ \ \R^N, \\ \\
u, v \in D^{1,p}(\R^N),\,\,\,u, v>0 \quad \mbox{in}\,\, \ \ \R^N,
\end{array}
\right.\hspace{1cm}
\end{align}
where the full range of $p$ is $1<p<N$, $N\geq2$, and $\Delta_p u:=div (|\nabla u|^{p-2} \nabla u)$ is the usual $p$-Laplace operator, $\alpha, \beta$ are real parameters satisfying
$$ \alpha+ \beta = p^{\star}-1,\quad 0\leq \alpha \leq \beta, $$
$p^{\star}=\frac{Np}{N-p}$ is called Sobolev critical exponent. The function space
$$ D^{1,p}(\R^N):=\left\{ u\in L^{p^{\star}}(\R^N) \, \bigg| \, \int_{\R^N} |\nabla u|^p \mathrm{d}x <+\infty \right\}$$
is the completion of $C_0^\infty$ with respect to the norm $\| u \|:=\| \nabla u \|_{L^p(\R^N)}$. The quasi-linear Schr\"{o}dinger system \eqref{eq1.1} is $D^{1,p}(\R^{N})$-critical in the sense that both \eqref{eq1.1} and the $D^{1,p}$-norms $\| \nabla u \|_{L^p(\R^N)}$, $\| \nabla v\|_{L^p(\R^N)}$ are invariant under the scalings $u\mapsto u_{\lambda}(\cdot):=\lambda^{\frac{N-p}{p}}u(\lambda\cdot)$ and $v\mapsto v_{\lambda}(\cdot):=\lambda^{\frac{N-p}{p}}v(\lambda\cdot)$. System \eqref{eq1.1} is also invariant under arbitrary rotations and translations.

\begin{defn}[$D^{1,p}(\R^{N})$-weak solution]\label{weak}
A $D^{1,p}(\R^{N})$-weak solution of system \eqref{eq1.1} is a pair of functions $(u,v)\in D^{1,p}(\R^N)\times D^{1,p}(\R^N)$ such that, for any $\psi \in C_0^\infty(\R^N)$,
\begin{align}
\left\{ \begin{array}{ll} \displaystyle
\int_{\R^N} |\nabla u|^{p-2} \langle \nabla u, \nabla\psi\rangle \mathrm{d}x = \int_{\R^N} u^{\alpha}v^{\beta}\psi(x)\mathrm{d}x, \\ \\
\int_{\R^N} |\nabla v|^{p-2} \langle \nabla v, \nabla\psi\rangle \mathrm{d}x = \int_{\R^N} u^{\beta}v^{\alpha}\psi(x)\mathrm{d}x.
\end{array}
\right.\hspace{1cm}
\end{align}
\end{defn}

In the special case $p=2$, system \eqref{eq1.1} reduces to the second order semi-linear Schr\"{o}dinger systems with critical exponents
\begin{align}\label{eq1.1'}
\left\{ \begin{array}{ll} \displaystyle
-\Delta u = u^{\alpha}v^{\beta}  \, \ \ \ \ \ \mbox{in}\,\ \ \R^N, \\ \\
-\Delta v = u^{\beta}v^{\alpha}  \,\  \ \ \ \ \mbox{in}\,\ \ \R^N.
\end{array}
\right.\hspace{1cm}
\end{align}
Li and Ma \cite{LM} proved the following uniqueness result for system \eqref{eq1.1'}.
\begin{thm}
    Assume $n\geq 3$, $p=2$, $1\leq \alpha < \beta,$ and $\alpha+\beta =p^\star-1.$ Then any positive classical solution $(u,v)\in L^{\frac{2N}{N-2}}\times L^{\frac{2N}{N-2}}$ of \eqref{eq1.1} satisfies $u=v$.
\end{thm}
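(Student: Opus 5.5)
The idea is to compare $u$ and $v$ directly through the difference $w:=u-v$. Subtracting the two equations gives
\begin{equation*}
-\Delta w = u^{\alpha}v^{\beta}-u^{\beta}v^{\alpha} = u^{\alpha}v^{\alpha}\left(v^{\beta-\alpha}-u^{\beta-\alpha}\right).
\end{equation*}
Since $\beta>\alpha$, the map $t\mapsto t^{\beta-\alpha}$ is strictly increasing. Hence the right-hand side has the sign opposite to $w$: it is negative where $u>v$ and positive where $u<v$. So $w$ satisfies a linear equation $-\Delta w + c(x)w=0$ with $c(x)\ge 0$, where
\begin{equation*}
c(x):=u^{\alpha}v^{\alpha}\,\frac{u^{\beta-\alpha}-v^{\beta-\alpha}}{u-v}
\end{equation*}
when $u\neq v$, and $c(x):=0$ otherwise.

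The first step is to test the $w$-equation with $w^+=(u-v)^+$. Because $u,v\in L^{2^\star}$ are positive classical solutions of a critical system, standard Brezis--Kato/Moser iteration and the Kelvin transform give $u,v=O(|x|^{2-N})$ and $\nabla u,\nabla v=O(|x|^{1-N})$ at infinity. This decay is what I need to justify the integration by parts on $\R^N$. Concretely, I multiply by $w^+\eta_R^2$, where $\eta_R$ is a cutoff, and let $R\to\infty$; the boundary terms are controlled by $\int_{B_{2R}\setminus B_R}|\nabla w|\,w^+R^{-1}\to0$. This yields
\begin{equation*}
\int_{\R^N}|\nabla w^+|^2\,dx=\int_{\{u>v\}}u^{\alpha}v^{\alpha}\left(v^{\beta-\alpha}-u^{\beta-\alpha}\right)(u-v)\,dx\le 0.
\end{equation*}
Therefore $\nabla w^+\equiv0$, so $w^+$ is constant. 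Since $w^+\in L^{2^\star}$, that constant is $0$, i.e.\ $u\le v$. The symmetric argument with $w^-$ gives $v\le u$, and hence $u=v$.

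A shortcut is also available: once $u\le v$ is known, $-\Delta(v-u)\ge0$ and the strong maximum principle forces either $v\equiv u$ or $v>u$. Even so, the energy argument applied to both signs already closes the proof directly.

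The main obstacle is rigor at infinity and at low regularity. The hypothesis $\alpha\ge1$ makes $t\mapsto t^{\alpha}$ locally Lipschitz, so classical solutions are smooth enough and $c(x)$ is locally bounded. The decay estimates needed to discard the cutoff errors come from the critical integrability $u,v\in L^{2N/(N-2)}$. If those pointwise decay estimates are unavailable, I would instead bound the cutoff errors by H\"older's inequality: by Sobolev, $\nabla w\in L^2$ and $w\in L^{2^\star}$, and $\|\nabla\eta_R\|_{L^N}$ is bounded while the tails vanish. This removes the need for pointwise decay entirely and is the route I would try first.
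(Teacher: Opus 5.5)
Your argument is correct, but it is not the route the paper takes.

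\textbf{What the paper does.} The paper only quotes this result. Its own proof that $u=v$ is for the general system in Theorem \ref{th2}, which contains this case with $p=2$. That proof first establishes sharp asymptotics of $u$, $v$, $|\nabla u|$, $|\nabla v|$. It then proves radial symmetry by moving planes. Only after that does it compare the two radial profiles through the integrated ODE $(r^{N-1}(-u')^{p-1})'=r^{N-1}u^\alpha v^\beta$: first it shows $u(0)=v(0)$, then $u\equiv v$ using the gradient decay.

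\textbf{What you do instead.} You bypass all of this with the sign structure $u^\alpha v^\beta-u^\beta v^\alpha=-c(x)(u-v)$, $c\ge0$. This makes $(u-v)^+$ subharmonic, so $L^{2^*}$-integrability alone forces it to vanish.

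\textbf{What each approach buys.} Your route needs no symmetry and no pointwise decay. It never uses $\alpha\ge1$, so it covers the full range $0\le\alpha\le\beta$ of Quittner--Souplet at once. It also carries over verbatim to the $p$-Laplacian system: test both equations with $(u-v)^+\in D^{1,p}$ and use the strict monotonicity of $\eta\mapsto|\eta|^{p-2}\eta$. That would reduce Theorem \ref{th2} directly to the known scalar classification. The paper's route, in exchange, gives symmetry and monotonicity of the pair without knowing $u=v$ in advance. For this system, however, that information is recovered from the scalar case anyway once $u=v$.

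\textbf{One correction.} Sobolev does not give $\nabla w\in L^2$ from $w\in L^{2^*}$; it goes the other way. The Kelvin-transform decay detour is also unnecessary. Simply absorb the cutoff term: from $\int|\nabla w^+|^2\eta_R^2\le-2\int\eta_R w^+\nabla w^+\cdot\nabla\eta_R$ you get $\int|\nabla w^+|^2\eta_R^2\le4\|w^+\|^2_{L^{2^*}(B_{2R}\setminus B_R)}\|\nabla\eta_R\|^2_{L^N}\to0$. This uses only $u,v\in L^{2^*}$. Equivalently, the mean value inequality for the subharmonic function $w^+$ gives $w^+(x)\le|B_R|^{-1/2^*}\|w^+\|_{L^{2^*}}\to0$.
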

Subsequently, Quittner and Souplet \cite{QS} extend the uniqueness result in \cite{LM} from $1\leq \alpha < \beta$ to general $0\leq \alpha \leq \beta$. For more classification results and Liouville type results on semi-linear systems, c.f. e.g. \cite{CDQ0,CGP,CDQ,DQ1,DQ0,Di,LLW,LYZ,SZ,Soup} and the references therein.

In this paper, we aim to prove the classification of solutions to the $D^{1,p}(\mathbb{R}^{N})$-critical quasi-linear Schr\"{o}dinger system \eqref{eq1.1}, which extends the uniqueness results in \cite{LM,QS} for $p=2$ to general cases $1<p<N$. Comparing with the special case $p=2$, we have to deal with the following essential difficulties for general $p\neq2$:
\begin{itemize}
  \item Nonlinear feature of the $p$-Laplacian $\Delta_p$, linear operations can not be exchanged with $\Delta_p$, comparison principles can not be deduced from maximum principles and are much more difficult to be proved.
  \item Absence of Kelvin type transforms, we need to derive the sharp asymptotic estimates on both $(u,v)$ and $(|\nabla u|,|\nabla v|)$ first.
  \item Lack of the Green integral representation formula, indirect methods via the equivalent integral equation do not work anymore.
  \item The $p$-Laplacian is singular elliptic or degenerate elliptic if $1<p<2$ or $p>2$ respectively, there is no uniform form of the (strong) comparison principles for $p$-Laplacian. Hence, we need to discuss the different two cases $1<p<2$ and $p>2$ separately.
\end{itemize}
Thus it would be an quite interesting and challenging problem to study the sharp asymptotic estimates, radial symmetry, strictly radial monotonicity and classification of solutions to \eqref{eq1.1}.

\smallskip

Assuming $u\equiv v$, the quasi-linear Schr\"{o}dinger system \eqref{eq1.1} becomes the following single $D^{1,p}(\mathbb{R}^{N})$-critical quasi-linear equation:
\begin{align}\label{criticeqution}
-\Delta_p u = u^{p^{\star}-1},\,\,\,\,\,\,u\geq0 \qquad \text{in} \,\, \mathbb{R}^{N}.
\end{align}
Serrin \cite{S} and Serrin and Zou \cite{SJZH02}, among other things, proved regularity results for general quasi-linear equations and sharp lower bound on the asymptotic estimate of super-$p$-harmonic functions (see Lemma \ref{estimate2}). Guedda and Veron \cite{GV} proved the uniqueness of radially symmetrical positive solution $U(x)=U(r)$ with $r=|x|$ to \eqref{criticeqution} satisfying $U(0)=b>0$ and $U'(0)=0$, and hence reduced the classification of solutions to the radial symmetry of solutions. For radial symmetry of positive $D^{1,p}(\R^N)$-weak solutions to $D^{1,p}(\mathbb{R}^{N})$-critical quasi-linear equations via the method of moving planes, c.f. \cite{CDL,CDGL,DLL,LD,DP,LDSMLMSB,DLPFRM,LDMR,LDBS04,LPLDHT,OSV,BS16,VJ16} and the references therein.

\smallskip

In the singular elliptic case $1<p<2$, under the $C^{1}\cap W^{1,p}$-regularity assumptions on the positive solutions and several assumptions on the (local) nonlinear term, the radial symmetry and strictly radial monotonicity of the positive solution was obtained in \cite{DLPFRM,LDMR} by using the moving plane technique. Later, in \cite{LDSMLMSB}, Damascelli, Merch\'{a}n, Montoro and Sciunzi refined the moving plane method used in \cite{DLPFRM,LDMR} and derived the radial symmetry of positive $D^{1,p}(\R^N)$-weak solutions to \eqref{criticeqution} without regularity assumptions on solutions. However, they still required in \cite{LDSMLMSB} that the nonlinear term is locally Lipschitz continuous, i.e., $p^{\star} \geq 2$. Subsequently, by applying scaling arguments and the doubling Lemma (see \cite{PPPQPS}), V\'{e}tois \cite{VJ16} first derived a preliminary estimate on the decay rete of positive $D^{1,p}(\R^N)$-weak solution $u$ to \eqref{criticeqution} (i.e., $u(x)\leq \frac{C}{|x|^{\frac{N-p}{p}}}$ for $|x|$ large), combining this preliminary estimate with scaling arguments, the Harnack-type inequalities (c.f. \cite{LDBS,JSLB}), regularity estimates and the boundedness estimate of the quasi-norm $L^{s,\infty}(\mathbb{R}^{N})$ with $s=\frac{N(p-1)}{N-p}$ (Lemma 2.2 in \cite{VJ16}) implied sharp asymptotic estimates of the positive $D^{1,p}(\R^N)$-weak solution $u$ (i.e., $u\sim\left(1+|x|^{\frac{N-p}{p-1}}\right)^{-1}$) and the sharp upper bound on decay rate of $|\nabla u|$ (i.e., $|\nabla u|\leq \frac{C}{|x|^{\frac{N-1}{p-1}}}$ for $|x|$ large) for $1<p<N$. These sharp asymptotic estimates combined with the results in \cite{DLPFRM,LDMR} successfully extended the radial symmetry results for \eqref{criticeqution} in \cite{LDSMLMSB} to the full singular elliptic range $1<p<2$.

\smallskip

For equation \eqref{criticeqution} in the degenerate elliptic case $p>2$, Sciunzi \cite{BS16} developed a new technique based on scaling arguments, the study of limiting profile at infinity, the uniqueness up to multipliers of $p$-harmonic maps in $\mathbb{R}^{N}\setminus\{0\}$ and the sharp asymptotic estimates on $u$ proved in \cite{VJ16}, and derived the sharp lower bound on decay rate of $|\nabla u|$ (i.e., $|\nabla u|\geq \frac{c}{|x|^{\frac{N-1}{p-1}}}$ for $|x|$ large). Thanks to the sharp asymptotic estimates on $u$ and $|\nabla u|$, Sciunzi \cite{BS16} finally proved the classification of positive $D^{1,p}(\R^N)$-weak solution $u$ to \eqref{criticeqution} via the moving planes technique in conjunction with the weighted Poincar\'{e} type inequality and Hardy's inequality and so on.

\smallskip

In \cite{DLL}, Dai, Li and Liu proved the radial symmetry and sharp asymptotic estimates of positive solutions to the following $D^{1,p}(\R^{N})$-critical quasi-linear nonlocal Schr\"{o}dinger-Hartree equation:
\begin{align}\label{a0}
-\Delta_p u =\left(|x|^{-2p}\ast |u|^{p}\right)|u|^{p-2}u \qquad &\mbox{in} \,\, \mathbb{R}^N.
\end{align}
In order to prove the radial symmetry and strictly radial monotonicity of solutions, one of the key ingredients is to prove the sharp asymptotic estimates for positive $D^{1,p}(\R^N)$-weak solution $u$ and $|\nabla u|$. However, due to the nonlocal feature of the convolution $|x|^{-2p}*u^p$ in the Hartree type nonlinearity in \eqref{a0}, it is impossible to use the scaling arguments and the Doubling Lemma as in \cite{VJ16} to get preliminary estimates on upper bounds of asymptotic behaviors for any positive solutions $u$. Moreover, it is also quite difficult to obtain the boundedness estimate of the quasi-norm $\|u \|_{L^{s,\infty}(\R^N)}$ with $s=\frac{N(p-1)}{N-p}$ as in Lemma 2.2 of \cite{VJ16} and hence derive the sharp upper bound estimates from the preliminary estimates as in \cite{VJ16}. In \cite{DLL}, by showing a better preliminary estimates on upper bounds of asymptotic behaviors through the De Giorgi-Moser-Nash iteration method (i.e., $u(x) \leq\frac{C}{1+|x|^{\frac{N-p}{p}+\sigma}}$ for some $\sigma>0$) and combining Theorem 1.5 in \cite{XCL} and Theorem 2.3 in \cite{SJZH02}, the authors are able to overcome these difficulties and establish regularity and the sharp estimates on both upper and lower bounds for any positive solution $u$ and $|\nabla u|$. Subsequently, Cao, Dai and Li \cite{CDL} derived radial symmetry and sharp asymptotic estimates of nonnegative solutions to more general weighted doubly $D^{1,p}$-critical quasi-linear nonlocal elliptic equations with Hardy potential.

\smallskip

Ciraolo, Figalli and Roncoroni \cite{CFR} proved classification of positive $D^{1,p}(\R^N)$-weak solutions to $D^{1,p}(\R^N)$-critical anisotropic $p$-Laplacian equations of type \eqref{criticeqution} in convex cones. Under some energy growth conditions or suitable control of the solutions at $\infty$ in some cases, Catino, Monticelli and Roncoroni \cite{CMR} proved classification results on general positive solution $u$ to \eqref{criticeqution} (possibly having infinite $D^{1,p}(\R^N)$-energy) for $1<p<N$. For $\frac{N+1}{3}\leq p<N$, Ou \cite{Ou} classified general positive solution $u$ to \eqref{criticeqution} (possibly having infinite energy). In \cite{CDGL}, the authors proved Liouville theorems and classification results for (anisotropic) $p$-Laplacian equations in convex cones without finite-energy condition. Their results are the subcritical counterpart of the classification result for the critical case in \cite{CFR}, and extend the Liouville type theorems in $\mathbb{R}^{N}$ in e.g. \cite{SJZH02} to general convex cones $\mathcal{C}$. Their results also extend the classification result of \cite{Ou} in $\mathbb{R}^{N}$ to general convex cones $\mathcal{C}$, and remove the finite-energy assumption in \cite{CFR} for the standard non-anisotropic $p$-Laplacian. For more literatures on quantitative and qualitative properties of solutions for quasi-linear equations involving $p$-Laplacians with local nonlinearities, please c.f. \cite{ACP,A,B,CDPSYS,CMR,CV,DDGL,LD,DFSV,DP,LDBS04,LDBS,DDH,DYZ,ED83,Di,FSV,GL,LPLDHT,Lieberman,OSV,BSR14,JSLB,SJZH02,Tei,PT,JLV,ZL} and the references therein.

\medskip

We first establish the following regularity result and sharp asymptotic estimates on $(u,v)$ and $(|\nabla u|,|\nabla v|)$, due to the absence of Kelvin type transforms.
\begin{thm}
[Regularity and sharp asymptotic estimates]\label{th2.1-}
Assume $1<p<N$, and let $(u,v)$ be a pair of positive $D^{1,p}(\R^{N})$-weak solutions to the $D^{1,p}(\mathbb{R}^{N})$-critical quasi-linear Schr\"{o}dinger system \eqref{eq1.1}. Then $u, v \in C_{loc}^{1,\theta} (\R^N) \cap L^\infty(\R^N)$. Moreover, there exist $c_0, C_0,c'_0, C'_0, R_0 > 0$ such that
\begin{equation}\label{eq0806}
  \frac{c_0}{1+|x|^\frac{N-p}{p-1}} \leq u(x) , v(x) \leq \frac{C_0}{1+|x|^\frac{N-p}{p-1}} \,\,\,\,\,\,\,\,\mbox{in}\,\,\ \R^N,
\end{equation}
\begin{align}\label{eq0806+}
   \frac{c'_0}{|x|^\frac{N-1}{p-1}} \leq |\nabla u(x)|, |\nabla v(x)|\leq \frac{C'_0}{|x|^\frac{N-1}{p-1}} \,\,\,\,\,\,\,\,\mbox{in}\,\,\ \R^N\setminus B_{R_0}(0).
\end{align}
\end{thm}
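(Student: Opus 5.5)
We treat the system as two scalar quasi-linear equations with potential-type right-hand sides, and follow the route of \cite{VJ16,BS16,DLL}. Write $-\Delta_p u = V_1(x) u^{p-1}$ with $V_1:=u^{\alpha-p+1}v^{\beta}$, and similarly $-\Delta_p v = V_2 v^{p-1}$ with $V_2:=u^\beta v^{\alpha-p+1}$. This is awkward when $\alpha<p-1$, so instead we use the bound $u^\alpha v^\beta\le (u+v)^{p^\star-1}$. Setting $w:=u+v$, we get
\[
-\Delta_p u\le C\, w^{p^\star-p}\, w^{p-1}, \qquad w^{p^\star-p}\in L^{N/p}(\R^N),
\]
because $w\in L^{p^\star}$.

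\textbf{Regularity.} We run a Moser iteration simultaneously on $u$ and $v$, testing with $u\min(u,L)^{p(\kappa-1)}$ and the analogue for $v$. The small-tail property of $w^{p^\star-p}$ in $L^{N/p}$ on small balls, or outside a large ball, absorbs the critical term exactly as in the Brezis--Kato argument for the scalar case. This gives $u,v\in L^q$ for all $q\in[p^\star,\infty)$, then $L^\infty_{loc}$ and $L^\infty(\R^N)$ by Serrin's local estimates. Since the right-hand sides are then bounded, the $C^{1,\theta}_{loc}$ regularity follows from DiBenedetto/Tolksdorf.

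\textbf{Upper bounds.} First we obtain a preliminary decay estimate. Rescale $u_R(y):=R^{\frac{N-p}{p}}u(Ry)$ on the annulus $\{1/2<|y|<2\}$, where $\|u_R\|_{L^{p^\star}}+\|v_R\|_{L^{p^\star}}\to0$ as $R\to\infty$. The local $L^\infty$--$L^{p^\star}$ estimate then yields $u,v\le C|x|^{-\frac{N-p}{p}}$, with a little $o(1)$. Following \cite{DLL}, we improve this to $u+v\le C|x|^{-\frac{N-p}{p}-\sigma}$ for some $\sigma>0$ via De Giorgi--Moser--Nash iteration on exterior domains, or alternatively via a comparison argument. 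With this decay the potential satisfies $w^{p^\star-p}\le C|x|^{-p-\sigma'}$ for some $\sigma'>0$. Now $u$ is a $p$-subsolution of $-\Delta_p z = c|x|^{-p-\sigma'} z^{p-1}$ outside a large ball. We compare $u$ with a barrier of the form $C(|x|^{-\frac{N-p}{p-1}} - \epsilon|x|^{-\frac{N-p}{p-1}-\delta})$, or invoke Theorem 1.5 of \cite{XCL} or Theorem 2.3 of \cite{SJZH02} as in \cite{DLL}. This gives $u,v\le C|x|^{-\frac{N-p}{p-1}}$. The weak comparison principle on exterior domains for $p$-Laplace subsolutions is fine here because $u$ tends to zero at infinity and lies in $D^{1,p}$.

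\textbf{Lower bounds.} Since $u$ and $v$ are positive and $p$-superharmonic, Lemma \ref{estimate2} gives $u,v\ge c|x|^{-\frac{N-p}{p-1}}$. Together with positivity and continuity on compact sets, this proves \eqref{eq0806}.

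\textbf{Gradient bounds.} For the upper bound we rescale $u_R(y):=R^{\frac{N-p}{p-1}}u(Ry)$, which satisfies $-\Delta_p u_R = R^{\gamma} u_R^\alpha v_R^\beta$ on the annulus, with $u_R,v_R$ uniformly bounded. The factor $R^\gamma$ times the decay is $O(R^{p}\cdot R^{-(p^\star-1)\frac{N-p}{p-1}+\frac{N-p}{p-1}(p-1)})$, which is bounded because $p^\star-1>p-1$ and the exponent works out to be nonpositive. Uniform $C^{1,\theta}$ estimates then give $|\nabla u_R|\le C$ on $|y|=1$, i.e. $|\nabla u|\le C|x|^{-\frac{N-1}{p-1}}$.

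The lower bound on the gradient is the main obstacle. We follow \cite{BS16} and argue by contradiction. Suppose there are points $x_k$ with $|x_k|=R_k\to\infty$ and $R_k^{\frac{N-1}{p-1}}|\nabla u(x_k)|\to0$. The rescaled functions $u_{R_k}$ converge in $C^{1,\theta}_{loc}(\R^N\setminus\{0\})$ to a positive $p$-harmonic function $u_\infty$ on $\R^N\setminus\{0\}$, since the right-hand side vanishes in the limit. This $u_\infty$ is bounded between $c|y|^{-\frac{N-p}{p-1}}$ and $C|y|^{-\frac{N-p}{p-1}}$. By the Kichenassamy--V\'eron classification of isolated singularities, $u_\infty=\lambda|y|^{-\frac{N-p}{p-1}}$ for some $\lambda>0$. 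Its gradient never vanishes on $|y|=1$, which contradicts the assumption on $x_k$. The same argument applies to $v$, which gives \eqref{eq0806+}.
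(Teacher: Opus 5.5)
Your regularity, preliminary-decay, lower-bound and gradient steps follow the paper's route. The passage to the sharp upper bound $u,v\le C|x|^{-\frac{N-p}{p-1}}$, however, has a genuine gap, and the gradient upper bound and the blow-down argument both depend on it.

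From $u^\alpha v^\beta\le w^{p^\star-p}w^{p-1}$ you conclude that $u$ is a subsolution of $-\Delta_p z=c|x|^{-p-\sigma'}z^{p-1}$. But the factor on the right is $w^{p-1}$, not $u^{p-1}$. To put the $u$-equation in the form required by Lemma \ref{estimate1}, the potential must be $V=u^{\alpha-p+1}v^\beta$. When $\alpha<p-1$ this contains a negative power of $u$ and is not controlled by the decay of $w$. This case always lies in the admissible range: for $\alpha=0$, $-\Delta_p u=v^{p^\star-1}$ does not involve $u$ at all. You cannot work with $w$ itself either, since $-\Delta_p(u+v)\neq-\Delta_p u-\Delta_p v$ for $p\neq 2$.

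The barrier alternative does not close the gap as written. For $a=\frac{N-p}{p-1}$, $-\Delta_p$ of $C(|x|^{-a}-\epsilon|x|^{-a-\delta})$ behaves like $|x|^{-N-\delta}$. The only explicit bound you have is $-\Delta_p u\le C|x|^{-(p^\star-1)(\frac{N-p}{p}+\sigma)}=C|x|^{-N+\frac{N-p}{p}-(p^\star-1)\sigma}$, which decays more slowly than $|x|^{-N}$ for small $\sigma$. Comparing through the potential form instead reintroduces the same unjustified subsolution claim.

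The paper handles this with a case split.
\begin{itemize}
\item If $\alpha\ge p-1$, then $u^{\alpha-p+1}v^\beta\le u^{p^\star-p}+v^{p^\star-p}$, and your argument goes through.
\item If $\alpha<p-1$, it first bootstraps with the radial supersolutions $K|x|^{-\hat\delta}$, for which $-\Delta_p|x|^{-\hat\delta}=c|x|^{-(p-1)\hat\delta-p}$ when $\hat\delta<\frac{N-p}{p-1}$. Using the explicit bound on $-\Delta_p u$, this raises the decay exponent step by step up to $\frac{N-p}{p-1}-\theta$. It then uses the lower bound $u\ge c|x|^{-\frac{N-p}{p-1}}$ from Lemma \ref{estimate2} to get $u^{\alpha-p+1}v^\beta\le C|x|^{-p-\frac{p}{p-1}+\theta\beta}$, and only then invokes Lemma \ref{estimate1}.
\end{itemize}
After this bootstrap one has $-\Delta_p u\le C|x|^{-N-\epsilon}$, so your barrier would also work at that point.

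A shorter repair, close to your $w$-idea, is to use $m:=\max\{u,v\}$ instead of $u+v$. Both $u$ and $v$ are weak subsolutions of $-\Delta_p z\le m^{p^\star-1}$. The maximum of two weak subsolutions with a common right-hand side is again one, which you can check by testing with $\phi\min\{1,(u-v)^+/\epsilon\}$ and its complement and using monotonicity of $|\xi|^{p-2}\xi$. Hence $-\Delta_p m\le m^{p^\star-p}m^{p-1}$, with $m\in D^{1,p}(\R^N)$ and $m^{p^\star-p}\le C|x|^{-p-(p^\star-p)\sigma}$ by the preliminary decay. Lemma \ref{estimate1} then applies to $m$ directly, with no case distinction.
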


\medskip

Thanks to the sharp asymptotic estimates on $(u,v)$ and $(|\nabla u|,|\nabla v|)$ in Theorem \ref{th2.1-}, inspired by \cite{DLL,LDSMLMSB,BS16,VJ16} etc., we are ready to prove the following theorem on radial symmetry and strictly radial monotonicity of positive $D^{1,p}(\mathbb{R}^{N})$-weak solutions to \eqref{eq1.1} via the method of moving planes.
\begin{thm}\label{th2}
Assume $1<p<N$ and let $ (u,v)$ be a pair of positive $D^{1,p}(\mathbb{R}^{N})$-weak solutions of the $D^{1,p}(\mathbb{R}^{N})$-critical quasi-linear Schr\"{o}dinger system \eqref{eq1.1}. Then, $(u,v)$ are radially symmetric and strictly decreasing about some point $x_0\in\R^N$. Furthermore, the positive solutions $(u,v)$ must assume the form
\[u(x)=v(x)=\left[ \frac{\lambda^{\frac{1}{p-1}}\left(N^{\frac{1}{p}}\left(\frac{N-p}{p-1}\right)^{\frac{p-1}{p}}\right)}{\lambda^{\frac{p}{p-1}}+|x-x_0|^{\frac{p}{p-1}}} \right]^{\frac{N-p}{p}}.\]
\end{thm}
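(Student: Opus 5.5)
The plan is to run the method of moving planes on the pair $(u,v)$ simultaneously, taking the sharp asymptotics of Theorem \ref{th2.1-} as the main input. Once radial symmetry about a common point is known, the last step is to show $u\equiv v$. Then \eqref{eq1.1} reduces to \eqref{criticeqution}, and the Guedda--Veron uniqueness of radial solutions gives the explicit form.

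\textbf{Moving planes.} Fix a direction, say $x_1$. For $\lambda\in\R$ set $\Sigma_\lambda=\{x_1<\lambda\}$, let $x^\lambda$ be the reflection of $x$ in $\{x_1=\lambda\}$, and put $u_\lambda(x)=u(x^\lambda)$, $v_\lambda(x)=v(x^\lambda)$. The test functions are $(u-u_\lambda)^+\chi_{\Sigma_\lambda}$ and $(v-v_\lambda)^+\chi_{\Sigma_\lambda}$; these are admissible because $u,v\in C^{1,\theta}_{loc}\cap L^\infty$ and have the decay \eqref{eq0806}. Testing the difference of the equations for $u$ and $u_\lambda$ gives
\[
\int_{\Sigma_\lambda}\bigl(|\nabla u|^{p-2}\nabla u-|\nabla u_\lambda|^{p-2}\nabla u_\lambda\bigr)\cdot\nabla (u-u_\lambda)^+
=\int_{\Sigma_\lambda}\bigl(u^\alpha v^\beta-u_\lambda^\alpha v_\lambda^\beta\bigr)(u-u_\lambda)^+ .
\]
On the right I split $u^\alpha v^\beta-u_\lambda^\alpha v_\lambda^\beta=(u^\alpha-u_\lambda^\alpha)v^\beta+u_\lambda^\alpha(v^\beta-v_\lambda^\beta)$. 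By the mean value theorem this yields terms bounded by $\max(u,u_\lambda)^{p^\star-2}$ times $(u-u_\lambda)^+$ and $(v-v_\lambda)^+$, which couples the $u$- and $v$-inequalities.

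\textbf{Starting and continuing the plane.} The left side is bounded below in the two regimes as follows. For $p\ge2$, I use the weighted form $\int(|\nabla u|+|\nabla u_\lambda|)^{p-2}|\nabla(u-u_\lambda)^+|^2$ together with a weighted Poincar\'e/Hardy inequality, as in Sciunzi \cite{BS16}. The lower bound on $|\nabla u|$ from \eqref{eq0806+} makes the weight comparable to $|x|^{-(N-1)(p-2)/(p-1)}$ near infinity. For $1<p<2$, I follow \cite{VJ16,LDSMLMSB} instead. In either case the decay $\max(u,u_\lambda)^{p^\star-2}\lesssim |x|^{-(N-p)(p^\star-2)/(p-1)}$ is fast enough to absorb the coupled right-hand side for $\lambda\ll0$. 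Adding the $u$ and $v$ inequalities then gives $u\le u_\lambda$ and $v\le v_\lambda$ in $\Sigma_\lambda$. The plane is moved up to $\lambda_0=\sup\{\lambda: u\le u_\mu,\ v\le v_\mu \text{ in }\Sigma_\mu\ \forall \mu\le\lambda\}$. At $\lambda_0$ I argue by contradiction using the strong comparison principle for the $p$-Laplacian (Damascelli--Sciunzi) on the set where the gradients are nonzero. Smallness of the measure of the critical set $Z=\{\nabla u=0\}$ and a compactness argument then let me push the plane slightly beyond $\lambda_0$. Doing this from both sides in every direction gives symmetry about a common $x_0$ in each direction. The sign of $u-u_\lambda$ on the $\Sigma$ side gives strict monotonicity.

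\textbf{Main obstacle.} I expect the hardest part to be the coupling combined with the possible non-Lipschitz nonlinearity when $\alpha<1$ (including $\alpha=0$). The term $u^\alpha-u_\lambda^\alpha$ is not controlled by $u-u_\lambda$ where $u$ is small. My first attempt is to use the sharp two-sided bounds \eqref{eq0806}: since $u\ge c_0/(1+|x|^{(N-p)/(p-1)})$, the mean value factor $\xi^{\alpha-1}$ satisfies $\xi^{\alpha-1}\le u_\lambda^{\alpha-1}$ when $u>u_\lambda$. With the comparable decay of $u_\lambda$ on the relevant set, this still yields an $O(|x|^{-p})$-type weight with a small constant at infinity. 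On compact sets positivity saves us.

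\textbf{Proving $u\equiv v$.} With $u,v$ radial about $x_0$, I compare $u$ and $v$ directly, following the idea of Quittner--Souplet \cite{QS}. Suppose $w=u-v$ has $\{u>v\}\neq\emptyset$. Testing the difference of the two equations with $(u-v)^+$ gives a right-hand side $\int (u^\alpha v^\beta-u^\beta v^\alpha)(u-v)^+$. Since $\alpha\le\beta$, on $\{u>v\}$ we have $u^\alpha v^\beta-u^\beta v^\alpha=u^\alpha v^\alpha(v^{\beta-\alpha}-u^{\beta-\alpha})\le0$. Meanwhile the left-hand side is $\ge0$ by monotonicity of $\xi\mapsto|\xi|^{p-2}\xi$. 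Hence both vanish, so $\nabla(u-v)^+=0$, and the decay forces $(u-v)^+\equiv0$. By symmetry $u\equiv v$ when $\alpha<\beta$, and trivially when $\alpha=\beta$. Then $u$ solves \eqref{criticeqution}, and the radial classification of Guedda--Veron gives the displayed formula.
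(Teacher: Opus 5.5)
Your moving-plane part is essentially the paper's Sections 4.1--4.2. The ingredients match: the weighted Hardy--Sobolev inequality at infinity with weight $|x|^{s}$, $s=-\frac{N-1}{p-1}(p-2)$; a weighted or Lemma \ref{PTI}-type Poincar\'e inequality near the plane; strong comparison off the critical set; and a Lemma \ref{tip}-type bound for the non-Lipschitz powers. The genuine difference is how you obtain $u\equiv v$.

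The paper first proves radial symmetry and then works with the integrated ODE \eqref{eq5.1}, \eqref{eq5.1-}. It shows $u(0)=v(0)$, then examines the first radius where $u$ and $v$ separate, and contradicts the gradient decay \eqref{eq0806+}.

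You instead test the difference of the two equations with $(u-v)^+$. This is admissible because $(u-v)^+\in D^{1,p}(\R^N)$ and $u^\alpha v^\beta\le\max(u,v)^{p^{\star}-1}\in L^{(p^{\star})'}$. On $\{u>v\}$ the difference $u^\alpha v^\beta-u^\beta v^\alpha$ is $\le 0$. Together with the strict monotonicity in Lemma \ref{BSICI}, this forces $\nabla(u-v)^+=0$, hence $(u-v)^+=0$; symmetrically $(v-u)^+=0$. The argument is correct. For $\alpha=\beta$ the same computation applies with the right-hand side identically zero, so no separate ``trivial'' case is needed.

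This step is stronger than the way you use it. It needs neither radial symmetry, nor regularity, nor the asymptotics of Theorem \ref{th2.1-}, only finite energy, so you could run it first. Then $u$ is a positive $D^{1,p}$ solution of \eqref{criticeqution}. The scalar classification of \cite{LDSMLMSB,VJ16,BS16}, which the paper itself invokes in its final step, directly gives radial symmetry, strict monotonicity and the explicit profile. The coupled moving-plane argument becomes unnecessary, including the $\alpha<1$ difficulty you single out as the main obstacle.

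What the paper's route buys is a symmetry proof for the coupled system that does not depend on the components coinciding. It would therefore adapt to cooperative systems lacking the $\alpha\le\beta$ comparison structure. The price is the full moving-plane machinery plus an ODE argument that must control the sign of $u-v$ just beyond the first separation radius. Your global energy argument avoids both.
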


\smallskip

For more literatures on the classification results and other quantitative and qualitative properties of solutions for various PDE and IE problems via the methods of moving planes, please refer to \cite{CGS,CL,CL1,CLL,CLO,CDQ,DLS,DQ,LD,DP,LDSMLMSB,DLPFRM,LDMR,LDBS04,GNN,LPLDHT,Lin,BS16,S,VJ16,WX} and the references therein.

\smallskip

The rest of our paper is organized as follows. In Section \ref{sc2}, we will give some preliminaries on important inequalities, (strong) comparison principles, strong maximum principle and H\"{o}pf's Lemma, related regularity results and properties of the critical set etc.. Section \ref{sc3} is devoted to the proof of the key ingredients: Theorem \ref{th2.1-}, i.e., the regularity and the sharp asymptotic estimates for positive weak solutions $(u,v)$ and their gradients $(|\nabla u|,|\nabla v|)$. Finally, we will carry out our proof of Theorem \ref{th2} in Section \ref{sc4}.

\smallskip

In the following, we will use the notation $B_{R}$ to denote the ball $B_{R}(0)$ centered at $0$ with radius $R$, and use $C$ to denote a general positive constant that may depend on $N$, $p$ and $(u,v)$, and whose value may differ from line to line.

\section{Preliminaries}\label{sc2}
In this section, we will give some useful tools in our proofs of Theorems  \ref{th2.1-} and \ref{th2}, including some important inequalities, (strong) comparison principles, strong maximum principle and H\"{o}pf's Lemma, related regularity results and properties of the critical set etc. For clarity of presentation, we will split this Section into two sub-sections.

\subsection{Some key inequalities}\label{sc2.1}

\medskip

In order to apply the moving planes method to $p$-Laplace equations, we will frequently use the following basic point-wise estimate (Lemma \ref{BSICI}), the weighted Hardy-Sobolev inequality (Lemma \ref{HDSI}) and the strong comparison principles (Lemmas \ref{th3.1}-\ref{th3.2}).
\begin{lem}[Lemma 2.1 in \cite{LD}] \label{BSICI}
For any $ p>1 $, the following elementary point-wise inequality
\begin{align}\label{eq0802}
[|\eta|^{p-2}\eta-|\eta^\prime|^{p-2}\eta^\prime][\eta-\eta^\prime] &\geq C' (|\eta|+|\eta^\prime|)^{p-2}|\eta-\eta^\prime|^2; \\
\left| |\eta|^{p-2}\eta-|\eta^\prime|^{p-2}\eta^\prime \right| &\leq C'' (|\eta|-|\eta^\prime|)^{p-1},\,\,\,\,\,\,\,\mbox{if}\,\, 1<p\leq 2 \nonumber
\end{align}
hold for all $\eta,\eta^\prime \in \R^N$ with $|\eta|+|\eta^\prime|>0$, where $p>1$ and $C', C''>0$ depends only on $N$ and $p$.
\end{lem}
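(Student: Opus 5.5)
This is an elementary vector inequality, and I would prove it directly from the structure of the map $a(\xi):=|\xi|^{p-2}\xi$ rather than cite anything else. The second estimate is stated with $(|\eta|-|\eta'|)^{p-1}$, which cannot be right: the left-hand side need not vanish when $|\eta|=|\eta'|$, for instance when $\eta'=-\eta$. I will therefore prove the standard form $\left||\eta|^{p-2}\eta-|\eta'|^{p-2}\eta'\right|\le C''|\eta-\eta'|^{p-1}$ for $1<p\le 2$. This is the (p-1)-Hölder continuity of $a$, and it is presumably what is intended.

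\textbf{Derivative of $a$ along segments.} For $\xi\neq0$ the map $a$ is smooth, with
\[
Da(\xi)=|\xi|^{p-2}\Big(I+(p-2)\tfrac{\xi\otimes\xi}{|\xi|^2}\Big).
\]
This symmetric matrix has eigenvalues $|\xi|^{p-2}$ (on $\xi^\perp$) and $(p-1)|\xi|^{p-2}$ (along $\xi$). Hence
\[
\min(1,p-1)|\xi|^{p-2}|h|^2\le \langle Da(\xi)h,h\rangle,\qquad \|Da(\xi)\|\le\max(1,p-1)|\xi|^{p-2}.
\]
Set $\xi_t=\eta'+t(\eta-\eta')$. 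The segment meets $0$ in at most one point, and $|\xi_t|^{p-2}$ is integrable in $t$ because $p-2>-1$. So the fundamental theorem of calculus gives
\[
a(\eta)-a(\eta')=\int_0^1 Da(\xi_t)(\eta-\eta')\,dt.
\]

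\textbf{First inequality.} From the formula above,
\[
\langle a(\eta)-a(\eta'),\eta-\eta'\rangle\ge\min(1,p-1)|\eta-\eta'|^2\int_0^1|\xi_t|^{p-2}dt ,
\]
and it remains to bound the integral below by $c(|\eta|+|\eta'|)^{p-2}$.

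If $p\le2$, we have $|\xi_t|\le|\eta|+|\eta'|$, so $|\xi_t|^{p-2}\ge(|\eta|+|\eta'|)^{p-2}$ pointwise and the bound is immediate.

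If $p>2$, assume without loss of generality $|\eta|\ge|\eta'|$, so $|\eta|\ge\frac12(|\eta|+|\eta'|)$. For $t\in[3/4,1]$,
\[
|\xi_t-\eta|=(1-t)|\eta-\eta'|\le\tfrac14(|\eta|+|\eta'|),
\]
and therefore $|\xi_t|\ge\frac14(|\eta|+|\eta'|)$. Integrating only over this portion gives $\int_0^1|\xi_t|^{p-2}dt\ge\frac14\,4^{2-p}(|\eta|+|\eta'|)^{p-2}$.

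\textbf{Second inequality ($1<p\le2$).} Assume $|\eta|\ge|\eta'|$, and split into two cases.

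If $|\eta-\eta'|\ge|\eta|/2$, bound crudely:
\[
|a(\eta)-a(\eta')|\le|\eta|^{p-1}+|\eta'|^{p-1}\le2|\eta|^{p-1}\le 2^{p}|\eta-\eta'|^{p-1}.
\]

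If $|\eta-\eta'|<|\eta|/2$, then $|\xi_t|\ge|\eta|-|\eta-\eta'|>|\eta|/2$ along the whole segment. Using $\|Da(\xi)\|\le|\xi|^{p-2}$, which holds since $p\le2$,
\[
|a(\eta)-a(\eta')|\le(|\eta|/2)^{p-2}|\eta-\eta'| .
\]
Since $|\eta-\eta'|^{2-p}\le(|\eta|/2)^{2-p}$, the right-hand side is at most $|\eta-\eta'|^{p-1}$.

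Combining the two cases, $C''=2^p$ works. The only delicate point in the whole argument is the lower bound on $\int_0^1|\xi_t|^{p-2}dt$ when $p>2$, where the segment may pass near or through the origin. This is handled by integrating only over the portion of the segment near the larger endpoint.
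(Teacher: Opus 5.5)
Your argument is correct. Your reading of the second estimate is also right: as printed, $(|\eta|-|\eta'|)^{p-1}$ fails for $\eta'=-\eta$ (the left side is $2|\eta|^{p-1}$), and it is not even meaningful when $|\eta|<|\eta'|$. The intended bound is therefore $C''|\eta-\eta'|^{p-1}$; in any case, the paper only ever uses the first inequality.

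The paper gives no proof and simply quotes Lemma 2.1 of \cite{LD}. Your proof of the monotonicity inequality is the standard one. You integrate $Da$ along the segment and bound $\int_0^1|\xi_t|^{p-2}\,dt$ from below by a multiple of $(|\eta|+|\eta'|)^{p-2}$. The only nontrivial case is $p>2$, which you handle correctly by restricting to $t\in[3/4,1]$.

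For the H\"older bound, the usual route is different. It first proves the weighted Lipschitz estimate $||\eta|^{p-2}\eta-|\eta'|^{p-2}\eta'|\le C(|\eta|+|\eta'|)^{p-2}|\eta-\eta'|$ for all $p>1$. For $p<2$ this requires the reverse bound $\int_0^1|\xi_t|^{p-2}\,dt\le C(|\eta|+|\eta'|)^{p-2}$, even when the segment passes through or near the origin. One then concludes with $(|\eta|+|\eta'|)^{p-2}\le|\eta-\eta'|^{p-2}$.

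Your case split avoids that singular-integral estimate and is shorter:
\begin{itemize}
\item when $|\eta-\eta'|\ge|\eta|/2$, the crude bound $|\eta|^{p-1}+|\eta'|^{p-1}\le 2^p|\eta-\eta'|^{p-1}$;
\item otherwise, the mean value estimate along a segment with $|\xi_t|>|\eta|/2$.
\end{itemize}
What you give up is the weighted Lipschitz estimate as a by-product, which the statement here does not require. Your constants depend only on $p$, slightly better than claimed.
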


\begin{lem}[Weighted Hardy-Sobolev inequality, c.f. \cite{LDMR}]\label{HDSI}
Let $ u \in C_0^1(\R^N)$ and $q\geq 1$. Then for any $ s > q-N $ and $ R\geq 0 $ we have
$$ \int_{\R^N\setminus B_R(0)} |u|^q |x|^{s-q} \mathrm{d}x \leq \left( \frac{q}{N-q+s} \right)^q \int_{\R^N\setminus B_R(0)} |\nabla u|^q |x|^s \mathrm{d}x.$$
%The same inequality holds if we substitute u with its positive (negative) part.
\end{lem}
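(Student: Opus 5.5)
We prove the inequality for radial weights by an integration by parts (divergence theorem) argument applied to the vector field $|u|^q |x|^{s-q}x$. Since $u\in C_0^1(\R^N)$ and $q\geq1$, the function $|u|^q$ is $C^1$ with $\nabla |u|^q = q|u|^{q-2}u\nabla u$ (for $q=1$ one first replaces $|u|$ by $(u^2+\varepsilon^2)^{1/2}-\varepsilon$ and lets $\varepsilon\to0$). A direct computation gives
\begin{equation*}
\mathrm{div}\left(|x|^{s-q}x\right)=(N+s-q)|x|^{s-q}\qquad\text{for } x\neq0,
\end{equation*}
and the constant $N-q+s$ is positive by the hypothesis $s>q-N$.

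Multiply this identity by $|u|^q$ and integrate over $\R^N\setminus B_R(0)$. The divergence theorem, with compact support at infinity, gives
\begin{equation*}
(N-q+s)\int_{\R^N\setminus B_R}|u|^q|x|^{s-q}\,\mathrm{d}x=-\int_{\R^N\setminus B_R} q|u|^{q-2}u\,\langle\nabla u, x\rangle |x|^{s-q}\,\mathrm{d}x-\int_{\partial B_R}|u|^q R^{s-q+1}\,\mathrm{d}\sigma .
\end{equation*}
Here the outward normal of $\R^N\setminus B_R$ on $\partial B_R$ is $-x/R$, so the boundary term carries a minus sign and is $\le0$; we simply drop it. When $R=0$ we integrate instead over $\R^N\setminus B_\varepsilon$ and let $\varepsilon\to0$. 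The boundary term is then $O(\varepsilon^{N+s-q})\to0$ precisely because $N+s-q>0$. The same condition makes $|x|^{s-q}$ locally integrable, and the assumption also gives $s>1-N$ when $q\ge1$, so $|x|^{s-q+1}$ is integrable near $0$ as well. Hence all integrals are finite.

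It follows that
\begin{equation*}
(N-q+s)\int |u|^q|x|^{s-q}\le q\int |u|^{q-1}|\nabla u|\,|x|^{s-q+1}.
\end{equation*}
We now apply Hölder's inequality with exponents $\frac{q}{q-1}$ and $q$, splitting the weight as $|x|^{s-q+1}=|x|^{(s-q)\frac{q-1}{q}}\cdot|x|^{s/q}$. This gives
\begin{equation*}
\int |u|^{q-1}|\nabla u|\,|x|^{s-q+1}\le\Big(\int|u|^q|x|^{s-q}\Big)^{\frac{q-1}{q}}\Big(\int|\nabla u|^q|x|^{s}\Big)^{\frac1q}.
\end{equation*}
If the left-hand integral $\int|u|^q|x|^{s-q}$ is zero there is nothing to prove. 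Otherwise it is finite, as shown above, so we may divide by its $\frac{q-1}{q}$ power and raise both sides to the power $q$. This yields the claimed constant $\left(\frac{q}{N-q+s}\right)^q$.

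The only delicate points are the finiteness of the integrals near the origin and the sign of the boundary term, and both are settled by the condition $s>q-N$ as explained above.
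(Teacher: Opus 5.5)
Your argument is correct: integrating $|u|^q\,\mathrm{div}\bigl(|x|^{s-q}x\bigr)=(N+s-q)|u|^q|x|^{s-q}$ by parts over $\R^N\setminus B_R(0)$, dropping the boundary term on $\partial B_R(0)$ (it is nonpositive), and closing with H\"older's inequality gives exactly the constant $\left(\frac{q}{N-q+s}\right)^q$. The hypothesis $s>q-N$ supplies both the positivity of $N-q+s$ and, when $R=0$, the local integrability of $|x|^{s-q}$ at the origin. The paper does not prove this lemma itself but only quotes it from \cite{LDMR}, and your divergence-theorem-plus-H\"older route is the standard proof of this Hardy-type inequality.
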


\begin{lem}[Theorem 1.4 in \cite{LDBS}, c.f. also Theorem 2.1 in \cite{OSV}]\label{th3.1}
Let $\frac{2N+2}{N+2}<p<2$ or $p>2$ and $u,v \in C^1(\overline{\Omega})$, where $\Omega$ is a bounded smooth domain of $\R^N$. Suppose that either $u$ or $v$ is a weak solution of
\begin{align*}
\left\{ \begin{array}{ll}
-\Delta_p u =f(x,u) \,\,\,\,\,\, & \mbox{in}\,\,\Omega, \\
u>0 \,\,\,\,\,\,& \mbox{in}\,\,\Omega, \\
u=0 \,\,\,\,\,\,& \mbox{on}\,\,\partial\Omega \\
\end{array}
\right.\hspace{1cm}
\end{align*}
with $f:\overline{\Omega}\times [0,\infty)\to \R$ is a continuous function which is positive and of class $C^1$ in $\Omega\times(0,\infty)$. Assume that
$$ -\Delta_p u + \Lambda u \leq  -\Delta_p v + \Lambda v \qquad \,\,\,\mbox{and} \quad \,u\leq v\,\quad\mbox{in}\,\,\Omega,$$
where $\Lambda\in\R$. Then $u\equiv v$ in $\Omega$ unless $u<v$ in $\Omega$.
\end{lem}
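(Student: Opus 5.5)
The plan is to reduce the statement to a weak Harnack inequality for the difference $w:=v-u\geq 0$. I will then show that the set $\{w=0\}$ is both open and closed in the connected domain $\Omega$. Throughout, assume without loss of generality that $u$ is the weak solution of the Dirichlet problem; the case where it is $v$ is symmetric.

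\textbf{Step 1: linearization.} Set $\eta_t:=t\nabla v+(1-t)\nabla u$ for $t\in[0,1]$, and
$$a_{ij}(x):=\int_0^1 \frac{\partial}{\partial \eta_j}\big(|\eta|^{p-2}\eta_i\big)\Big|_{\eta=\eta_t}\,\mathrm{d}t .$$
Then $|\nabla v|^{p-2}\nabla v-|\nabla u|^{p-2}\nabla u=A(x)\nabla w$. The hypothesis gives, in the weak sense,
$$-\mathrm{div}(A\nabla w)+\Lambda w\geq 0,\qquad w\geq 0 .$$
The matrix satisfies
$$c\,(|\nabla u|+|\nabla v|)^{p-2}|\xi|^2\leq \langle A\xi,\xi\rangle \leq C\,(|\nabla u|+|\nabla v|)^{p-2}|\xi|^2,$$
which is the same mechanism as in Lemma \ref{BSICI}. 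Away from $Z:=\{\nabla u=\nabla v=0\}$, which is contained in the critical set $Z_u=\{\nabla u=0\}$, $A$ is locally uniformly elliptic with bounded coefficients (since $u,v\in C^1(\overline\Omega)$). Hence the classical strong maximum principle applies there: on each connected component of $\Omega\setminus Z_u$, either $w\equiv 0$ or $w>0$. If $Z_u$ were empty, or small with connected complement, this alone would finish the proof. The whole difficulty is to cross $Z_u$.

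\textbf{Step 2: regularity of the solution near its critical set.} Because $f$ is positive and $C^1$, the standard Damascelli--Sciunzi second-order estimates apply. First, $|\nabla u|^{p-2}\nabla u\in W^{1,2}_{loc}$, and $u\in W^{2,2}_{loc}$ when $p<3$. Second, the key summability
$$\sup_{y\in\Omega'}\int_{\Omega'}\frac{\mathrm{d}x}{|\nabla u|^{(p-1)r}\,|x-y|^{\gamma}}\leq C, \qquad 0\leq r<1,\ \gamma<N-2,$$
holds on interior subdomains $\Omega'$. The proof tests the linearized equation for $u_{x_i}$ with $\varphi=\frac{\psi}{(\varepsilon+|\nabla u|)^{(p-1)r}|x-y|^\gamma}$, uses $f>0$ to control the lower-order term, and lets $\varepsilon\to0$. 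Consequences are that $|Z_u|=0$ and that $\rho:=|\nabla u|^{p-2}$ (or $\rho^{-1}$, depending on whether $p<2$ or $p>2$) is an admissible weight. Using this summability together with a potential estimate $|\phi(x)|\leq C\int|\nabla\phi||x-y|^{1-N}\,\mathrm{d}y$ and Hölder's inequality, one obtains a weighted Sobolev inequality
$$\|\phi\|_{L^{2^*_\rho}(B)}\leq C\|\nabla \phi\|_{L^2_\rho(B)}\qquad\text{with } 2^*_\rho>2 .$$
For $p<2$, ensuring $2^*_\rho>2$ together with the needed integrability of $\rho^{-1}$ forces the restriction $p>\frac{2N+2}{N+2}$. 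This explains the hypothesis of the lemma.

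\textbf{Step 3: weak Harnack and conclusion.} For $p>2$, $A\geq c\,\rho\, I$ since $|\nabla u|\leq |\nabla u|+|\nabla v|$. For $p<2$ the upper bound is instead controlled by $\rho$, and boundedness of $|\nabla u|+|\nabla v|$ gives a lower bound by a constant. In either case, run Moser iteration with the weighted Sobolev inequality on nonnegative supersolutions $w$ of $-\mathrm{div}(A\nabla w)+\Lambda w\geq0$, handling the $\Lambda$ term as a bounded zeroth-order perturbation. This gives, for every ball $B_{2\delta}(x_0)\subset\subset\Omega$, including balls centered on $Z_u$,
$$\|w\|_{L^1(B_{2\delta}(x_0))}\leq C\,\inf_{B_\delta(x_0)} w .$$
Thus $\{w=0\}$ is open. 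It is closed by continuity, so by connectedness $w\equiv0$ or $w>0$ in $\Omega$. The main obstacle is Step 2/3 at points of $Z_u$: obtaining the uniform weighted summability and the corresponding weighted Sobolev inequality with exponent strictly larger than $2$, which is exactly where $f>0$, $f\in C^1$ and the range of $p$ are used.
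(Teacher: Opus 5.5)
Your outline is correct and is essentially the proof of the cited result (Theorem 1.4 of \cite{LDBS}, Damascelli--Sciunzi); the paper itself gives no proof and only quotes that theorem. That proof likewise linearizes $w=v-u$ with the averaged matrix $A$, uses the summability of $|\nabla u|^{-1}$ for the solution to get weighted Sobolev/Poincar\'e inequalities with $\rho=|\nabla u|^{p-2}$, and runs a Moser-type weak Harnack for $w$ across the critical set, with $A\geq c\rho I$, $|A|\leq C$ if $p>2$ and $A\geq cI$, $|A|\leq C\rho$ if $p<2$.

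One correction concerns where $p>\frac{2N+2}{N+2}$ enters. For $p<2$ the function $\rho^{-1}=|\nabla u|^{2-p}$ is bounded, so only the unweighted Sobolev inequality is used. The threshold instead comes from needing $\rho\in L^q_{loc}$ for some $q>\frac N2$, so that H\"older's inequality on $\int|\nabla\eta|^2 w^{1-\beta}\rho$ loses less than the Sobolev gain $\frac{N}{N-2}$. The bound $\int|\nabla u|^{-(p-1)r}<\infty$ for $r<1$ supplies such a $q$ exactly when $\frac{p-1}{2-p}>\frac N2$.
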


\begin{lem}[Theorem 1.4 in \cite{LD}]\label{th3.2}
Suppose $\Omega$ is a bounded domain of $\R^N$, $1<p<\infty$ and let $u,v \in C^1(\Omega)$ weakly satisfy
$$ -\Delta_p u + \Lambda u \leq  -\Delta_p v + \Lambda v \quad\,\,\,\mbox{and}\quad\,u\leq v\,\,\,\,\,\mbox{in}\,\,\Omega,$$
and denote by $Z_v^u:=\{ x\in\Omega\,\mid\,|\nabla u|=|\nabla v| =0 \}$. Then if there exists $x_0\in\Omega\setminus Z_v^u$ with $u(x_0)=v(x_0)$, then $u\equiv v$ in the connected component of $\Omega\setminus Z_v^u$ containing $x_0$. The same result still holds if, more generally,
$$ -\Delta_p u - f(u) \leq  -\Delta_p v - f(v) \quad\,\,\,\mbox{and}\,\quad u\leq v \quad\,\mbox{in}\,\,\Omega$$
with $f:\R \to \R$ locally Lipschitz continuous.
\end{lem}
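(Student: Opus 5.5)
The plan is to linearize the difference $w:=v-u\geq0$ near a contact point where the gradients do not vanish. There the $p$-Laplacian is uniformly elliptic, so the weak Harnack inequality for linear divergence-form equations forces $w\equiv0$ locally. A connectedness argument then propagates this to the whole component.

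\emph{Step 1 (contact points).} Let $x_1\in\Omega\setminus Z_v^u$ with $u(x_1)=v(x_1)$. Since $w\geq0$ in $\Omega$ and $w(x_1)=0$, the point $x_1$ is an interior minimum of the $C^1$ function $w$. Hence $\nabla u(x_1)=\nabla v(x_1)=:\eta_1$. Because $x_1\notin Z_v^u$, at least one gradient is nonzero, so $\eta_1\neq0$. By continuity of $\nabla u,\nabla v$ there is a ball $B=B_\rho(x_1)\subset\Omega\setminus Z_v^u$ on which $|\nabla u-\eta_1|,|\nabla v-\eta_1|<|\eta_1|/2$. Consequently, for every $t\in[0,1]$,
$$\tfrac12|\eta_1|\leq |t\nabla v+(1-t)\nabla u|\leq \tfrac32|\eta_1| \quad\mbox{in } B .$$

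\emph{Step 2 (linearization).} Set $a(\eta)=|\eta|^{p-2}\eta$. Then
$$a(\nabla v)-a(\nabla u)=A(x)\nabla w,\qquad A(x)=\int_0^1 Da\big(t\nabla v+(1-t)\nabla u\big)\,\mathrm{d}t .$$
Here $Da(\eta)=|\eta|^{p-2}\big(I+(p-2)\frac{\eta\otimes\eta}{|\eta|^2}\big)$, whose eigenvalues lie between $\min\{1,p-1\}|\eta|^{p-2}$ and $\max\{1,p-1\}|\eta|^{p-2}$. By the bounds of Step 1, $A$ is measurable, bounded and uniformly elliptic in $B$, for every $p>1$. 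For the zeroth-order term, set $c(x):=\frac{f(v)-f(u)}{v-u}$ where $w\neq0$ and $c:=0$ otherwise. Then $|c|\leq L$, where $L$ is the Lipschitz constant of $f$ on the bounded range of $u,v$ over $\overline B$; in the case $f(s)=-\Lambda s$ we simply have $c=-\Lambda$. The assumed weak inequality then reads
$$-\mathrm{div}(A\nabla w)-c(x)w\geq0\quad\mbox{weakly in } B ,$$
and adding $L w\geq0$ gives $-\mathrm{div}(A\nabla w)+(L-c)w\geq0$ with a nonnegative bounded coefficient $L-c$.

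\emph{Step 3 (strong maximum principle).} Since $w\geq0$ is a weak supersolution of a uniformly elliptic linear equation with bounded measurable coefficients, Trudinger's weak Harnack inequality (or the Gilbarg--Trudinger strong maximum principle, Theorem 8.19) applies. As $\inf_{B_{\rho/2}}w=w(x_1)=0$, it gives $w\equiv0$ in $B_{\rho/2}(x_1)$.

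\emph{Step 4 (connectedness).} Let $\mathcal C$ be the connected component of $\Omega\setminus Z_v^u$ containing $x_0$, which is open. Consider $S=\{x\in\mathcal C: u(x)=v(x)\}$. It is nonempty since $x_0\in S$, and it is relatively closed by continuity. By Steps 1--3 applied at each of its points (all of which lie outside $Z_v^u$), $S$ is open. Hence $S=\mathcal C$, that is, $u\equiv v$ in $\mathcal C$.

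The main obstacle is Step 2: the uniform ellipticity of the linearized matrix $A$. This is exactly where the exclusion of the critical set $Z_v^u$ is used. Without it the convex combinations $t\nabla v+(1-t)\nabla u$ may degenerate, and $|\eta|^{p-2}$ blows up for $p<2$ or vanishes for $p>2$. Choosing the ball so small that both gradients stay in a small neighbourhood of the common nonzero value $\eta_1$ resolves this uniformly in $t$.
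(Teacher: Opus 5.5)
Your argument is correct, and it is essentially the standard proof of this strong comparison principle in Damascelli's paper; the present paper gives no proof of its own and only quotes it as Theorem 1.4 there. The route is the same: at a contact point outside $Z_v^u$ the two gradients coincide and are nonzero, $w=v-u\geq0$ satisfies a uniformly elliptic linear inequality nearby, the linear strong maximum principle (or weak Harnack inequality) forces $w\equiv0$ locally, and an open--closed argument spreads this over the component. Your choice of a ball on which both $\nabla u$ and $\nabla v$ stay within $|\eta_1|/2$ of the common value $\eta_1\neq0$ keeps the whole segment $t\nabla v+(1-t)\nabla u$ away from the origin. This neatly avoids the two-sided estimate $\int_0^1|t\eta+(1-t)\eta'|^{p-2}\,\mathrm{d}t\simeq(|\eta|+|\eta'|)^{p-2}$, which would otherwise be needed if one only knew that $|\nabla u|+|\nabla v|$ is bounded below.
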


We recall a version of the strong maximum principle and the H\"{o}pf Lemma for the $p$-Laplacian, c.f. \cite{JLV}, see also Theorem 2.1 in \cite{LDBS04}, Theorem 2.4 in \cite{LDSMLMSB} or \cite{LD,PS}.
\begin{lem}[Strong Maximum Principle and H\"{o}pf's Lemma, e.g. \cite{JLV}, Theorem 2.1 in \cite{LDBS04}]\label{hopf}
Let $\Omega$ be a domain in $\mathbb{R}^{N}$ and suppose that $u\in C^{1}(\Omega)$, $u\geq0$ in $\Omega$ weakly solves
\[-\Delta_{p}u+cu^{q}=g\geq0 \qquad \text{in} \,\,\Omega\]
with $1<p<+\infty$, $q\geq p-1$, $c\geq0$ and $g\in L^{\infty}_{loc}(\Omega)$. If $u\not\equiv0$, then $u>0$ in $\Omega$. Moreover, for any point $x_{0}\in\partial\Omega$ where the interior sphere condition is satisfied and $u\in C^{1}(\Omega\cup\{x_{0}\})$ and $u(x_{0})=0$, we have that $\frac{\partial u}{\partial s}(x_{0})>0$ for any inward directional derivative (this means that if $y$ approaches $x_{0}$ in a ball $B\subseteq\Omega$ that has $x_{0}$ on its boundary, then $\lim\limits_{y\rightarrow x_{0}}\frac{u(y)-u(x_{0})}{|y-x_{0}|}>0$).
\end{lem}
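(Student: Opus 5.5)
The argument I would give is the classical one of V\'azquez \cite{JLV}: compare $u$ with an explicit radial subsolution on an annulus. Both the strong maximum principle and the H\"opf boundary estimate follow from the same barrier construction.

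\emph{Barrier.} Fix a ball $B_R(y)$ and the annulus $A:=B_R(y)\setminus \overline{B_{R/2}(y)}$. On $A$ set
$$w(x)=\varepsilon\left(e^{-k|x-y|^2}-e^{-kR^2}\right),$$
with $k>0$ large and $\varepsilon>0$ small. Then $w$ is radial and vanishes on $\partial B_R(y)$. Its gradient never vanishes on $\overline A$, so $\Delta_p w$ is computed classically, with no degeneracy or singularity. Writing $r=|x-y|$, a direct computation gives
$$-\Delta_p w=(p-1)|w'|^{p-2}\Big(-w''-\tfrac{N-1}{(p-1)r}w'\Big).$$
Here $w'=-2k\varepsilon r e^{-kr^2}$ and $-w''=2k\varepsilon e^{-kr^2}(1-2kr^2)$. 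For $r\ge R/2$ and $k$ large, the term $-4k^2\varepsilon r^2e^{-kr^2}$ dominates, which gives
$$-\Delta_p w\le -c_1 (k\varepsilon)^{p-1}e^{-(p-1)kr^2}k$$
on $A$ for some $c_1>0$. The absorption term satisfies $c\,w^q\le c\,\varepsilon^{q}e^{-qkr^2}$.

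This is exactly where the hypothesis $q\ge p-1$ is used. For fixed large $k$, the ratio $\varepsilon^q/\varepsilon^{p-1}=\varepsilon^{q-p+1}$ stays bounded as $\varepsilon\to0$. Since $r\le R$ and $k$ is fixed, the exponentials are bounded above and below, so enlarging $k$ first and then shrinking $\varepsilon$ yields
$$-\Delta_p w+c\,w^q\le 0\le g \quad\text{in } A.$$

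\emph{Weak comparison.} Suppose $u\ge w$ on $\partial A$. Since $u$ and $w$ both lie in $C^1(\overline A)$, I test the difference of the two weak inequalities with $(w-u)^+\in W^{1,p}_0(A)$. Two facts then combine:
\begin{itemize}
\item the map $t\mapsto c\,t^q$ is nondecreasing for $c\ge0$, so the zero-order terms contribute with a favorable sign;
\item the monotonicity inequality \eqref{eq0802} of Lemma \ref{BSICI} controls the gradient terms.
\end{itemize}
Together these give $\int_{\{w>u\}}(|\nabla w|+|\nabla u|)^{p-2}|\nabla(w-u)|^2\le0$. Hence $(w-u)^+$ is constant on $A$, and since it vanishes on $\partial A$, we get $w\le u$ in $A$.

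\emph{Strong maximum principle.} Suppose $u\not\equiv0$ but $Z:=\{u=0\}\ne\emptyset$. Because $\Omega$ is connected, $Z$ is closed in $\Omega$, and $\{u>0\}$ is open and nonempty, we can pick $y\in\{u>0\}$ with $\mathrm{dist}(y,Z)<\mathrm{dist}(y,\partial\Omega)$. Take the largest ball $B_R(y)\subset\{u>0\}$; it touches $Z$ at some point $x_1\in\Omega$. On $\partial B_{R/2}(y)$ we have $u\ge m>0$ by compactness, so choosing $\varepsilon$ small gives $w\le m\le u$ there. On $\partial B_R(y)$ we have $w=0\le u$. Comparison then gives $u\ge w$ in $A$. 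Since $u(x_1)=0$ is an interior minimum of $u\ge0$ and $u\in C^1$, we have $\nabla u(x_1)=0$. But $u\ge w$ and $u(x_1)=w(x_1)=0$ force
$$\partial_\nu u(x_1)\ge \partial_\nu w(x_1)=2k\varepsilon Re^{-kR^2}>0$$
along the inward radial direction $\nu$, a contradiction. Hence $u>0$ in $\Omega$.

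\emph{H\"opf's Lemma.} Let $x_0\in\partial\Omega$ satisfy the interior sphere condition with ball $B=B_R(y)\subset\Omega$. Shrinking $R$ if necessary, $B$ is tangent to $\partial\Omega$ at $x_0$, and by the first part $u>0$ on $\overline{B_{R/2}(y)}$. Running the same comparison gives $u\ge w$ in $A$, with $u(x_0)=w(x_0)=0$. Therefore, for $z$ approaching $x_0$ inside $B$,
$$\liminf\frac{u(z)-u(x_0)}{|z-x_0|}\ge |\nabla w(x_0)|\cos\theta>0$$
for any nontangential inward direction, which is the claimed estimate.

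\emph{Main obstacle.} I expect the delicate step to be the barrier inequality when $c>0$: one must verify carefully that the order of choosing constants, $k$ first and then $\varepsilon$, really absorbs $c\,w^q$ using only $q\ge p-1$. Equally, the weak comparison must be justified for merely $C^1$ functions with the $p$-Laplacian, in both the singular and degenerate regimes. Working on the annulus, where $|\nabla w|>0$, removes the degeneracy for $w$; for $u$, the inequality of Lemma \ref{BSICI} handles it.
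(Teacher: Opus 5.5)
Your proof is correct and is essentially the classical argument of V\'azquez that the paper cites without giving a proof of its own. It has three ingredients: an explicit radial subsolution on an annulus, weak comparison via the monotonicity of $\xi\mapsto|\xi|^{p-2}\xi$ and of $t\mapsto c\,t^{q}$, and a touching argument at an interior zero or at $x_0$. The constant-ordering step you flag as delicate is harmless: since $q\ge p-1$, for every $\varepsilon\in(0,1]$ one has $c\,\varepsilon^{q-p+1}e^{-(q-p+1)kr^2}\le c$. So only $k$ needs to be large, so that the factor $k^{p}$ beats $c$, and $\varepsilon$ is then chosen purely for the boundary matching on $\partial B_{R/2}(y)$.
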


\subsection{Regularity results and Properties of the critical Set}\label{sc2.2}

In this subsection, we will give the integrability properties of $\frac{1}{|\nabla u|}, \frac{1}{|\nabla v|}$ and the connectivity properties of the critical set $Z_{u},Z_{v}$ for the nonnegative weak solutions $u,v$ of \eqref{eq1.1}, and the weighted Poincar\'{e} type inequality.

\smallskip

From Theorem \ref{th2.1-}, the nonnegative weak solutions $(u,v)$ of \eqref{eq1.1} satisfies $u, v\in L^{\infty}(\mathbb{R}^{N})\cap C^{1,\theta}_{loc} (\R^N)$. Moreover, using standard elliptic regularity, the solution $u\in C_{loc}^{1,\theta} (\Omega)$ belongs to the class $C^2(\Omega\setminus Z_u)$, where $\Omega\subset\subset\R^N$ and $Z_u=\{ x\in\Omega \ | \ |\nabla u|=0 \}$ (see \cite{ED83,DGNT,PT}), which is the
same for $v$. We need the following Hessian and reversed gradient estimates from \cite{BSR14}.
\begin{lem}[Hessian and reversed gradient estimates, \cite{BSR14}] \label{lm.4}
Let $\Omega\subset\subset\R^N$ and $1<p<\infty$. Assume $u\in W^{1,p}(\Omega)$ be a weak solution to
\begin{align}\label{eq10.252252}
-\Delta_p u =f(x) \quad\,\,\,\mbox{in}\,\,\Omega,
\end{align}
where $f(x)$ satisfies
\begin{align}\label{eq26.30.001}
    f(x) \in L^s(\Omega) \quad with \quad s>N
\end{align}
and condition ($I_\alpha$)  i.e., for some $0<\mu<\alpha(p-1)$, there holds:
\begin{align}\label{eq26.30.002}
f(x)\in W^{1,m}(\Omega)
\end{align}
for some $m>\frac{N}{2(1-\mu)}$, and given any $x_0\in \Omega$, there exist $C(x_0,\mu)>0$ and $\rho(x_0,\mu)>0$ such that, $B_{\rho(x_0,\mu)}(x_{0})\subset \Omega$ and
\begin{align}\label{eq26.30.003}
|f(x)|\geq C(x_0,\mu) |x-x_0|^\mu \qquad \mbox{in}\,\,B_{\rho(x_0,\mu)}(x_{0}).
\end{align}
Then, for any $x_0\in Z_u$ and $B_{2\rho}(x_0)\subset \Omega$, we have
\begin{align}\label{eq10.25.52}
\int_{B_\rho(x_0)} \frac{|\nabla u|^{p-2-\beta}(x)}{|x-y|^\gamma} |\nabla  u_{x_i}(x)|^2 \mathrm{d}x < C,\quad\,\,i=1,\cdots,N
\end{align}
uniformly for any $y\in \Omega$, where $0\leq\beta<1$, $\gamma=0$ if $N=2$, $\gamma<N-2$ if $N \geq 3$ and
$$C=C(N,p,x_0,\gamma,\beta,u,f,\rho)>0.$$
Moreover, we have, uniformly for any $y\in \Omega$,
\begin{align}\label{eq10.25.2.1}
\int_{B_\rho(x_0)} \frac{1}{|\nabla u|^t} \frac{1}{|x-y|^\gamma}\mathrm{d}x <\widetilde{C},
\end{align}
where $\max\{p-2,0\}\leq t < p-1$ and $\gamma=0$ if $N=2$, $\gamma<N-2$ if $N \geq 3$ and $$\widetilde{C}=\widetilde{C}(N,p,x_0,\gamma,t,u,f,\rho)>0.$$
\end{lem}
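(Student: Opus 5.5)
The lemma is proved by regularization, as in the Damascelli–Sciunzi approach of \cite{BSR14}. First establish \eqref{eq10.25.52} for smooth approximating solutions with constants independent of the regularization parameter. Then pass to the limit by Fatou's lemma. Finally derive \eqref{eq10.25.2.1} by testing the equation itself with a negative power of the gradient and using \eqref{eq10.25.52} together with the lower bound \eqref{eq26.30.003} on $f$.

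\emph{Step 1 (regularization).} Fix $B_{2\rho}(x_0)\subset\Omega$ and, for $\varepsilon\in(0,1)$, let $u_\varepsilon$ solve
\[-\mathrm{div}\big((\varepsilon+|\nabla u_\varepsilon|^2)^{\frac{p-2}{2}}\nabla u_\varepsilon\big)=f \ \ \text{in } B_{2\rho}(x_0),\qquad u_\varepsilon=u \ \ \text{on }\partial B_{2\rho}(x_0).\]
By \eqref{eq26.30.001}, uniform $C^{1,\theta}$ estimates (DiBenedetto, Tolksdorf, Lieberman) give $u_\varepsilon\to u$ in $C^1(\overline{B_{3\rho/2}(x_0)})$. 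Since the equation is uniformly elliptic for fixed $\varepsilon$ and $f\in W^{1,m}$, we also have $u_\varepsilon\in W^{2,2}_{loc}$, and the derivatives $w=\partial_{x_i}u_\varepsilon$ weakly solve the linearized equation
\[-\mathrm{div}\big(A_\varepsilon(\nabla u_\varepsilon)\nabla w\big)=f_{x_i}.\]
The eigenvalues of $A_\varepsilon$ lie between $\min\{1,p-1\}(\varepsilon+|\nabla u_\varepsilon|^2)^{\frac{p-2}{2}}$ and $\max\{1,p-1\}(\varepsilon+|\nabla u_\varepsilon|^2)^{\frac{p-2}{2}}$.

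\emph{Step 2 (weighted Hessian bound).} Test the linearized equation with
\[\varphi=\frac{w\,\psi^2}{(\varepsilon+|\nabla u_\varepsilon|^2)^{\beta/2}\,|x-y|^{\gamma}},\]
where $\psi$ is a cutoff equal to $1$ on $B_\rho(x_0)$ and supported in $B_{3\rho/2}(x_0)$. (To be rigorous, truncate $|x-y|^{-\gamma}$ near $y$ and let the truncation go.) The resulting terms are handled as follows.
\begin{itemize}
\item The principal term yields $\int (\varepsilon+|\nabla u_\varepsilon|^2)^{\frac{p-2-\beta}{2}}|\nabla w|^2|x-y|^{-\gamma}\psi^2$.
\item The term from differentiating the weight $(\varepsilon+|\nabla u_\varepsilon|^2)^{-\beta/2}$ is bounded by $\beta$ times a comparable quantity. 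Because $\beta<1$, it can be absorbed, which is where the restriction $\beta<1$ enters.
\item The term from $\nabla|x-y|^{-\gamma}$ is controlled after an integration by parts that uses $\Delta |x-y|^{-\gamma}\le 0$, i.e.\ $\gamma\le N-2$. The integrability of $|x-y|^{-\gamma-2}$ against bounded gradients requires $\gamma<N-2$ (for $N=2$, $\gamma=0$).
\item The terms with $\nabla\psi$ are bounded by $\|\nabla u\|_\infty$.
\item The right-hand side $\int f_{x_i}\varphi$ is bounded by Hölder's inequality, since $f\in W^{1,m}$ and $|x-y|^{-\gamma}\in L^{m'}_{loc}$.
\end{itemize}
These bounds are uniform in $\varepsilon$ and $y$. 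Letting $\varepsilon\to0$ and applying Fatou's lemma (on $\{\nabla u\neq0\}$, where $u_\varepsilon\to u$ in $C^2_{loc}$, and noting $Z_u$ has measure zero, which a posteriori follows from Step 3) gives \eqref{eq10.25.52}.

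\emph{Step 3 (reversed gradient estimate), the main obstacle.} Test the regularized equation with
\[\phi=\frac{\psi}{(\varepsilon+|\nabla u_\varepsilon|^2)^{t/2}\,|x-y|^{\gamma}}.\]
The left-hand side becomes $\int f\phi$. The right-hand side involves
\[\int(\varepsilon+|\nabla u_\varepsilon|^2)^{\frac{p-2-t}{2}}|D^2u_\varepsilon|\,|x-y|^{-\gamma}\psi\]
plus harmless terms. By Cauchy–Schwarz, this is at most the square root of the Step 2 quantity with exponent $\beta$, times the square root of
\[\int (\varepsilon+|\nabla u_\varepsilon|^2)^{\frac{p-2t+\beta-2}{2}}|x-y|^{-\gamma}\psi^2.\]
Since $t<p-1$, one may choose $\beta<1$ close to $1$ so that this last exponent is comparable to $-t'$ with $t'<t$. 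That term is then controlled by the left-hand side through Young's inequality, or bounded outright by $\|\nabla u\|_\infty$ when the exponent is nonnegative.

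The remaining difficulty is to convert $\int f|\nabla u|^{-t}|x-y|^{-\gamma}$ into $\int |\nabla u|^{-t}|x-y|^{-\gamma}$, since $f$ may vanish at $x_0$. Near each point where $f\neq0$ this is immediate. Near $x_0$ I would use \eqref{eq26.30.003}, replacing the test function by one carrying the extra factor $|x-x_0|^{-\mu}$ and then applying Hölder's inequality in $|x-x_0|$. The condition $\mu<\alpha(p-1)$ (paired with $t<p-1$) is exactly what should make the extra weight integrable against $|\nabla u|^{-t}$. I expect this localization at the zero of $f$ to be the delicate step, and would first try to handle it by a dyadic decomposition of $B_\rho(x_0)$ around $x_0$. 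A finite covering argument then gives \eqref{eq10.25.2.1}.
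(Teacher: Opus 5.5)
The paper gives no proof of this lemma; it is quoted from \cite{BSR14}. Your Steps 1--2, and the closing argument in Step 3 when $f\ge c>0$, are the standard Damascelli--Sciunzi scheme and are essentially sound. Two minor points: the term from differentiating $(\varepsilon+|\nabla u_\varepsilon|^2)^{-\beta/2}$ can only be absorbed after summing over $i$, where Cauchy--Schwarz in the $A_\varepsilon$-inner product bounds it by $\beta$ times the summed principal term. And since $f$ is only $W^{1,m}$, you should pass to the limit on $\{\nabla u\neq0\}$ by weak $W^{2,2}_{loc}$ convergence plus lower semicontinuity, not $C^2_{loc}$ convergence.

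The genuine gap is the one you flag yourself. Closing Step 3 uses $\int f\phi\ge c\int\psi(\varepsilon+|\nabla u_\varepsilon|^2)^{-t/2}|x-y|^{-\gamma}$, i.e.\ $f\ge c>0$ near $x_0$. What the lemma adds beyond the positive-source case is precisely that $f$ may vanish at $x_0$ like $|x-x_0|^{\mu}$, and for that case you only offer a plan. For the paper's own use this is harmless: in Corollary \ref{re2333} the source is $u^{\alpha}v^{\beta}$ or $u^{\beta}v^{\alpha}$, which is continuous and strictly positive, so your argument with $\inf_{B_{2\rho}(x_0)}f>0$ already suffices there.

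The plan does not close as sketched. With the extra factor $|x-x_0|^{-\mu}$ in $\phi$, the left side controls $I_t:=\int\psi(\varepsilon+|\nabla u_\varepsilon|^2)^{-t/2}|x-y|^{-\gamma}$. But after Cauchy--Schwarz against the Step 2 quantity, the Hessian term leaves $\int\psi(\varepsilon+|\nabla u_\varepsilon|^2)^{\frac{p-2+\beta-2t}{2}}|x-x_0|^{-2\mu}|x-y|^{-\gamma}$. Absorbing this into $I_t$ by Young's inequality needs $|x-x_0|^{-2\mu t/(p-2+\beta-t)}|x-y|^{-\gamma}\in L^1$ uniformly in $y$, and that fails as $t\uparrow p-1$.

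You can improve this by running Step 2 with the weight $|x-y|^{-\gamma}|x-x_0|^{-2\mu}$. This is admissible, via $a^{\theta}b^{1-\theta}\le\theta a+(1-\theta)b$, when $\gamma+2\mu<N-2$. The leftover is then $\int\psi(\varepsilon+|\nabla u_\varepsilon|^2)^{\frac{p-2+\beta-2t}{2}}|x-y|^{-\gamma}$, and the argument closes for every $t<p-1$. It is unavailable, however, when $N=2$ (Step 2 only allows $\gamma=0$) or when $N-2-2\mu\le\gamma<N-2$. In those cases the same bookkeeping, with or without H\"older interpolation, only gives $t<(p-1)\frac{N-\gamma}{2(1+\mu)}$; for $N=2$ that is $t<\frac{p-1}{1+\mu}$.

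A dyadic decomposition around $x_0$ does not rescue this. Normalizing the source on annuli would need an upper bound $f\lesssim|x-x_0|^{\mu}$ and a matching lower bound on $|\nabla u|$, and neither is assumed. Your claim that $\mu<\alpha(p-1)$ is ``exactly what'' makes the weight integrable is also unsupported: $\alpha$ never enters your estimates, and the integrability conditions you actually meet involve $\mu,t,\gamma,N$ but not $\alpha$. So the vanishing-source case, which is what \cite{BSR14} contributes beyond the positive-source estimate, is still missing from your proof.
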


\smallskip

From Lemma \ref{lm.4}, we can infer the following corollary.
\begin{cor}\label{re2333}
Let $1<p<N$ and $u$ be a weak positive solution of \eqref{eq1.1}. Setting $\rho_u:=|\nabla u|^{p-2}, \rho_v:=|\nabla v|^{p-2}$, for any $\Omega \subset\subset\R^N$, we have
\begin{align}\label{eq10.25.2}
\int_{\Omega} \frac{1}{\rho_u^t} \frac{1}{|x-y|^\gamma}\mathrm{d}x < C \quad and \quad \int_{\Omega} \frac{1}{\rho_v^t} \frac{1}{|x-y|^\gamma}\mathrm{d}x < C
\end{align}
uniformly for any $y\in\Omega$, where $\max\{p-2,0\}\leq (p-2)t<p-1$ and $\gamma=0$ if $N=2$, $\gamma<N-2$ if $N \geq 3$.
\end{cor}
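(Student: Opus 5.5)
The plan is to apply Lemma \ref{lm.4} to each equation of \eqref{eq1.1} separately, with the right-hand side frozen as a given function of $x$. For $u$ we take $f(x):=u^{\alpha}(x)v^{\beta}(x)$, and symmetrically for $v$. The conclusion \eqref{eq10.25.2.1} with exponent $(p-2)t$ in place of $t$ is precisely \eqref{eq10.25.2}, since $\rho_u^{t}=|\nabla u|^{(p-2)t}$. So the real work is to check the hypotheses \eqref{eq26.30.001}--\eqref{eq26.30.003} for this $f$, and then to pass from small balls around critical points to an arbitrary $\Omega\subset\subset\R^N$.

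\textbf{Step 1: checking the hypotheses.} By Theorem \ref{th2.1-}, $u,v\in C^{1,\theta}_{loc}(\R^N)\cap L^\infty(\R^N)$, and on any compact set $u,v\ge c>0$ by \eqref{eq0806}. Thus $f=u^\alpha v^\beta$ is bounded, giving $f\in L^s$ for every $s>N$. Since $u,v$ are bounded away from zero on compacts, $f$ is $C^1$ on compacts even when $\alpha<1$, so $f\in W^{1,\infty}\subset W^{1,m}$ for all $m$. For condition \eqref{eq26.30.003}, note that $f\ge c_1>0$ on any compact set. The bound $|f|\ge C|x-x_0|^\mu$ then holds trivially for any $\mu\in(0,\alpha(p-1))$ (here $\alpha$ is the parameter of condition $(I_\alpha)$, not the exponent in \eqref{eq1.1}), with $C=c_1\rho^{-\mu}$ on $B_\rho(x_0)$. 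Any admissible $\mu>0$ works, and $m$ large gives $m>\frac{N}{2(1-\mu)}$.

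\textbf{Step 2: covering and uniformity.} Given $\Omega\subset\subset\R^N$, we enlarge it to a bounded domain $\Omega'\supset\supset\Omega$ and apply Lemma \ref{lm.4} in $\Omega'$.
\begin{itemize}
\item Near each $x_0\in Z_u\cap\overline\Omega$ we get a ball $B_{\rho}(x_0)$ on which the integral is bounded uniformly in $y$.
\item Away from $Z_u$, the continuous function $|\nabla u|$ is bounded below by a positive constant on the compact set $\overline\Omega\setminus\bigcup B_{\rho}(x_0)$. There the integrand is bounded by $C|x-y|^{-\gamma}$, which is integrable uniformly in $y$ because $\gamma<N$.
\end{itemize}
Compactness of $Z_u\cap\overline\Omega$ lets us extract finitely many balls, and summing the bounds gives \eqref{eq10.25.2}. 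The case $p\ge2$ gives exponent range $0\le (p-2)t<p-1$. For $1<p<2$, $\rho_u^{-t}$ with $(p-2)t\ge0$ forces $t\le0$; then $\rho_u^{-t}=|\nabla u|^{(p-2)t}$ is bounded because $|\nabla u|$ is locally bounded, so that case is immediate. The same argument applies to $v$.

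The main obstacle is confirming that the condition $(I_\alpha)$ is genuinely satisfied, in particular the $W^{1,m}$ regularity of $f$ when $\alpha<1$ or $\beta<1$. This is handled by the strict positivity of $u$ and $v$ on compacts from Theorem \ref{th2.1-}. The rest is routine bookkeeping.
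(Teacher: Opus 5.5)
This is the paper's argument: you verify \eqref{eq26.30.001} and $(I_\alpha)$ for $u^\alpha v^\beta$ and $u^\beta v^\alpha$ from the positivity and $C^{1,\theta}_{loc}$ regularity in Theorem \ref{th2.1-}, apply \eqref{eq10.25.2.1} of Lemma \ref{lm.4} with exponent $(p-2)t$ near each point of $Z_u\cap\overline{\Omega}$, and conclude by finite covering, using that $|\nabla u|$ is bounded below away from $Z_u$. The one flaw is a sign slip in your closing remark on $1<p<2$: $\rho_u^{-t}=|\nabla u|^{-(p-2)t}$ is a nonpositive power of $|\nabla u|$ and blows up on $Z_u$ when $(p-2)t>0$, so that case is not immediate. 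It needs no separate treatment, however, because for $p<2$ the exponent $(p-2)t\in[0,p-1)$ is exactly the admissible range in Lemma \ref{lm.4}, so your Steps 1--2 already cover it verbatim and you can simply delete the remark.
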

\begin{proof}
 From Theorem \ref{th2.1-}, we know that $u^{\alpha}v^{\beta}$ and $u^{\beta}v^{\alpha}$ satisfy \eqref{eq26.30.001} and the condition ($I_\alpha$)
 (i.e., \eqref{eq26.30.002}-\eqref{eq26.30.003}) in Lemma \ref{lm.4}. Noting that $u\in C^2(\Omega\setminus Z_u)$, it follows from the finite covering theorem and \eqref{eq10.25.2.1} in Lemma \ref{lm.4} that
the first inequality of \eqref{eq10.25.2} holds. As the same inequality  for $v$ holds, it finishes the proof of Corollary \ref{re2333}.
\end{proof}

\begin{rem}\label{re2334}
Let $(u,v)$ be a pair of positive weak solutions of \eqref{eq1.1} and let $Z_u = \{x \in \R^N|\ |\nabla u(x)| = 0 \}$. Clearly, it follows from Theorem \ref{th2.1-} that the critical set $Z_u\subset B_{R_0}$ and $Z_u$ is a closed set. Moreover, \eqref{eq10.25.2} in Corollary \ref{re2333} implies that the Lebesgue measure of $Z_{u}$ is zero, i.e., $|Z_u|=0$. In the same argument, we have $|Z_v|=0$.
\end{rem}

From Lemma \ref{th3.2}, we can clearly see that it is very useful to know whether $\Omega\setminus {\left(Z_u\cup Z_v\right)}$ are connected or not. We have the following lemma on the connectivity properties of the critical set from \cite[Theorem 4.1 and Remark 4.1]{LDBS04}.
\begin{lem}[Properties of the Critical Set, Theorem 4.1 in \cite{LDBS04}]\label{lm.7}
Assume $1<p<N$ and let $u, v$ be a  weak positive solution of \eqref{eq1.1}. Let us define the critical set
$$Z_u = \{ x\in\Omega\ | \ |\nabla u(x)|=0 \}, \,\, Z_v = \{ x\in\Omega\ | \ |\nabla v(x)|=0 \},$$
where $\Omega$ is a general bounded domain. %and $B_{R_0}\subset\Omega$.
Then $\Omega\setminus {\left(Z_u\cup Z_v\right)}$ do not contain any connected component $C$ such that $\overline{C} \subset \Omega$. Moreover, if we assume that $\Omega$ is a smooth bounded domain with connected boundary, it follows that $\Omega\setminus {\left(Z_u\cup Z_v\right)}$ are connected.
\end{lem}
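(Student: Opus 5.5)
The plan is to argue by contradiction, following the flux (divergence-theorem) idea behind Theorem 4.1 in \cite{LDBS04}. Suppose $C$ is a connected component of $\Omega\setminus(Z_u\cup Z_v)$ with $\overline{C}\subset\Omega$. Then $\partial C\subset Z_u\cup Z_v$. On $C$ both $|\nabla u|$ and $|\nabla v|$ are positive, so $u,v\in C^2(C)$ by the regularity recalled in Subsection \ref{sc2.2}. The right-hand sides $u^\alpha v^\beta$ and $u^\beta v^\alpha$ are strictly positive and bounded on $\overline{C}$ by Theorem \ref{th2.1-}. Formally, integrating $-\Delta_p u=u^\alpha v^\beta$ over $C$ gives
\[0<\int_C u^\alpha v^\beta\,\mathrm{d}x=-\int_{\partial C}|\nabla u|^{p-2}\partial_\nu u\,\mathrm{d}\sigma ,\]
and the boundary term is zero wherever $\nabla u=0$. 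The whole proof is to make this rigorous and to control the part of $\partial C$ where only $\nabla v$ vanishes.

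\textbf{Rigorous version of the flux identity.} Since $\partial C$ need not be regular, I would use test functions instead of the divergence theorem. Let $G_\varepsilon(t)$ be a cutoff equal to $0$ for $t\le\varepsilon$, to $1$ for $t\ge 2\varepsilon$, and linear in between. Take
\[\psi_\varepsilon=G_\varepsilon(|\nabla u|)\,G_\varepsilon(|\nabla v|)\,\chi_C ,\]
which is Lipschitz with compact support in $C$, so it is admissible in Definition \ref{weak}. This gives
\[\int_C u^\alpha v^\beta\psi_\varepsilon\,\mathrm{d}x=\int_C|\nabla u|^{p-2}\langle\nabla u,\nabla\psi_\varepsilon\rangle\,\mathrm{d}x .\]
By monotone convergence the left side tends to $\int_C u^\alpha v^\beta\,\mathrm{d}x>0$. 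On the right there are two terms.
\begin{itemize}
\item The term coming from $\nabla G_\varepsilon(|\nabla u|)$ is bounded by
\[C\int_{C\cap\{\varepsilon<|\nabla u|<2\varepsilon\}}|\nabla u|^{p-1}\,\varepsilon^{-1}\,|D^2u|\,\mathrm{d}x\le C\int_{C\cap\{|\nabla u|<2\varepsilon\}}|\nabla u|^{p-2}|D^2u|\,\mathrm{d}x .\]
By Cauchy--Schwarz together with the Hessian estimate \eqref{eq10.25.52} (with $\beta$ close to $0$) and the integrability \eqref{eq10.25.2}, this tends to $0$, because the sets $\{|\nabla u|<2\varepsilon\}$ shrink to $Z_u$, which has measure zero (Remark \ref{re2334}). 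The hypotheses of Lemma \ref{lm.4} are checked exactly as in Corollary \ref{re2333}.
\item The cross term coming from $\nabla G_\varepsilon(|\nabla v|)$ is bounded by
\[C\int_{\{\varepsilon<|\nabla v|<2\varepsilon\}}|\nabla u|^{p-1}\,\varepsilon^{-1}\,|D^2v|\,\mathrm{d}x .\]
\end{itemize}

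\textbf{The main obstacle is this cross term.} Near points of $\partial C$ where $\nabla v=0$ but $\nabla u\neq0$, the weight $|\nabla u|^{p-1}$ gives no smallness. I would first try to symmetrize the argument.
\begin{itemize}
\item Test the $u$-equation with $\psi_\varepsilon$ and the $v$-equation with the same $\psi_\varepsilon$, and add the two identities. Both source terms are positive, so the left side stays bounded below by a positive constant.
\item Split $\partial C$ into $\Gamma_u=\partial C\cap Z_u$ and $\Gamma_v=\partial C\setminus Z_u\subset Z_v$. Near $\Gamma_v$, $u$ is $C^2$ with nonvanishing gradient, so the $u$-flux through the layer $\{\varepsilon<|\nabla v|<2\varepsilon\}$ converges to a genuine boundary flux. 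The sign of that flux should then be checked against the positivity of the source.
\end{itemize}

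If the sign cannot be controlled, the fallback is to apply the argument to each function separately. Running it with $\psi_\varepsilon=G_\varepsilon(|\nabla u|)\chi_{C'}$ on components $C'$ of $\Omega\setminus Z_u$ shows that $\Omega\setminus Z_u$ has no component compactly contained in $\Omega$, and likewise for $\Omega\setminus Z_v$. One then needs a topological step to pass from these two statements to the union $Z_u\cup Z_v$. This step is exactly the one I expect to require the structure of the system; in the moving-plane application one actually has $u=v$-type comparisons that make $Z_u$ and $Z_v$ comparable.

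\textbf{Connectedness.} For $\Omega$ smooth with connected boundary, I would argue as follows.
\begin{itemize}
\item Since $Z_u\cup Z_v$ is closed and contained in $B_{R_0}$ (Remark \ref{re2334}), we may take $\Omega\supset\overline{B_{R_0}}$. Otherwise a component not compactly contained in $\Omega$ touches $\partial\Omega$ away from the critical set, and the same argument applies along $\partial\Omega$.
\item A neighborhood of $\partial\Omega$ in $\Omega$ avoids the critical set and is connected, because $\partial\Omega$ is connected.
\item By the first part, every component of $\Omega\setminus(Z_u\cup Z_v)$ reaches this collar, so all components coincide.
\end{itemize}
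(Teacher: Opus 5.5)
There is a genuine gap, and it is the one you flag yourself: nothing in the proposal controls the part of $\partial C$ where only one of the two gradients vanishes. Formally, your test-function argument gives
\[
\int_C u^\alpha v^\beta\,\mathrm{d}x=-\int_{\partial C\setminus Z_u}|\nabla u|^{p-2}\partial_\nu u\,\mathrm{d}\sigma ,\qquad
\int_C u^\beta v^\alpha\,\mathrm{d}x=-\int_{\partial C\setminus Z_v}|\nabla v|^{p-2}\partial_\nu v\,\mathrm{d}\sigma .
\]
Positivity of the sources only says that each function decreases outward, on average, across the part of $\partial C$ where its own gradient is nonzero. That is what one expects anyway. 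Adding the identities gives no contradiction, so ``checking the sign against the positivity of the source'' is not a step; it is the whole problem.

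Your fallback also cannot be closed by topology. Let $\Omega$ be a large ball and let $Z_u$ and $Z_v$ be the closed upper and lower hemispheres of the unit sphere. Then $\Omega\setminus Z_u$ and $\Omega\setminus Z_v$ are connected, yet the open unit ball is a component of $\Omega\setminus(Z_u\cup Z_v)$ with closure in $\Omega$.

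Appealing to ``$u=v$-type comparisons'' is circular. Moving planes compare $u$ with $u_\lambda$ and $v$ with $v_\lambda$, never $u$ with $v$. Moreover, $u\equiv v$ is obtained only in Subsection \ref{sc5}, after the radial symmetry, and for $1<p<2$ that symmetry proof (Step 2 of Subsection \ref{sc4.2}) uses this very lemma. A new, system-specific idea would be needed, and the proposal does not supply one.

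What your fallback does prove is the single-function statement: neither $\Omega\setminus Z_u$ nor $\Omega\setminus Z_v$ has a component compactly contained in $\Omega$. That is the content of the cited Theorem 4.1 of \cite{LDBS04}, which is a single-equation result, so the paper's citation does not address the union either. It is also all the paper actually uses, since Step 2 of Subsection \ref{sc4.2} only needs $\Omega\setminus Z_u$ connected.

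Two smaller repairs are needed even there.
\begin{itemize}
\item For $1<p\le\frac32$ you cannot take the exponent $\beta$ of \eqref{eq10.25.52} close to $0$. Bounding $\int|\nabla u|^{p-2+\beta}$ via \eqref{eq10.25.2.1} requires $2-p-\beta<p-1$. Take $\beta\in(\max\{0,2-p\},1)$ instead. Then $|\nabla u|^{p-2+\beta}$ is bounded, and the second Cauchy--Schwarz factor is $O\big(|C\cap\{|\nabla u|<2\varepsilon\}|^{1/2}\big)\to0$.
\item In the connectedness part you cannot choose $\Omega\supset\overline{B_{R_0}}$, and the ``otherwise'' clause argues nothing. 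A piece of the critical set running from $\partial\Omega$ to $\partial\Omega$ would disconnect $\Omega$ without producing a compactly contained component. The hypothesis $\overline{B_{R_0}}\subset\Omega$, which the paper imposes when it applies the lemma, must be assumed; under it your collar argument is correct.
\end{itemize}
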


Finally, we give the weighted Poincar\'{e} type inequality inequality, which plays a key role in the proof of Theorem \ref{th2} via the moving planes techniques in the degenerate elliptic case $p>2$. To this end, we need the following definition.
\begin{defn}[Weighted Sobolev spaces]\label{lm.3-}
Assume $\Omega\subset\R^N$ is a bounded domain and let $\rho\in L^1(\Omega)$ be a positive function. We define $H^{1,p}(\Omega,\rho)$ as the completion of $C^1(\overline{\Omega})$ (or $C^\infty(\overline{\Omega})$) with the norm
$$ \| u\|_{ H^{1,p}(\Omega,\rho) } := \| u \|_{L^p(\Omega)} + \| \nabla u\|_{L^p(\Omega,\rho)}$$
and $\| \nabla u\|^p_{L^p(\Omega,\rho)} = \int_\Omega  \rho  |\nabla u |^p \mathrm{d}x$. In this way $H^{1,2}(\Omega,\rho)$ is a Hilbert space, and we also define $H_0^{1,p}(\Omega,\rho)$ as the closure of $C_0^\infty(\overline{\Omega})$ in $H^{1,p}(\Omega,\rho)$.
\end{defn}

\begin{lem}[Weighted Poincar\'{e} type inequality, \cite{LDBS04,FV,LPLDHT}]\label{lm.3}
Assume $\Omega\subset\R^N$ is a bounded domain and let $\rho\in L^1(\Omega)$ be a positive function such that
\begin{align}\label{eq10.25.3}
\int_\Omega \frac{1}{\rho |x-y|^\gamma}\mathrm{d}y < C, \,\,\,\quad \forall \,\, x\in\Omega,
\end{align}
where $\gamma<N-2$ if $N \geq 3$ and $\gamma=0$ if $N = 2$. Let $u\in H^{1,2}(\Omega,\rho)$ be such that
\begin{align}\label{eq10.25.4}
|u(x)|\leq C \int_\Omega \frac{|\nabla u(y)|}{ |x-y|^{N-1}}\mathrm{d}y,\quad\,\,\,\forall \,\, x\in\Omega.
\end{align}
Then we have
\begin{equation}\label{wp}
  \int_\Omega u^2(x) \mathrm{d}x \leq C(\Omega) \int_\Omega \rho |\nabla u(x)|^2 \mathrm{d}x,
\end{equation}
where $C(\Omega)\to 0$ if $|\Omega|\to 0$. The same inequality holds for $u\in H^{1,2}_0(\Omega,\rho)$.
\end{lem}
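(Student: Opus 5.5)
The plan is to turn the pointwise representation \eqref{eq10.25.4} into an $L^2$ bound by a weighted Cauchy--Schwarz (Schur-type) argument. The weight $\rho^{-1}$ is controlled through \eqref{eq10.25.3}, and the remaining Riesz-type kernel through its integrability on the bounded set $\Omega$. By density it suffices to treat $u\in C^1(\overline{\Omega})$ satisfying \eqref{eq10.25.4}; the case $u\in H^{1,2}_0(\Omega,\rho)$ is handled the same way once the representation is available, which is standard for functions vanishing on $\partial\Omega$. I will then pass to the limit in the completion $H^{1,2}(\Omega,\rho)$ using Fatou's lemma.

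The key step is the splitting
$$\frac{|\nabla u(y)|}{|x-y|^{N-1}}=\Big(\rho(y)^{-\frac12}|x-y|^{-a}\Big)\Big(\rho(y)^{\frac12}|\nabla u(y)|\,|x-y|^{-b}\Big),\qquad a+b=N-1 .$$
Cauchy--Schwarz in $y$ then gives
$$u^2(x)\le C\Big(\int_\Omega\frac{dy}{\rho(y)|x-y|^{2a}}\Big)\Big(\int_\Omega\frac{\rho(y)|\nabla u(y)|^2}{|x-y|^{2b}}dy\Big).$$
If $2a\le\gamma$, the first factor is bounded uniformly in $x$ by \eqref{eq10.25.3}, because $\Omega$ is bounded and hence $|x-y|^{-2a}\le C|x-y|^{-\gamma}$. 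Integrating in $x$ and using Fubini, I would then get
$$\int_\Omega u^2\,dx\le C\sup_{y\in\Omega}\Big(\int_\Omega\frac{dx}{|x-y|^{2b}}\Big)\int_\Omega\rho|\nabla u|^2\,dy .$$
Provided $2b<N$, the inner integral is at most $C|\Omega|^{1-2b/N}$, by the rearrangement bound for $\int_\Omega|x-y|^{-2b}\,dx$ over sets of given measure. This yields \eqref{wp} with $C(\Omega)\to0$ as $|\Omega|\to0$.

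The main obstacle is the exponent bookkeeping. The two requirements $2a\le\gamma$ and $2b<N$, together with $a+b=N-1$, force $\gamma>N-2$, so the naive splitting is incompatible with the stated range $\gamma<N-2$. My first attempt at a fix is to interpolate rather than split symmetrically. I would apply H\"older with three factors, or equivalently a Schur test with test functions $h(y)=\rho(y)^{-s}$ and $g(x)\equiv1$, distributing the power of $\rho$ unevenly so that only $\int\rho^{-1}|x-y|^{-\gamma}\,dy$ appears. The complementary kernel power would be absorbed by the Hardy--Littlewood--Sobolev/Riesz potential estimate $I_1:L^q\to L^{q^*}$ on the bounded set $\Omega$. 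If this still fails in the stated range, I would check the exact normalisation of $\gamma$ in \cite{LDBS04,FV} (the role of the exponent $N-2$ versus $N-1$ in the kernel). I expect that the sources actually use the weight condition in the form that makes the symmetric splitting above close, and the remainder of the argument would then go through unchanged.
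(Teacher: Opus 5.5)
\textbf{There is a genuine gap, and it cannot be filled under the stated hypotheses.} Your exponent bookkeeping is correct: the symmetric splitting only closes when $\gamma>N-2$. But this is more than a technical obstacle. With only the first power $\rho^{-1}$ in \eqref{eq10.25.3} and $\gamma<N-2$, the inequality is false for $N\ge 3$. So the uneven H\"older/Schur redistribution you propose as a fix cannot succeed either.

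Here is a counterexample. Take $\Omega=B_1$, $\gamma=0$ and $\rho(y)=|y|^{\kappa}$ with $2<\kappa<N$.
\begin{itemize}
\item $\rho\in L^1(B_1)$ and $\int_{B_1}\rho^{-1}\,dy<\infty$, so \eqref{eq10.25.3} holds.
\item Let $w_r(y)=\phi(|y|/r)$, where $\phi$ is a fixed cutoff equal to $1$ on $[0,1]$ and $0$ on $[2,\infty)$. Then $w_r\in C^1_c(B_1)$ and it satisfies \eqref{eq10.25.4}.
\item However, $\int w_r^2\ge c\,r^N$ while $\int\rho|\nabla w_r|^2\le C\,r^{N+\kappa-2}$. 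The quotient therefore grows like $r^{2-\kappa}\to\infty$.
\end{itemize}
Reading the hypothesis as ``for every $\gamma<N-2$'' does not help. The weight $\rho(y)=|y|^2\log^{-2}(e/|y|)$ satisfies \eqref{eq10.25.3} for all such $\gamma$, and the same test functions give a quotient of order $\log^2(1/r)$. Your expectation that the sources use a condition making the symmetric splitting close (i.e.\ $\gamma>N-2$) is also not what happens.

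\textbf{The missing ingredient} is integrability of a higher power of $\rho^{-1}$:
\begin{equation*}
\sup_{x\in\Omega}\int_\Omega\rho(y)^{-t}|x-y|^{-\gamma}\,dy\le C,\qquad t>1,\quad 2t>N-\gamma .
\end{equation*}
This is achievable by taking $\gamma<N-2$ close to $N-2$. To my knowledge this is the form of the hypothesis in the Damascelli--Sciunzi weighted Sobolev inequality, and it is what Corollary \ref{re2333} actually provides for $t\in(1,\frac{p-1}{p-2})$.

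With this hypothesis the argument goes as follows. Set $g=\rho^{1/2}|\nabla u|$ and $s=\frac{2t}{2t-1}\in(1,2)$. Apply H\"older with exponents $2t$ and $s$ to the product $\bigl(\rho^{-1/2}|x-y|^{-\gamma/(2t)}\bigr)\bigl(g\,|x-y|^{\gamma/(2t)-(N-1)}\bigr)$. This gives
\begin{equation*}
|u(x)|^{s}\le C\int_\Omega \frac{g(y)^{s}}{|x-y|^{\kappa}}\,dy,\qquad \kappa=\frac{2t(N-1)-\gamma}{2t-1}.
\end{equation*}
The condition $\kappa<N$ is equivalent to $2t>N-\gamma$; for $t=1$ it reduces to your condition $\gamma>N-2$. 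Since $g^{s}\in L^{2/s}$ with $2/s>1$, the Schur/Young bound for the kernel $|x-y|^{-\kappa}$ on $\Omega$ yields
\begin{equation*}
\int_\Omega u^2\le C\Bigl(\sup_{x}\int_\Omega|x-y|^{-\kappa}\,dy\Bigr)^{2/s}\int_\Omega\rho|\nabla u|^2 .
\end{equation*}
By your rearrangement bound, the bracket is at most $C|\Omega|^{1-\kappa/N}\to 0$.

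So the lemma, and your Fubini/rearrangement endgame, work once the hypothesis is stated with $\rho^{-t}$, $t>1$, instead of $\rho^{-1}$. The paper gives no proof of its own here, only a citation, and the printed hypothesis drops this exponent.
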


\begin{rem}\label{rem0}
Assume $1<p<N$ and let $u$ be a positive weak solution of \eqref{eq1.1}. By Theorem \ref{th2.1-}, we know that, for any $p\geq2$, $\rho:=|\nabla u|^{p-2}\in L^1(\Omega)$ for any $\Omega\subset\subset\R^N$. In addition, from \eqref{eq10.25.2} in Corollary \ref{re2333} and Remark \ref{re2334}, we deduce that \eqref{eq10.25.3} holds for $\rho=|\nabla u|^{p-2}$ and $\rho>0$ a.e.. Hence the weighted Poincar\'{e} type inequality \eqref{wp} in Lemma \ref{lm.3} holds with $\rho=|\nabla u|^{p-2}$ for $p\geq2$. Obviously, we have the same conclusion for $v$.
\end{rem}

\section{Proof of Theorem \ref{th2.1-}: the sharp estimates on asymptotic behaviors of positive solutions}\label{sc3}

In this section, we will first carry out our proof of Theorem \ref{th2.1-} and establish the sharp estimates on asymptotic behaviors of positive weak solutions to the equation \eqref{eq1.1}.

\smallskip

In order to investigate the sharp asymptotic behavior of positive solution $(u, v)$ to \eqref{eq1.1} and $|\nabla u|,|\nabla v|$, we first consider the following system:
\begin{align}\label{eq10.25.1}
\left\{ \begin{array}{ll}
\int_{\R^N} |\nabla u|^{p-2} \langle \nabla u, \nabla\phi\rangle \mathrm{d}x = \int_{\R^N}u^{\alpha}v^{\beta}\phi(x)\mathrm{d}x, \,\,\,\,&  \phi \in C_0^\infty(\R^N). \\ \\
\int_{\R^N} |\nabla v|^{p-2} \langle \nabla v, \nabla\psi\rangle \mathrm{d}x = \int_{\R^N}u^{\beta}v^{\alpha}\psi(x)\mathrm{d}x, \,\,\,\,&  \psi \in C_0^\infty(\R^N). \\ \\
u,v \in D^{1,p}(\R^N), \,\,\,\,u,v>0\,\,\,\,  \mbox{in}\, \  \R^N.
\end{array}
\right.\hspace{1cm}
\end{align}

\subsection{Local boundedness}\label{sc3.1}
Inspired by \cite{CDPSYS,ED83,PT}, we can prove the $C^{1,\alpha}$-regularity and a better preliminary estimate on decay rate in the following Lemmas \ref{lm.8} and \ref{lm.5} via the De Giorgi-Moser-Nash iteration techniques, which goes back to e.g. \cite{TMOSER}.
\begin{lem}[$C^{1,\alpha}$-regularity]\label{lm.8}
Let $(u, v)$ be a pair of solutions to the equations \eqref{eq1.1} with $p>1$. Then $(u, v)\in C^{1,\alpha}(\R^N)\times C^{1,\alpha}(\R^N)$ for some $0<\alpha<\min\{1,\frac{1}{p-1}\}$.
\end{lem}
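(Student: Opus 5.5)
The plan is to reduce the system to two scalar quasi-linear equations with a potential that lies in a critical Lebesgue space, and then to invoke the classical regularity theory for the $p$-Laplacian. Write
\[
-\Delta_p u = a(x)\,u^{p-1},\qquad a(x):=u^{\alpha-p+1}v^{\beta},
\]
and analogously $-\Delta_p v=b(x)v^{p-1}$ with $b=u^{\beta}v^{\alpha-p+1}$. Since $\alpha+\beta=p^\star-1$ and $u,v\in L^{p^\star}(\R^N)$, Young/H\"older gives $|u^\alpha v^\beta|\le C(u^{p^\star-1}+v^{p^\star-1})$. This yields the key comparison
\[
u^{\alpha}v^{\beta}\le \big(u^{p^\star-p}+v^{p^\star-p}\big)\big(u^{p-1}+v^{p-1}\big)\cdot C.
\]
We therefore treat the vector quantity $w:=u+v$ (or both components simultaneously), which satisfies a differential inequality of the form $-\Delta_p$-type with potential $V:=C(u+v)^{p^\star-p}\in L^{N/p}(\R^N)$.

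\textbf{Step 1 ($L^q$ bounds for all $q<\infty$).} Test each equation with $\phi=\eta^p\,u\min\{u,L\}^{p(s-1)}$, and similarly for $v$, where $\eta$ is a cutoff. Summing the two inequalities, we use the bound above to control the right-hand sides by $\int V\,\eta^p(u^{ps}+v^{ps})$ (with truncations). Because $V\in L^{N/p}$, we split $V=V\chi_{\{V>K\}}+V\chi_{\{V\le K\}}$, with $\|V\chi_{\{V>K\}}\|_{L^{N/p}}$ small. The large part is absorbed by Sobolev's inequality into the left-hand side, exactly as in the Brezis--Kato/Trudinger argument. Letting $L\to\infty$ gives $u,v\in L^{q}_{loc}$, and in fact in $L^q(\R^N)$ for every $q\in[p^\star,\infty)$. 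A global version, obtained by dropping $\eta$ and using the $D^{1,p}$ setting, holds as well.

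\textbf{Step 2 ($L^\infty$ bounds).} Once $u,v\in L^q$ with $q$ large, the right-hand sides $u^\alpha v^\beta$ and $u^\beta v^\alpha$ belong to $L^{r}_{loc}$ for some $r>N/p$, uniformly on unit balls. Serrin's local boundedness theorem, or a direct Moser iteration, then gives $\sup_{B_1(x)}u\le C(\|u\|_{L^{p^\star}(B_2(x))}+1)$ with $C$ independent of $x$. Hence $u,v\in L^\infty(\R^N)$.

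\textbf{Step 3 ($C^{1,\alpha}$ regularity).} With $u,v$ bounded, the right-hand sides are bounded on $\R^N$. Here one should check that $u^\alpha$ with $\alpha\ge0$ is harmless, since it is bounded by the previous step. The $C^{1,\alpha}$ estimates of DiBenedetto and Tolksdorf (\cite{ED83,PT}) then give $\|u\|_{C^{1,\alpha}(B_1(x))}\le C$ uniformly in $x$, for some $\alpha\in(0,\min\{1,\frac1{p-1}\})$. The same holds for $v$, and therefore $(u,v)\in C^{1,\alpha}(\R^N)\times C^{1,\alpha}(\R^N)$.

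\textbf{Main obstacle.} I expect the hardest step to be Step 1, specifically the coupling in the iteration. The test function for the $u$-equation produces $\int u^\alpha v^\beta u^{p(s-1)+1}$, which involves powers of $v$ that are not directly controlled by the $u$-energy. I would handle this with Young's inequality in the form
\[
u^{\alpha}v^{\beta}u^{\gamma}\le V\,(u^{\gamma+p-1}+v^{\gamma+p-1}).
\]
This is why it is essential to run the iteration on both components simultaneously with the same exponent $s$ and the same truncation level, so that the combined left-hand side dominates. The truncation must also be chosen carefully so that all test functions lie in $D^{1,p}$.
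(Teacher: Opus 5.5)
Your proposal is correct. Its architecture matches the paper's: a Moser-type iteration to reach every $L^q$, then a cited local boundedness theorem, then the DiBenedetto--Tolksdorf $C^{1,\alpha}$ estimates. The absorption device and the scope differ, though.

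\textbf{The paper's route.} It tests with $\xi^pG(u)$ and $\xi^pG(v)$. It handles the coupling by splitting into $\{u>v\}$ and $\{u\le v\}$ and using that $G$ is increasing. It absorbs the critical term by working on balls $B(x_0,2R)$ so small that $\|u\|_{L^{p^\star}}$ and $\|v\|_{L^{p^\star}}$ there fall below a $q$-dependent threshold. As a result it only gets $L^r_{loc}$ and $C^{1,\alpha}_{loc}$. Global boundedness of $u,v$ is actually supplied later by the decay estimate of Lemma \ref{lm.5}.

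\textbf{Your route.} You split the potential $V=C(u+v)^{p^\star-p}\in L^{N/p}(\R^N)$ Brezis--Kato style and bound the coupling pointwise through $\max\{u,v\}$. Since $\|V\chi_{\{V>K\}}\|_{L^{N/p}(\R^N)}\to0$, this runs without cutoffs and gives $u,v\in L^q(\R^N)$ for all $q\ge p^\star$. Serrin's estimate then gives $L^\infty(\R^N)$ and $C^{1,\alpha}$ bounds uniform in $x$. That is the statement exactly as written, and it does not depend on Lemma \ref{lm.5}. In both versions the threshold depends on the exponent ($R$ for the paper, $K$ for you), so neither iteration reaches $L^\infty$ on its own. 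The appeal to a local boundedness theorem is genuinely needed in both.

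\textbf{Three small points to tighten.}
\begin{itemize}
\item The coupling with truncations closes because $t\mapsto t\min\{t,L\}^{p(s-1)}$ is increasing. This gives $\max\{u,v\}^{p-1}\,u\min\{u,L\}^{p(s-1)}\le w_u^p+w_v^p$, with $w_u=u\min\{u,L\}^{s-1}$ and $w_v=v\min\{v,L\}^{s-1}$. You should state this explicitly, since it is exactly your ``main obstacle''.
\item In the cutoff-free global version, start the iteration at $ps=p^\star$. Nothing guarantees $u\in L^p(\R^N)$; it in fact fails when $N\le p^2$. So the bounded part $K\int(u^{ps}+v^{ps})$ is finite only from that exponent on, and you then bootstrap along $p^\star\chi^k$ and interpolate.
\item Drop the suggestion that $w=u+v$ satisfies a $p$-Laplacian inequality with potential $V$. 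Since $\Delta_p$ is not additive for $p\neq2$ this is false. Your actual argument, which treats both components with the same $s$ and the same truncation level $L$, does not need it.
\end{itemize}
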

\begin{proof}
Lemma \ref{lm.8} can be proved via the De Giorgi-Morse-Nash iteration method. Let $\Omega \subset\subset \R^N$ be any bounded domain. For any $q\geq1$ and $k>0$, let
$$F(t)=\left\{ \begin{array}{ll}
                t^q,\,\,\,\,&  \mbox{if}\,\, 0\leq t\leq k, \\
                qk^{q-1}t-(q-1)k^q,\,\,\,\,&  \mbox{if}\,\, t\geq k, \\
                  \end{array}
        \right.$$
and
$$G(t)=F(t)[F^\prime(t)]^{p-1}=\left\{ \begin{array}{ll}
                q^{p-1} t^{(q-1)p+1},\,\,\,\,&  \mbox{if}\,\, 0\leq t\leq k, \\
                q^{p-1} k^{(q-1)p} [qt-(q-1)k],\,\,\,\,&  \mbox{if}\,\, t\geq k. \\
                  \end{array}
        \right.$$
It is easy to see that $G$ is a piecewise $C^1$--function with only one corner at $t=k$ and
\begin{align}\label{eq10.28.1}
 G^\prime(t)=\left\{ \begin{array}{ll}
                [(q-1)p+1]q^{-1}[F^\prime (t)]^p,\,\,\,\,&  \mbox{if}\,\, 0\leq t\leq k, \\
                {[F^\prime(t)]}^p,\,\,\,\,&  \mbox{if}\,\, t\geq k, \\
                  \end{array}
        \right.
\end{align}
and
\begin{align}\label{eq10.28.2}
 t^{p-1} G(t)\leq q^{p-1}{[F(t)]}^p,
\end{align}
where $[(q-1)p+1]q^{-1}\geq1$ and $C$ is independent of $k$.

Now we choose $q>1$, %such that $qp \leq p^{\star}$,
and let
$$\phi=\xi^p G(u) \quad \text{and} \quad \psi=\xi^p G(v), $$
where $\xi \in C^\infty_0 (\Omega)$ to be determined later. Then the equation \eqref{eq10.25.1} yields
$$ \int_{\Omega} \xi^p G^\prime(u) |\nabla u|^p \mathrm{d}x + p\int_{\Omega} \xi^{p-1}| \nabla u|^{p-2} G(u) \nabla u \cdot \nabla \xi \mathrm{d}x = \int_{\Omega} u^{\alpha}v^{\beta} \xi^p G(u) \mathrm{d}x.$$
$$ \int_{\Omega} \xi^p G^\prime(v) |\nabla v|^p \mathrm{d}x + p\int_{\Omega} \xi^{p-1}| \nabla v|^{p-2} G(v) \nabla v \cdot \nabla \xi \mathrm{d}x = \int_{\Omega} u^{\beta}v^{\alpha} \xi^p G(v) \mathrm{d}x.$$
Noting that $G(x)$ is increasing, for any $\varepsilon\in(0,1)$, by direct computations and Young's inequality with $\varepsilon$, the previous identity becomes
\begin{align*}
&\quad \int_{\Omega} \xi^p [F^\prime(u)]^p |\nabla u|^p \mathrm{d}x \leq \int_{\Omega} \xi^p G^\prime(u) |\nabla u|^p \mathrm{d}x \\
&\leq p \int_{\Omega} \xi^{p-1}| \nabla u|^{p-1} [F^\prime(u)]^{p-1} F(u) |\nabla \xi| \mathrm{d}x + \int_{\Omega} u^{\alpha}v^{\beta} \xi^p G(u) \mathrm{d}x \\
&\leq \varepsilon \int_{\Omega} \xi^p [F^\prime(u)]^p |\nabla u|^p \mathrm{d}x + C_\varepsilon \int_{\Omega}  [F(u) |\nabla \xi|]^p \mathrm{d}x\\
&\quad + C \int_{\Omega \cap \{u> v\}} u^{p^{\star}-1} \xi^p G(u)\mathrm{d}x + C \int_{\Omega \cap \{u\leq v\}} v^{p^\star-1} \xi^p G(v) \mathrm{d}x\\
&\leq \varepsilon \int_{\Omega} \xi^p [F^\prime(u)]^p |\nabla u|^p \mathrm{d}x + C_\varepsilon \int_{\Omega}  [F(u) |\nabla \xi|]^p \mathrm{d}x\\
&\quad + Cq^{p-1} \int_{\Omega \cap \{u> v\}} u^{p^{\star}-p} [\xi F(u)]^p\mathrm{d}x + Cq^{p-1} \int_{\Omega \cap \{u\leq v\}} v^{p^\star-p}  [\xi F(v)]^p \mathrm{d}x\\
&\leq \varepsilon \int_{\Omega} \xi^p [F^\prime(u)]^p |\nabla u|^p \mathrm{d}x +  C_\varepsilon \int_{\Omega}  [F(u) |\nabla \xi|]^p \mathrm{d}x\\
&\quad+ Cq^{p-1}\|u\|_{L^{p^{\star}}(\Omega\cap supp(\xi))}^{p^{\star}-p}\left(\int_{\Omega} [\xi F(u)]^{p^{\star}} \mathrm{d}x\right)^\frac{p}{p^{\star}}+Cq^{p-1}\|v\|_{L^{p^{\star}}(\Omega\cap supp(\xi))}^{p^{\star}-p} \left(\int_{\Omega} [\xi F(v)]^{p^{\star}} \mathrm{d}x\right)^\frac{p}{p^{\star}},
\end{align*}
where we also have used \eqref{eq10.28.1} and \eqref{eq10.28.2}, as well as the definition of $G(u)$. Then, taking $\varepsilon=\frac{1}{2}$, we get
\begin{align*}
\quad \int_{\Omega} \xi^p  [F^\prime(u)]^p |\nabla u|^p \mathrm{d}x &\leq C \int_{\Omega}  [F(u) |\nabla \xi|]^p \mathrm{d}x\\
& \quad+ Cq^{p-1}\|u\|_{L^{p^{\star}}(\Omega\cap supp(\xi))}^{p^{\star}-p}\left(\int_{\Omega} [\xi F(u)]^{p^{\star}} \mathrm{d}x\right)^\frac{p}{p^{\star}}\\
& \quad+Cq^{p-1}\|v\|_{L^{p^{\star}}(\Omega\cap supp(\xi))}^{p^{\star}-p} \left(\int_{\Omega} [\xi F(v)]^{p^{\star}} \mathrm{d}x\right)^\frac{p}{p^{\star}} .
\end{align*}

Using the Sobolev inequality, we can derive
\begin{align}\label{eq10.28.3}
\left( \int_{\Omega} |\xi F(u)|^{p^{\star}}\mathrm{d}x  \right)^\frac{p}{p^{\star}} &\leq \int_{\Omega} |\xi \nabla F(u) + F(u) \nabla\xi|^p \mathrm{d}x \\
&\leq C_1 \int_{\Omega} \xi^p  [F^\prime(u)]^p |\nabla u|^p \mathrm{d}x + C_1 \int_{\Omega}  [F(u) |\nabla \xi|]^p \mathrm{d}x \nonumber\\
& \leq C \int_{\Omega}  [F(u) |\nabla \xi|]^p \mathrm{d}x \nonumber\\
& \quad+ Cq^{p-1}\|u\|_{L^{p^{\star}}(\Omega\cap supp(\xi))}^{p^{\star}-p}\left(\int_{\Omega} [\xi F(u)]^{p^{\star}} \mathrm{d}x\right)^\frac{p}{p^{\star}}\nonumber \\
& \quad+Cq^{p-1}\|v\|_{L^{p^{\star}}(\Omega\cap supp(\xi))}^{p^{\star}-p} \left(\int_{\Omega} [\xi F(v)]^{p^{\star}} \mathrm{d}x\right)^\frac{p}{p^{\star}}. \nonumber
\end{align}
Similarly, we can also derive
\begin{align}\label{eq10.28.3-}
    \left( \int_{\Omega} |\xi F(v)|^{p^{\star}}\mathrm{d}x  \right)^\frac{p}{p^{\star}} & \leq C \int_{\Omega}  [F(v) |\nabla \xi|]^p \mathrm{d}x \\
& \quad+ Cq^{p-1}\|v\|_{L^{p^{\star}}(\Omega\cap supp(\xi))}^{p^{\star}-p}\left(\int_{\Omega} [\xi F(v)]^{p^{\star}} \mathrm{d}x\right)^\frac{p}{p^{\star}}\nonumber \\
& \quad+Cq^{p-1}\|u\|_{L^{p^{\star}}(\Omega\cap supp(\xi))}^{p^{\star}-p} \left(\int_{\Omega} [\xi F(u)]^{p^{\star}} \mathrm{d}x\right)^\frac{p}{p^{\star}}.\nonumber
\end{align}
By \eqref{eq10.28.3} and \eqref{eq10.28.3-}, we have
\begin{align}\label{loc_bound}
    & \quad \left( \int_{\Omega} |\xi F(u)|^{p^{\star}}\mathrm{d}x  \right)^\frac{p}{p^{\star}} +\left( \int_{\Omega} |\xi F(v)|^{p^{\star}}\mathrm{d}x  \right)^\frac{p}{p^{\star}}\\
    &\leq C \int_{\Omega}  [F(u) |\nabla \xi|]^p \mathrm{d}x+  C \int_{\Omega}  [F(v) |\nabla \xi|]^p \mathrm{d}x \nonumber\\
    & \quad+ Cq^{p-1}\|u\|_{L^{p^{\star}}(\Omega\cap supp(\xi))}^{p^{\star}-p}\left(\int_{\Omega} [\xi F(u)]^{p^{\star}} \mathrm{d}x\right)^\frac{p}{p^{\star}}\nonumber \\
& \quad+Cq^{p-1}\|v\|_{L^{p^{\star}}(\Omega\cap supp(\xi))}^{p^{\star}-p} \left(\int_{\Omega} [\xi F(v)]^{p^{\star}} \mathrm{d}x\right)^\frac{p}{p^{\star}}.\nonumber
\end{align}
Since $u, v\in L^{p^{\star}}$, for any $x_0\in\Omega$, we can choose $B(x_0,2R)\subsetneq\Omega$ such that
$$C\|u\|_{L^{p^{\star}}(B(x_0,2R))}^{p^{\star}-p}<\frac{1}{2q^{p-1}}, \,\,\,\,\, C\|v\|_{L^{p^{\star}}(B(x_0,2R))}^{p^{\star}-p} <\frac{1}{2q^{p-1}}$$
and $supp(\xi) \subset B(x_0, 2R)$. From \eqref{loc_bound}, we get
\begin{align}\label{eq10.28.4-}
& \quad \left( \int_{\Omega} |\xi F(u)|^{p^{\star}}\mathrm{d}x  \right)^\frac{p}{p^{\star}} +\left( \int_{\Omega} |\xi F(v)|^{p^{\star}}\mathrm{d}x  \right)^\frac{p}{p^{\star}}\\
 &\ \ \ \leq C \int_{\Omega} \left( [F(u) |\nabla \xi|]^p + [F(v) |\nabla \xi|]^p \right)\mathrm{d}x \nonumber.
\end{align}
Letting $k\to\infty$ and taking the limit, we conclude
\begin{align*}
    &\quad \left( \int_{\Omega} \xi^{p^{\star}} u^{qp^{\star}} \mathrm{d}x  \right)^\frac{p}{p^{\star}}+ \left( \int_{\Omega} \xi^{p^{\star}} v^{qp^{\star}} \mathrm{d}x  \right)^\frac{p}{p^{\star}}\\
    &\ \ \ \leq C \int_{\Omega} (|\nabla \xi|^p u^{qp} + |\nabla \xi|^p v^{qp})\mathrm{d}x
\end{align*}
for any $q>1$.  % with $q p \leq p^{\star}$.
Noting that $\frac{p}{p^{\star}}<1$, we have
\begin{align}\label{eq10.28.4}
\left( \int_{\Omega} \xi^{p^{\star}}( u^{qp^{\star}} +v^{qp^{\star}} )\mathrm{d}x  \right)^\frac{p}{p^{\star}} &\leq \left( \int_{\Omega} \xi^{p^{\star}} u^{qp^{\star}} \mathrm{d}x  \right)^\frac{p}{p^{\star}}+ \left( \int_{\Omega} \xi^{p^{\star}} v^{qp^{\star}} \mathrm{d}x  \right)^\frac{p}{p^{\star}}\\
& \leq C \int_{\Omega} |\nabla \xi|^p (u^{qp} +v^{qp})\mathrm{d}x.\nonumber
\end{align}
Therefore, for any $R \leq h < h^\prime < 2R$, we can choose $\xi=1$ in $B(x_0,h)$, and $\xi=0$ in $\Omega\setminus B(x_0,h')$, $0\leq \xi \leq 1$ and $|\nabla\xi|<\frac{2}{h^\prime-h}$, and hence deduce from \eqref{eq10.28.4} that
\begin{align*}
\left( \int_{B(x_0,h)} (u^{qp^{\star}} +v^{qp^{\star}}) \mathrm{d}x  \right)^\frac{p}{p^{\star}}
\leq  \frac{C}{(h^\prime-h)^p} \int_{B(x_0,h^\prime)} (u^{qp}+ v^{qp})\mathrm{d}x,\,\,\,\,\,\,\forall \,\,q>1,
\end{align*}
which implies
\begin{align}\label{eq10.28.5}
\left( \int_{B(x_0,h)} (u^{qp^{\star}} +v^{qp^{\star}}) \mathrm{d}x  \right)^\frac{1}{qp^{\star}}
\leq  \left( \frac{C}{h^\prime-h} \right)^\frac{1}{q} \left( \int_{B(x_0,h^\prime)} (u^{qp}+ v^{qp}) \mathrm{d}x  \right)^\frac{1}{qp},\,\,\,\,\,\,\forall \,\,q>1.
\end{align}

Let $\chi:=\frac{p^{\star}}{p}=\frac{N}{N-p}$, and define $r_{i}:=R+\frac{2R}{2^i}$ for $i=1,2,\cdots$. Then $r_{i}-r_{i+1}=\frac{R}{2^{i}}$. By taking $h^\prime=r_{i}$, $h=r_{i+1}$ and $q=\chi^i$ in \eqref{eq10.28.5}, we get
\begin{align}\label{eq10.28.6}
\left( \int_{B(x_0,r_{i+1})} (u^{p\chi^{i+1}}+v^{p\chi^{i+1}}) \mathrm{d}x  \right)^\frac{1}{p\chi^{i+1}}
\leq  \left( \frac{C 2^{i}}{R} \right)^\frac{1}{\chi^i} \left( \int_{B(x_0,r_{i})} (u^{p\chi^i}+v^{p\chi^i}) \mathrm{d}x  \right)^\frac{1}{p\chi^i}.
\end{align}
By iteration, we infer that
\begin{align*}
\left( \int_{B(x_0,r_{i+1})} (u^{p\chi^{i+1}}+v^{p\chi^{i+1}}) \mathrm{d}x  \right)^\frac{1}{p\chi^{i+1}}
&\leq  2^\frac{i}{\chi^i} \left( \frac{C}{R} \right)^\frac{1}{\chi^i} \left( \int_{B(x_0,r_{i})} (u^{p\chi^i}+v^{p\chi^i}) \mathrm{d}x  \right)^\frac{1}{p\chi^i} \\
&\leq  2^{\sum_{k=1}^i \frac{k}{\chi^k}} \left( \frac{C}{R} \right)^{\sum_{k=1}^i\frac{1}{\chi^k}} \left( \int_{B(x_0,2R)} (u^{p^{\star}}+v^{p^{\star}}) \mathrm{d}x  \right)^\frac{1}{p^{\star}}. \nonumber
\end{align*}
Obviously, we have
$$\left( \int_{B(x_0,r_{i+1})} u^{p\chi^{i+1}} \mathrm{d}x  \right)^\frac{1}{p\chi^{i+1}} \leq\left( \int_{B(x_0,r_{i+1})} (u^{p\chi^{i+1}}+v^{p\chi^{i+1}}) \mathrm{d}x  \right)^\frac{1}{p\chi^{i+1}},$$
$$\left( \int_{B(x_0,2R)} (u^{p^{\star}}+v^{p^{\star}}) \mathrm{d}x  \right)^\frac{1}{p^{\star}} \leq \|u\|_{L^{p^{\star}}(B(x_0,2R))}+\|v\|_{L^{p^{\star}}(B(x_0,2R))}.$$
So we get
\begin{align}\label{eq10.28.7}
    &\quad \left( \int_{B(x_0,r_{i+1})} u^{p\chi^{i+1}} \mathrm{d}x  \right)^\frac{1}{p\chi^{i+1}} \\
    &\leq 2^{\sum_{k=1}^i \frac{k}{\chi^k}} \left( \frac{C}{R} \right)^{\sum_{k=1}^i\frac{1}{\chi^k}} \left( \|u\|_{L^{p^{\star}}(B(x_0,2R))}+\|v\|_{L^{p^{\star}}(B(x_0,2R))}\right)\nonumber
\end{align}
Noting that
$$ \sum_{k=1}^\infty \frac{k}{\chi^k} <\infty, $$
and
$$ \sum_{k=1}^\infty \frac{1}{\chi^k} =\frac{1}{\chi} \frac{1}{1-\frac{1}{\chi}} = \frac{N-p}{p}, $$
we can let $i$ sufficiently large (depending on $r$) in \eqref{eq10.28.7}, and obtain that, for any $p^{\ast}\leq r<+\infty$,
\begin{align}\label{eq10.28.9}
\|u\|_{L^r(B(x_0,R))}
&\leq \frac{C}{R^\frac{N-p}{p}} \left(\|u\|_{L^{p^{\star}}(B(x_0,2R))}+\|v\|_{L^{p^{\star}}(B(x_0,2R))}\right)\\
&\leq \frac{C}{R^\frac{N-p}{p}} \left(\|u\|_{L^{p^{\star}}(\R^N)}+\|v\|_{L^{p^{\star}}(\R^N)}\right),\nonumber
\end{align}
where $R\rightarrow0$ as $r\rightarrow+\infty$, and the constant $C>0$ is independent of $r$ and $R$. So we obtain from finite coving theorem that
\begin{align}\label{eq10.28.10}
\|u\|_{L^r(\Omega)}
\leq C (|\Omega|) \,\left(\|u\|_{L^{p^{\star}}(\R^N)}+\|v\|_{L^{p^{\star}}(\R^N)}\right), \,\,\,\,\,\,\, \forall\,\, \Omega \subset\subset \R^N.
\end{align}
Thus $u\in L^{r}_{loc}(\mathbb{R}^{N})$ for any $0<r<+\infty$. Arguing in the same way for $v$, we can also obtain $v\in L^{r}_{loc}(\mathbb{R}^{N})$ for any $0<r<+\infty$.

By using the local boundedness and regularity estimates (c.f. e.g. Chapter 4 in \cite{HL}, see also \cite{ED83,KM,PT,XCL}), we can deduce that $u,v\in C(\R^N)\cap L_{loc}^\infty(\R^N)$. Furthermore, by using the standard $C_{loc}^{1,\alpha}$-regularity estimates (c.f. e.g. Chapter 4 in \cite{HL}, see also \cite{ED83,KM,PT,XCL}), we can deduce that $u,v\in C_{loc}^{1,\alpha}(\R^N)$ for some $0<\alpha<\min\{1,\frac{1}{p-1}\}$. This completes our proof of Lemma \ref{lm.8}.
\end{proof}

\subsection{A better preliminary estimate on upper bound of asymptotic behavior}\label{sc3.2}

The following result provides a better preliminary decay estimate for solution of the generalized equation \eqref{eq10.25.1}, which is not sharp, but this estimate plays a key role in the proof of Theorem \ref{th2.1-}.

\begin{lem}\label{lm.5}
Let $(u, v)$ be a pair of positive solutions to the equation \eqref{eq10.25.1} with $p>1$. Then, there is a $\sigma>0$ and a constant $C>0$ such that
$$ u(x) , v(x) \leq\frac{C}{1+|x|^{\frac{N-p}{p}+\sigma}} \quad \,\,\, \mbox{in}\,\, \R^N.$$
\end{lem}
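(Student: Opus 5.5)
Following \cite{DLL}, the approach is to run the Moser iteration of Lemma \ref{lm.8} near infinity rather than near a point, and to exploit the smallness of the tails of $u,v$ in $L^{p^\star}$ together with the scaling invariance. Since $u,v$ are continuous by Lemma \ref{lm.8}, the bound is trivial on any compact set. It therefore suffices to prove
\[u(x),v(x)\le C|x|^{-\frac{N-p}{p}-\sigma}\quad\text{for } |x|\ge R_1 .\]

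\emph{Step 1: uniform $L^\infty$ control from local $L^{p^\star}$ mass.} For $|x_0|=R$ large, I repeat the computation leading to \eqref{loc_bound} on the ball $B(x_0,R/2)$, with test functions $\xi^pG(u)$ and $\xi^pG(v)$. Since $\|u\|_{L^{p^\star}(\R^N\setminus B_{R/2})}\to0$ as $R\to\infty$, and likewise for $v$, the absorption condition holds on the whole ball $B(x_0,R/2)$ once $R$ is large. The smallness is needed for the first step $q$ of the iteration; for the later steps I absorb using the $L^r$ bounds already obtained, exactly as in the standard Moser scheme. The iteration \eqref{eq10.28.6}, run with radii comparable to $R$, then gives
\[\sup_{B(x_0,R/4)}(u+v)\le C R^{-\frac{N-p}{p}}\big(\|u\|_{L^{p^\star}(B(x_0,R/2))}+\|v\|_{L^{p^\star}(B(x_0,R/2))}\big).\]
This is consistent with the scaling $\lambda^{\frac{N-p}{p}}u(\lambda\cdot)$. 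It already yields $u,v=o(|x|^{-\frac{N-p}{p}})$, and in particular $u,v\in L^\infty(\R^N)$.

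\emph{Step 2: extract the extra decay $\sigma$.} By Step 1 it is enough to show polynomial decay of the tail energy,
\[\int_{\R^N\setminus B_R}(u^{p^\star}+v^{p^\star})\le CR^{-\delta}\]
for some $\delta>0$, which gives $\sigma=\delta/p^\star$. To get this, I test \eqref{eq10.25.1} with $\eta_R^p u$ and $\eta_R^p v$, where $\eta_R$ is a cutoff vanishing on $B_R$ and equal to $1$ outside $B_{2R}$, and I use Sobolev's inequality on $\eta_R u$ and $\eta_R v$. Young's inequality gives $u^\alpha v^\beta u+u^\beta v^\alpha v\le C(u^{p^\star}+v^{p^\star})$. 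The nonlinear term is therefore bounded by
\[C\,\varepsilon(R)\,\Big(\|\eta_Ru\|_{L^{p^\star}}^p+\|\eta_Rv\|_{L^{p^\star}}^p\Big),\qquad \varepsilon(R)=\|u\|^{p^\star-p}_{L^{p^\star}(B_R^c)}+\|v\|^{p^\star-p}_{L^{p^\star}(B_R^c)}\to0,\]
and so can be absorbed. This leaves an inequality of the form
\[E(2R)\le C\big(E(R)-E(2R)\big),\qquad E(R):=\int_{B_R^c}(|\nabla u|^p+|\nabla v|^p+u^{p^\star}+v^{p^\star}),\]
after using Hölder's inequality on the annulus for the term $\int|\nabla\eta_R|^p u^p$, whose size is bounded by $\|u\|^p_{L^{p^\star}(B_{2R}\setminus B_R)}$. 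This hole-filling inequality gives $E(2R)\le\theta E(R)$ with $\theta=C/(C+1)<1$. Iterating over dyadic radii yields $E(R)\le CR^{-\delta}$, and combined with Step 1 this proves the lemma.

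The main obstacle is making the hole-filling in Step 2 rigorous. One must control the cross term $p\int\eta_R^{p-1}u|\nabla u|^{p-1}|\nabla\eta_R|$ by the annular quantities only, so that the constant $C$ is independent of $R$. One must also make sure the absorption of the coupled nonlinearity does not destroy this independence. If this gap arises, I would instead bound the annular terms by $E(R)-E(2R)$ plus the absorbed Sobolev piece, which should still close the estimate.
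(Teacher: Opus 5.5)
Your Step 1 is fine, and it differs from the paper in a useful way. The paper runs the Moser iteration on exterior domains $\R^N\setminus B_r$, which gives an $L^{\overline p}$ tail bound with decay rate $R^{-\frac{N-p}{p}+o_{\overline p}(1)}$, and then applies local boundedness on unit balls. Your rescaled $\varepsilon$-regularity estimate on $B(x_0,R/2)$ works as follows: smallness is used for one fixed step, after which $u^{p^\star-p}+v^{p^\star-p}\in L^s$ with $s>N/p$ and standard Moser applies. It gives the factor $R^{-\frac{N-p}{p}}$ with no loss, so you avoid tuning $\overline p$ against $\sigma$.

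The gap is in Step 2, and it is not the cross term you flagged. Young's inequality, $p\eta_R^{p-1}u|\nabla u|^{p-1}|\nabla\eta_R|\le\frac12\eta_R^p|\nabla u|^p+Cu^p|\nabla\eta_R|^p$, absorbs it with an $R$-independent constant. The absorption of the nonlinearity is also uniform, since $\varepsilon(R)$ is decreasing. The real problem is the homogeneity of your functional $E$. The energy estimate plus Hölder on the annulus gives
\[
\int_{\R^N\setminus B_{2R}}\big(|\nabla u|^p+|\nabla v|^p\big)\le C\Big(\int_{B_{2R}\setminus B_R}\big(u^{p^\star}+v^{p^\star}\big)\Big)^{p/p^\star}.
\]
Since $p/p^\star<1$, the right side is not bounded by $C(E(R)-E(2R))$ when the annular mass is small. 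What you actually get is $E(2R)^{p^\star/p}\le C\,(E(R)-E(2R))$. Iterated over dyadic radii, this only yields logarithmic decay of $E$, not a power $R^{-\delta}$. Your fallback does not fix this, because the obstruction is the exponent mismatch, not the size of the constants.

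The remedy is to drop the gradient terms and run the hole-filling on $\Psi(R)=\int_{\R^N\setminus B_R}(u^{p^\star}+v^{p^\star})$ alone, keeping everything at the level of norms. Sobolev plus the absorbed energy inequality give
\[
\|\eta_Ru\|_{L^{p^\star}}+\|\eta_Rv\|_{L^{p^\star}}\le C\big(\|u\|_{L^{p^\star}(B_{2R}\setminus B_R)}+\|v\|_{L^{p^\star}(B_{2R}\setminus B_R)}\big).
\]
Raising this to the power $p^\star$ gives the homogeneous inequality $\Psi(2R)\le C(\Psi(R)-\Psi(2R))$. Hence $\Psi(2R)\le\frac{C}{C+1}\Psi(R)$ and $\Psi(R)\le CR^{-\delta}$. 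This is exactly the paper's argument in \eqref{eq10.26.9}--\eqref{eq10.26.13}. With it, your Step 1 closes the proof with $\sigma=\delta/p^\star$.
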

\begin{proof}
%The result is known. For completeness and to apply it in the following, we give a proof of this Lemma.
Let $R_0>0$ large (to be determined later) and $q>1$. For any $R>r>R_0$, take $\xi\in C^2(\R^N)$, with $\xi=0$ in $B_r(0)$, $\xi=1$ in $\mathbb{R}^{N}\setminus B_R(0)$, $0 \leq \xi\leq 1$, and $|\nabla \xi| \leq \frac{2}{(R-r)}$. Let $\phi=\xi^p u^{1+p(q-1)}, \psi=\xi^p v^{1+p(q-1)}$. The equations \eqref{eq10.25.1} imply that
$$\int_{\R^N} |\nabla u|^{p-2} \nabla u \nabla\phi \mathrm{d}x=\int_{\R^N} u^{\alpha}v^{\beta} \phi \mathrm{d}x,$$
$$\int_{\R^N} |\nabla v|^{p-2} \nabla v \nabla\psi \mathrm{d}x=\int_{\R^N} u^{\beta}v^{\alpha} \psi \mathrm{d}x.$$
Considering the first equation by H\"{o}lder's inequality, one has
\begin{align}\label{eq10.26.1}
&\quad \int_{\R^N} u^{\alpha}v^{\beta} \phi \mathrm{d}x \\
           &\leq \int_{\R^N \cap \{u\leq v\}} v^{p^{\star}+p(q-1)} \xi^p\mathrm{d}x+\int_{\R^N \cap \{u> v\}} u^{p^{\star}+p(q-1)} \xi^p\mathrm{d}x \nonumber \\
           &\leq \int_{\R^N \setminus B_{R_0}} v^{p^{\star}-p}v^{pq}\xi^p \mathrm{d}x+\int_{\R^N \setminus B_{R_0}}u^{p^{\star}-p}u^{pq} \xi^p \mathrm{d}x \nonumber \\
           &\leq \|u\|_{L^{p^{\star}}(\R^N \setminus B_{R_0})}^{p^{\star}-p}\left(\int_{\R^N \setminus B_{R_0}} |\xi u^q|^{p^{\star}} \mathrm{d}x \right)^\frac{p}{p^{\star}}+\|v\|_{L^{p^{\star}}(\R^N \setminus B_{R_0})}^{p^{\star}-p}\left(\int_{\R^N \setminus B_{R_0}} |\xi v^q|^{p^{\star}} \mathrm{d}x \right)^\frac{p}{p^{\star}} \nonumber
\end{align}
On the other hand, noticing that
\begin{align*}
    |\nabla(\xi u^q)|^p &=|\nabla \xi u^q+ qu^{q-1}\nabla u\xi|^p\\
    &\leq C|\nabla \xi|^pu^{pq}+C q^p u^{p(q-1)}|\nabla u|^p \xi^p.
\end{align*}
Then we have
\begin{align}\label{eq10.26.2}
&\quad \int_{\R^N}|\nabla u|^{p-2} \nabla u \nabla\phi \mathrm{d}x\\
           &= (1+p(q-1))\int_{\R^N \setminus B_{R_0}}u^{p(q-1)} |\nabla u|^p \xi^p \mathrm{d}x + p \int_{\R^N \setminus B_{R_0}} \xi^{p-1} u^{1+p(q-1)} |\nabla u|^{p-2} \nabla u \nabla\xi \mathrm{d}x \nonumber\\
           &\geq (1+p(q-1))\int_{\R^N \setminus B_{R_0}}u^{p(q-1)} |\nabla u|^p \xi^p \mathrm{d}x - p \int_{\R^N \setminus B_{R_0}} \xi^{p-1} u^{(p-1)(q-1)+q} |\nabla u|^{p-1} |\nabla\xi| \mathrm{d}x \nonumber\\
           &\geq (1+p(q-1))\int_{\R^N \setminus B_{R_0}}u^{p(q-1)} |\nabla u|^p \xi^p \mathrm{d}x \nonumber\\
           &\quad- p \left(\varepsilon\int_{\R^N \setminus B_{R_0}} u^{p(q-1)} |\nabla u|^p \xi^p \mathrm{d}x+C_\varepsilon\int_{\R^N} |\nabla \xi|^p u^{pq} \mathrm{d}x \right) \nonumber\\
           &\geq \frac{C_1}{q^{p-1}} \int_{\R^N} |\nabla(\xi u^q)|^p \mathrm{d}x - C\int_{\R^N} |\nabla \xi|^p u^{pq} \mathrm{d}x \nonumber\\
           &\geq \frac{C_2}{q^{p-1}} \left(\int_{\R^N \setminus B_{R_0}} |\xi u^q|^{p^{\star}} \mathrm{d}x \right)^\frac{p}{p^{\star}} - C\int_{\R^N \setminus B_{R_0}} |\nabla \xi|^p u^{pq} \mathrm{d}x. \nonumber
\end{align}
It follows from \eqref{eq10.26.1} with \eqref{eq10.26.2} that
\begin{align}\label{eq10.26.3}
&\quad\frac{C_2}{q^{p-1}} \left(\int_{\R^N \setminus B_{R_0}} |\xi u^q|^{p^{\star}} \mathrm{d}x \right)^\frac{p}{p^{\star}} - C\int_{\R^N \setminus B_{R_0}} |\nabla \xi|^p u^{pq} \mathrm{d}x \\
&\leq \|u\|_{L^{p^{\star}}(\R^N \setminus B_{R_0})}^{p^{\star}-p}\left(\int_{\R^N \setminus B_{R_0}} |\xi u^q|^{p^{\star}} \mathrm{d}x \right)^\frac{p}{p^{\star}}+\|v\|_{L^{p^{\star}}(\R^N \setminus B_{R_0})}^{p^{\star}-p}\left(\int_{\R^N \setminus B_{R_0}} |\xi v^q|^{p^{\star}} \mathrm{d}x \right)^\frac{p}{p^{\star}} . \nonumber
\end{align}
Similarly, we get
\begin{align}\label{eq10.26.3-}
&\quad\frac{C'_2}{q^{p-1}} \left(\int_{\R^N \setminus B_{R_0}} |\xi v^q|^{p^{\star}} \mathrm{d}x \right)^\frac{p}{p^{\star}} - C'\int_{\R^N \setminus B_{R_0}} |\nabla \xi|^p v^{pq} \mathrm{d}x \\
&\leq \|v\|_{L^{p^{\star}}(\R^N \setminus B_{R_0})}^{p^{\star}-p}\left(\int_{\R^N \setminus B_{R_0}} |\xi v^q|^{p^{\star}} \mathrm{d}x \right)^\frac{p}{p^{\star}}+\|u\|_{L^{p^{\star}}(\R^N \setminus B_{R_0})}^{p^{\star}-p}\left(\int_{\R^N \setminus B_{R_0}} |\xi u^q|^{p^{\star}} \mathrm{d}x \right)^\frac{p}{p^{\star}} . \nonumber
\end{align}
Take $R_0$ sufficiently large such that
$$\|u\|_{L^{p^{\star}}(\R^N \setminus B_{R_0})}^{p^{\star}-p}\leq \frac{C_2}{4q^{p-1}}, \,\,\,\|v\|_{L^{p^{\star}}(\R^N \setminus B_{R_0})}^{p^{\star}-p} \leq \frac{C'_2}{4q^{p-1}}.$$
Rescaling the constants and combining \eqref{eq10.26.3} with \eqref{eq10.26.3-}, we can obtain that
\begin{align}\label{eq10.26.4}
&\quad \left(\int_{\R^N \setminus B_{R_0}} |\xi u^q|^{p^{\star}} \mathrm{d}x \right)^\frac{p}{p^{\star}} +\left(\int_{\R^N \setminus B_{R_0}} |\xi v^q|^{p^{\star}} \mathrm{d}x \right)^\frac{p}{p^{\star}}\\
&\leq C q^{p-1} \int_{\R^N \setminus B_{R_0}} |\nabla \xi|^p (u^{pq}+ v^{pq})\mathrm{d}x,\nonumber
\end{align}
which implies
\begin{align}\label{eq10.26.8}
&\quad \left(\int_{\R^N \setminus B_{R}} \left(u^{qp^{\star}}+v^{qp^{\star}}\right) \mathrm{d}x \right)^\frac{1}{q p^{\star}}\\
&\leq C q^\frac{p-1}{pq} \left(\frac{2}{R-r}\right)^\frac{1}{q} \left(\int_{\R^N \setminus B_{r}} (u^{pq}+v^{pq}) \mathrm{d}x\right)^\frac{1}{pq},\nonumber
\end{align}
for any $R>r>R_0$.

Let $\chi=\frac{p^{\star}}{p}=\frac{N}{N-p}>1$. For any $R>R_0$, define $r_i = 2R - \frac{2R}{2^i}$ for $i=1,2,\cdots$. Then $r_{i+1}-r_i= \frac{2R}{2^i} -\frac{2R}{2^{i+1}}=\frac{R}{2^i}$. Taking $R=r_{i+1}$, $r=r_i$ and $q=\chi^i$ in \eqref{eq10.26.8}, we derive
\begin{align}\label{eq10.26.5}
&\quad \left(\int_{\R^N \setminus B_{r_{i+1}}} (u^{\chi^{i+1}p}+v^{\chi^{i+1}p}) \mathrm{d}x \right)^\frac{1}{\chi^{i+1} p}\\
& \leq C (\chi^i)^\frac{p-1}{p\chi^i} \left(\frac{2}{r_{i+1}-r_i}\right)^\frac{1}{\chi^i} \left(\int_{\R^N \setminus B_{r_i}} (u^{p \chi^i}+v^{p \chi^i}) \mathrm{d}x\right)^\frac{1}{p\chi^i}\nonumber \\
& \leq C \chi^{ \frac{p-1}{p} \frac{i}{\chi^i} } \left(\frac{2^i}{R}\right)^\frac{1}{\chi^i} \left(\int_{\R^N \setminus B_{r_i}} (u^{p \chi^i}+v^{p \chi^i}) \mathrm{d}x\right)^\frac{1}{p\chi^i} \nonumber
\end{align}
for any $R>R_0$. By iteration, we can obtain from \eqref{eq10.26.5} that
\begin{align}\label{eq10.26.6}
&\quad \left(\int_{\R^N \setminus B_{r_{i+1}}} |u|^{\chi^{i+1} p} \mathrm{d}x \right)^\frac{1}{\chi^{i+1} p}+\left(\int_{\R^N \setminus B_{r_{i+1}}} |v|^{\chi^{i+1} p} \mathrm{d}x \right)^\frac{1}{\chi^{i+1} p} \\
& \leq 2\left(\int_{\R^N \setminus B_{r_{i+1}}} (|u|^{\chi^{i+1}p}+|v|^{\chi^{i+1}p}) \mathrm{d}x \right)^\frac{1}{\chi^{i+1} p} \nonumber \\
& \leq C \prod^i_{k=1} \left[ \chi^{ \frac{p-1}{p} \frac{k}{\chi^k} } \left(\frac{2^k}{R}\right)^\frac{1}{\chi^k} \right] \left(\int_{\R^N \setminus B_{R}} (u^{p \chi}+v^{p \chi}) \mathrm{d}x\right)^\frac{1}{p\chi} \nonumber\\
& \leq C   \left( 2 \chi^\frac{p-1}{p}\right)^{\sum\limits^i_{k=1}\frac{k}{\chi^k} } \left(\frac{1}{R}\right)^{\sum\limits^i_{k=1}\frac{1}{\chi^k}}
\left(\int_{\R^N \setminus B_{R}} (u^{p^{\star}}+v^{p^{\star}}) \mathrm{d}x\right)^\frac{1}{p^{\star}} \nonumber\\
& \leq C   \left( 2 \chi^\frac{p-1}{p}\right)^{\sum\limits^i_{k=1}\frac{k}{\chi^k} } \left(\frac{1}{R}\right)^{\sum\limits^i_{k=1}\frac{1}{\chi^k}}(\left\| u \|_{L^{p^{\star}}(\R^N \setminus B_{R})}+\| v\|_{L^{p^{\star}}(\R^N \setminus B_{R})}\right)\nonumber
\end{align}
for any $R>R_0$. Since
$$\sum_{i=1}^\infty \frac{k}{\chi^k} < + \infty \qquad \text{and} \qquad \sum_{k=1}^i \frac{1}{\chi^k} < \sum_{k=1}^\infty \frac{1}{\chi^k}=\frac{1}{\chi}\frac{1}{1-\chi}=\frac{N-p}{p},$$
we have proved that for any $\overline{p}>p^{\star}$, there is a $R_0>0$ large, depending on $\overline{p}$, such that
\begin{align}\label{eq10.26.7}
&\quad \| u \|_{L^{\overline{p}}(\R^N \setminus B_{2R})}+\| v \|_{L^{\overline{p}}(\R^N \setminus B_{2R})} \\ &\leq \frac{C}{R^{ \frac{N-p}{p}-o_{\overline{p}}(1)} }
(\| u \|_{L^{p^{\star}}(\R^N \setminus B_{R})}+\| v\|_{L^{p^{\star}}(\R^N \setminus B_{R})}),\,\quad\,\,\, \,\forall \,\, R>R_0, \nonumber
\end{align}
where $o_{\overline{p}}(1)\to 0$ as $\overline{p} \to \infty$.

\smallskip

Next we estimate $\| u \|_{L^{p^{\star}}(\R^N \setminus B_{R})}$ and $\| v\|_{L^{p^{\star}}(\R^N \setminus B_{R})}$.

Take $\eta=\xi^p u$, where $\xi$ satisfies $\xi=0$ in $B_R$, $\xi=1$ in $\R^N \setminus B_{2R}$, $0\leq \xi\leq 1$, and $|\nabla \xi| \leq 2R^{-1}$. Then, similar to \eqref{eq10.26.4}, we can derive that
\begin{align}\label{eq10.26.9}
&\quad \left(\int_{\R^N } |\xi u|^{p^{\star}} \mathrm{d}x \right)^\frac{1}{p^{\star}}+\left(\int_{\R^N } |\xi v|^{p^{\star}} \mathrm{d}x \right)^\frac{1}{p^{\star}} \\
&\leq C  \left( \int_{\R^N} |\nabla \xi|^p u^{p} \mathrm{d}x \right)^\frac{1}{p}+ C \left( \int |\nabla \xi|^p  v^p dx \right)^{1/p} \nonumber \\
&= C \left( \int_{B_{2R}\setminus B_R} |\nabla \xi|^p u^{p} \mathrm{d}x \right)^\frac{1}{p}+C \left( \int_{B_{2R}\setminus B_R} |\nabla \xi|^p v^{p} \mathrm{d}x \right)^\frac{1}{p} \nonumber \\
& \leq C \left( \frac{1}{R^p} \int_{B_{2R}\setminus B_R} u^{p} \mathrm{d}x \right)^\frac{1}{p}+C \left( \frac{1}{R^p} \int_{B_{2R}\setminus B_R} v^{p} \mathrm{d}x \right)^\frac{1}{p} \nonumber\\
& \leq C \left[\frac{1}{R^p}\left(\int_{B_{2R}\setminus B_R} 1 \mathrm{d}x\right)^\frac{p}{N}\left(\int_{B_{2R}\setminus B_R} u^{p\frac{N}{N-p}} \mathrm{d}x\right)^\frac{N-p}{N} \right]^\frac{1}{p} \nonumber\\
& \quad+\left[\frac{1}{R^p}\left(\int_{B_{2R}\setminus B_R} 1 \mathrm{d}x\right)^\frac{p}{N}\left(\int_{B_{2R}\setminus B_R} v^{p\frac{N}{N-p}} \mathrm{d}x\right)^\frac{N-p}{N} \right]^\frac{1}{p} \nonumber\\
&=C \left(\int_{B_{2R}\setminus B_R} u^{p^{\star}} \mathrm{d}x \right)^\frac{1}{p^{\star}}+C \left(\int_{B_{2R}\setminus B_R} v^{p^{\star}} \mathrm{d}x \right)^\frac{1}{p^{\star}}. \nonumber
\end{align}
Thus we have
\begin{align}\label{eq10.26.10}
&\quad \int_{\R^N \setminus B_{2R}} |u|^{p^{\star}} + |v|^{p^{\star}} \mathrm{d}x \\
&\leq \int_{\R^N } |\xi u|^{p^{\star}}+|\xi v|^{p^{\star}} \mathrm{d}x \nonumber \\
&\leq C \int_{B_{2R}\setminus B_R} |u|^{p^{\star}}+|v|^{p^{\star}} \mathrm{d}x \nonumber \\
&= C \int_{\R^N \setminus B_R} |u|^{p^{\star}}+|v|^{p^{\star}} \mathrm{d}x - C \int_{\R^N \setminus B_{2R}} |u|^{p^{\star}}+|v|^{p^{\star}} \mathrm{d}x, \nonumber
\end{align}
which implies
\begin{align}\label{eq10.26.11}
\int_{\R^N \setminus B_{2R}} |u|^{p^{\star}}+|v|^{p^{\star}} \mathrm{d}x
&\leq  \frac{C}{C+1} \int_{\R^N \setminus B_R} |u|^{p^{\star}}+|v|^{p^{\star}} \mathrm{d}x.
\end{align}

Set $\tau:= \frac{C}{C+1}<1$, and let $\Psi(R)=\| u \|^{p^{\star}}_{L^{p^{\star}}(\R^N \setminus B_R)}+\| v \|^{p^{\star}}_{L^{p^{\star}}(\R^N \setminus B_R)}$. So by \eqref{eq10.26.11},
\begin{align}\label{eq10.26.12}
\Psi(2R)\leq \tau \Psi(R),\,\,\quad \forall \,\, R\geq R_0,
\end{align}
from which we get
\begin{align}\label{eq10.26.13}
\Psi(2^k R_0)\leq \tau^k \Psi(R_0).
\end{align}
For any $|x|\geq R_0$, there is a $k\in \N^+$, such that
$$ 2^k R_0 \leq |x| \leq 2^{k+1} R_0,$$
which combined with the definition of $\Psi(R)$ and \eqref{eq10.26.13} yield that
$$ \Psi(|x|) \leq \Psi(2^k R_0)\leq \tau^k \Psi(R_0) \leq \tau^{\log_2 |x|-\log_2 (2R_{0})} \Psi(R_0).$$
Since
$$ \tau^{\log_2 |x|}= 2^{\log_2\tau \cdot \log_2 |x|} = |x|^{\log_2\tau},$$
we obtain
$$ \Psi(|x|) \leq \frac{C}{|x|^{\log_2 \frac{1}{\tau}}},$$
where $\log_2 \frac{1}{\tau} >0$.
Thus, we have shown that there exists a $\sigma>0$ independent of $\overline{p}>p^{\star}$ such that
$$ \Psi(|x|) \leq \frac{C}{|x|^\sigma}.$$
By rescaling the constants and fixing $\overline{p}>p^{\star}$ sufficiently large such that $o_{\overline{p}}(1)<\frac{\sigma}{2p^{\star}}$, we obtain from \eqref{eq10.26.7} that
\begin{align}\label{eq10.26.17}
\| u \|_{L^{\overline{p}}(\R^N \setminus B_{2R})}+\| v \|_{L^{\overline{p}}(\R^N \setminus B_{2R})} \leq \frac{C}{R^{ \frac{N-p}{p}+\frac{\sigma}{2p^{\star}}}},\,\quad\,\forall \,\,R>R_0.
\end{align}
Consequently, there exists $q>\frac{N}{p}$ and $\widetilde{R}>0$ large enough, such that
\begin{equation*}
  \|u\|^{p^{\star}-p}_{L^{q(p^{\star}-p)}(B_{1}(x))}+\|v\|^{p^{\star}-p}_{L^{q(p^{\star}-p)}(B_{1}(x))}\leq \overline{C}, \qquad \forall \,\, |x|>2\widetilde{R},
\end{equation*}
where the constant $\overline{C}>0$ is independent of $x$ and $\widetilde{R}$. Therefore, for any $x$ such that $|x|=4R$ with $R>\max\{R_0,\widetilde{R}\}>1$, by the local boundedness estimates (c.f. e.g. Chapter 4 in \cite{HL}, see also \cite{ED83,KM,PT,XCL}) and the decay property in \eqref{eq10.26.17}, we have, for any fixed $\bar{p}\geq\hat{p}$ large enough,
$$ |u(x)|\leq \max_{y\in B_{\frac{1}{2}}(x)} |u(y)| \leq \widetilde{C}\|u\|_{L^{\overline{p}}(B_1(x))}\leq \widetilde{C}\|u\|_{L^{\overline{p}}(\R^N \setminus B_{2R}(0))} \leq \frac{\widetilde{C}C}{R^{ \frac{N-p}{p}+\frac{\sigma}{2p^{\star}}}},$$
$$ |v(x)|\leq \max_{y\in B_{\frac{1}{2}}(x)} |v(y)| \leq \widetilde{C}\|v\|_{L^{\overline{p}}(B_1(x))}\leq \widetilde{C}\|v\|_{L^{\overline{p}}(\R^N \setminus B_{2R}(0))} \leq \frac{\widetilde{C}C}{R^{ \frac{N-p}{p}+\frac{\sigma}{2p^{\star}}}},$$
where the constants $C$ and $\widetilde{C}=\widetilde{C}(N,p,\overline{C})$ are independent of $x$ and $R$. Combining this with Lemma \ref{lm.8} finish our proof of Lemma \ref{lm.5}.
\end{proof}

\subsection{Proof of Theorem \ref{th2.1-}}\label{sc3.3}

We have proved in Lemmas \ref{lm.8} and \ref{lm.5} that $u, v\in L^\infty(\R^N)\cap C^{1,\alpha}(\R^N)$ and $(u,v)$ satisfy a better preliminary asymptotic upper bound estimate.

\smallskip

Now, we are ready to prove the sharp asymptotic estimates on any positive weak solutions $(u,v)$ to the system \eqref{eq1.1} and $|\nabla u|, |\nabla v|$ in Theorem \ref{th2.1-}. The proof of Theorem \ref{th2.1-} will be divided into the following three steps.

\medskip

\noindent{\bf Step 1.} We will show that there exists some constants $c_0,C_0>0$ such that
\begin{align}\label{eq10.26.57}
\frac{c_0}{1+|x|^\frac{N-p}{p-1}} \leq u(x), v(x) \leq \frac{C_0}{1+|x|^\frac{N-p}{p-1}} \,\,\,\,\,\,\,\,\mbox{in}\,\,\R^N.
\end{align}

We will show \eqref{eq10.26.57} by applying the following two key Lemmas \ref{estimate1}-\ref{estimate2}.
 \begin{lem}[Theorem 1.5 in \cite{XCL}]\label{estimate1}
Let $\Omega$ be an exterior domain in $\R^N$ such that $\Omega^c=\R^N\setminus \Omega$ is bounded and
function $V(x) \in L^\frac{N}{p}(\Omega)$
satisfying $V(x) \leq K|x|^{-\gamma}$ in $\Omega$ for some positive constants $K$ and $\gamma>p$. If $u\in D^{1,p}(\Omega)$ is a weak subsolution to
$$-\Delta_p u =V(x) u^{p-1}  \,\,\,\,\mbox{in}\, \ \Omega$$
with $1<p<N$, then there exists a positive constant $C=C(N,p,\gamma,K)>0$ such that
$$ u(x)\leq C M |x|^\frac{N-p}{p-1}\,\,\,\,\mbox{in}\,\,\, \R^N\setminus B_{R_1}(0),$$
where $M = \sup\limits_{\partial B_{R_1}(0)} u^+ $ and $R_1 > 1$ is a constant depending on $N, p, K, \gamma$.
\end{lem}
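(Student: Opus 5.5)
The displayed conclusion should read $u(x)\le CM|x|^{-\frac{N-p}{p-1}}$; it is a decay statement. I would prove it by comparing $u$ with an explicit radial supersolution on $\{|x|>R_1\}$ and then applying a comparison principle suited to the homogeneous right-hand side $V(x)u^{p-1}$.

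\textbf{Step 1 (barrier).} Set $k=\frac{N-p}{p-1}$, and note $(k+1)(p-1)=N-1$. Fix $0<\delta\le\gamma-p$ and let
$$w(r)=r^{-k}-r^{-k-\delta},\qquad r=|x|.$$
A direct computation gives
$$r^{N-1}|w'|^{p-2}w'=-k^{p-1}\bigl(1-cr^{-\delta}\bigr)^{p-1},\qquad c=1+\delta/k.$$
Differentiating,
$$-\Delta_p w=(p-1)k^{p-1}c\,\delta\,\bigl(1-cr^{-\delta}\bigr)^{p-2}r^{-N-\delta}\ \ge\ c_1r^{-N-\delta}\quad\text{for } r\ge R_1 .$$
On the other hand, $V w^{p-1}\le K r^{-\gamma}r^{-k(p-1)}=Kr^{-\gamma-N+p}$. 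Since $-N-\delta\ge-\gamma-N+p$, choosing $R_1=R_1(N,p,K,\gamma)$ large gives $-\Delta_p w\ge 2V w^{p-1}$ in $\{r>R_1\}$, with $w>0$ and $w\asymp r^{-k}$ there. This is what makes $R_1$ depend only on $N,p,K,\gamma$.

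\textbf{Step 2 (comparison).} Let $W=\frac{M}{w(R_1)}\,w$. Then $u\le W$ on $\partial B_{R_1}$. Both functions vanish at infinity in the $D^{1,p}$ sense ($u\in D^{1,p}$, and $W\in D^{1,p}$ near infinity since $k>\frac{N-p}{p}$). The inequality $-\Delta_p W\ge 2VW^{p-1}$ is preserved by the scaling. To show $u\le W$ on $\{|x|>R_1\}$, I would use the Díaz--Saa/Picone argument: test the inequality for $u$ with $\frac{(u^p-W^p)^+}{u^{p-1}}$ and the inequality for $W$ with $\frac{(u^p-W^p)^+}{W^{p-1}}$, then subtract. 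The Picone-type inequality makes the gradient terms nonnegative, so
$$0\le\int_{\{u>W\}}\Bigl(V-\frac{-\Delta_pW}{W^{p-1}}\Bigr)(u^p-W^p)\,dx\le-\int_{\{u>W\}}V\,(u^p-W^p)\,dx.$$
This is a contradiction unless $\{u>W\}$ is negligible on the support of $V$. A cleaner route avoids that case: replace the factor $2V$ by $V+\varepsilon r^{-N-\delta}$ in the barrier inequality, so the integrand is strictly negative wherever $u>W$.

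A fallback, if the Picone route gets messy, is to use that $\|V\|_{L^{N/p}(\{|x|>R_1\})}$ is small for $R_1$ large. Testing with $(u-tW)^+$ and combining Hölder with Sobolev then forces $(u-tW)^+\equiv0$ for every $t\ge1$.

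\textbf{Main obstacle.} I expect the hardest part to be justifying these test functions rigorously on the unbounded exterior domain. Near $\partial B_{R_1}$ this requires the boundary ordering. At infinity it requires truncation: insert a cutoff $\eta_R$, with error terms controlled by $\int_{B_{2R}\setminus B_R}|\nabla u|^p\to0$ and the decay of $W$. If $u$ may vanish, one also has to use regularized quotients with $u+\epsilon$. Once $u\le W$ is established, $W\le CMr^{-k}$ gives the claim with $C=C(N,p,\gamma,K)$.
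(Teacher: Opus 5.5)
The paper does not prove this lemma. It is quoted from \cite{XCL}, with the sign of the exponent garbled, as you noticed, and used as a black box in Step 1 of Section~\ref{sc3.3}. So your argument has to stand on its own, and its main line does. The barrier computation is right. The Picone/D\'{\i}az--Saa pairing gives $0\le\int_{\{u>W\}}\bigl(V-\frac{-\Delta_pW}{W^{p-1}}\bigr)(u^p-W^p)$. The truncation at infinity you worry about closes easily. Take a cutoff $\eta_R$ equal to $1$ on $B_R$, supported in $B_{2R}$, with $|\nabla\eta_R|\le C/R$, and set $A_R=B_{2R}\setminus B_R$.
\begin{itemize}
\item The error from the $u$-inequality is at most $\frac{C}{R}\int_{A_R}u^+|\nabla u|^{p-1}\le C\|u\|_{L^{p^{\star}}(A_R)}\|\nabla u\|^{p-1}_{L^p(A_R)}$.
\item The factor $M$ cancels in $W^{1-p}|\nabla W|^{p-1}\le Cr^{1-p}$, so the error from the $W$-identity is at most $CR^{-p}\int_{A_R}(u^+)^p\le C\|u\|^p_{L^{p^{\star}}(A_R)}$.
\end{itemize}
Both errors tend to $0$, and monotone convergence then gives $u\le W$ a.e. No $u+\epsilon$ regularization is needed: $(u^p-W^p)^+/u^{p-1}$ only involves $u$ on $\{u>W\}$, where $u>W>0$, and its gradient is bounded by $p(|\nabla u|+|\nabla W|)$. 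This route never uses smallness of $\|V\|_{L^{N/p}}$. The hypothesis $V\in L^{N/p}$ only makes $\int|V|(u^+)^p$ finite and lets the weak inequality extend to $W^{1,p}_0$ test functions. What drives the argument is the pointwise bound $V\le K|x|^{-\gamma}$ with $\gamma>p$.

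Some points need fixing.
\begin{itemize}
\item Take $0<\delta<\gamma-p$ strictly. At $\delta=\gamma-p$, the terms $c_1r^{-N-\delta}$ and $Kr^{-\gamma-N+p}$ carry the same power, so the comparison is between constants and enlarging $R_1$ does not settle it.
\item Drop the ``$2V$'' version entirely. It gives no contradiction, and it says nothing where $V<0$. Instead use directly $\frac{-\Delta_pW}{W^{p-1}}=\frac{-\Delta_pw}{w^{p-1}}\ge c_1r^{-p-\delta}>Kr^{-\gamma}\ge V$ on $\{r>R_1\}$. This makes the integrand strictly negative on $\{u>W\}$ whatever the sign of $V$.
\item If $M=0$, run the argument with $\epsilon w$ in place of $W$ and let $\epsilon\to0$.
\item The boundary ordering on $\partial B_{R_1}$ needs $\Omega^c\subset B_{R_1}$. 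So $R_1$, and hence $C$, really depend on $\Omega$ too; this imprecision is already in the quoted statement. It also needs $u^+$ to be locally bounded near $\partial B_{R_1}$, which holds because $V^+\le K|x|^{-\gamma}$ is bounded there.
\end{itemize}

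Your fallback does not close for $p\neq2$. Test with $(u-tW)^+$:
\begin{itemize}
\item For $p>2$, the right side is only bounded by $(p-1)\int V^+u^{p-2}[(u-tW)^+]^2$. This is not controlled by $\|V^+\|_{L^{N/p}}\|(u-tW)^+\|^p_{L^{p^{\star}}}$.
\item For $p<2$, the left side only controls $\int(|\nabla u|+t|\nabla W|)^{p-2}|\nabla(u-tW)^+|^2$, not $\|\nabla(u-tW)^+\|^p_{L^p}$.
\end{itemize}
In both cases smallness of $\|V\|_{L^{N/p}}$ yields an inequality between norms, not $(u-tW)^+\equiv0$. Keep the Picone route as the proof.
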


\begin{lem}[Theorem 2.3 in \cite{SJZH02}]\label{estimate2}
Let $u$ be a weak solution of
$$-\Delta_p u \geq 0 \qquad \mbox{in}\,\, \R^N\setminus \Omega$$
with $\Omega$ compact and $1<p<N$. Then there exist positive constants $m = \inf\limits_{\partial (\R^N\setminus\Omega)} u$ and $C = \inf\limits_{\partial (\R^N\setminus\Omega)} |x|^\frac{N-p}{p-1}$ such that
$$ u(x)\geq C m |x|^\frac{N-p}{p-1} \qquad \mbox{in}\,\,\,\, \R^N\setminus\Omega.$$
%
%Let $\Omega$ be an exterior domain in $\R^N$ such that $\Omega^c=\R^N\setminus \Omega$ is bounded. If
%$u$
%is a positive weak solution of
%$$-\Delta_p u \geq 0 \,\,\,\,\mbox{in}\, \Omega$$
%with $1<p<N$,
%then
%$$ u(x)\geq C M |x|^\frac{N-p}{p-1}\,\,\,\,\mbox{in}\,\,\, \Omega,$$
%where $m = \inf_{\partial \Omega} u$ and $C = \inf_{\partial \Omega} |x|^\frac{N-p}{p-1}$.
\end{lem}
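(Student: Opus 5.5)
We read the conclusion with the decaying exponent, i.e.\ $u(x)\geq C m |x|^{-\frac{N-p}{p-1}}$ in $\R^N\setminus\Omega$. This is how the lemma is used in Step 1, with $u>0$ and $0\in\Omega$. The displayed sign in the statement is evidently a typo, since a super-$p$-harmonic function cannot be bounded below by a growing power times $m$ unless $m=0$. The proof is a comparison argument against a multiple of the fundamental solution of the $p$-Laplacian.

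Set $k:=\frac{N-p}{p-1}$ and $\Gamma(x):=|x|^{-k}$. A direct radial computation shows that $\Gamma$ is smooth and $p$-harmonic in $\R^N\setminus\{0\}$, hence in $\R^N\setminus\Omega$. Put $\varphi:=Cm\Gamma$ with $C=\inf_{\partial(\R^N\setminus\Omega)}|x|^{k}$. Then on $\partial(\R^N\setminus\Omega)$ we have $\varphi(x)=m\,C|x|^{-k}\leq m\leq u(x)$, so $\varphi\le u$ on the inner boundary. It is also $p$-harmonic, so $-\Delta_p\varphi=0\leq -\Delta_p u$ weakly.

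To handle infinity, fix $\varepsilon>0$ and compare $u$ with $\varphi_\varepsilon:=\varphi-\varepsilon$, which is still $p$-harmonic. Choose $R_\varepsilon$ so large that $\varphi<\varepsilon$ on $\{|x|\geq R_\varepsilon\}$. Since $u\geq 0$ (positivity of the solution), $\varphi_\varepsilon<0\le u$ there. Hence the function $w:=(\varphi_\varepsilon-u)^+$ vanishes near $\partial(\R^N\setminus\Omega)$ (in the trace sense) and outside $B_{R_\varepsilon}$. It therefore belongs to $W^{1,p}_0(D)$ with $D=(\R^N\setminus\Omega)\cap B_{R_\varepsilon}$, and by density it is an admissible nonnegative test function. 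Testing the weak inequality $-\Delta_p\varphi_\varepsilon\le-\Delta_p u$ with $w$ gives
\[
\int_{D\cap\{\varphi_\varepsilon>u\}}\big(|\nabla\varphi_\varepsilon|^{p-2}\nabla\varphi_\varepsilon-|\nabla u|^{p-2}\nabla u\big)\cdot(\nabla\varphi_\varepsilon-\nabla u)\,\mathrm{d}x\leq 0 .
\]
By the monotonicity inequality \eqref{eq0802} of Lemma \ref{BSICI}, the integrand is nonnegative and vanishes only where $\nabla\varphi_\varepsilon=\nabla u$. Thus $\nabla w=0$ a.e.; since $w\in W^{1,p}_0(D)$, we get $w\equiv0$, i.e.\ $u\geq\varphi-\varepsilon$ in $D$, and trivially also outside $B_{R_\varepsilon}$. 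Letting $\varepsilon\to0$ yields $u\geq Cm|x|^{-k}$ in $\R^N\setminus\Omega$.

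The main technical point I expect is the justification of the boundary comparison on $\partial(\R^N\setminus\Omega)$. Here $u$ is only a weak (super)solution, so "$u\geq m$ on the boundary" must be understood as $(m-u)^+\in W^{1,p}_0$ near the boundary, or via the lower semicontinuous representative of a supersolution. In our application this is harmless, because $u\in C^{1,\theta}_{loc}$ by Lemma \ref{lm.8} and one may take $\Omega=\overline{B_{R}}$. To avoid the issue completely, I would first perform the comparison on $\{|x|>R'\}$ with $R'$ slightly larger than the radius of a ball containing $\Omega$, where continuity of $u$ makes $w$ vanish in a neighbourhood of the inner sphere, and then adjust the constant. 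If $0\notin\Omega$, one translates the origin into $\Omega$ first, which changes only the constants.
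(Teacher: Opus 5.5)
Your proposal is correct. It is essentially the standard comparison argument behind the cited Serrin--Zou result (the paper gives no proof of its own): compare $u$ with the $p$-harmonic function $Cm|x|^{-\frac{N-p}{p-1}}-\varepsilon$ on a bounded exterior region, use strict monotonicity of $\xi\mapsto|\xi|^{p-2}\xi$, and let $\varepsilon\to0$; you also read the exponent correctly as negative and rely on the implicit hypotheses $u\ge 0$ and $0\in\Omega$, both of which hold in the application.

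One small correction. If $0\notin\Omega$, the statement as written is false, not merely off by constants: take $u\equiv1$, and the right-hand side blows up near $0$. So translating the origin changes the statement to one involving $|x-x_0|$, rather than only changing the constants.
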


Obviously, we can deduce the sharp lower bound in \eqref{eq10.26.57} for positive weak solutions $u, v$ of \eqref{eq1.1} from Lemma \ref{estimate2}.

Defining that
$$f(u,v):=u^{\alpha-p+1}v^{\beta}, \ \ g(u,v):=u^{\beta}v^{\alpha-p+1}.$$
We will show that $f,g$ satisfy the two conditions in Lemma \ref{estimate1}, i.e. $f(u,v), g(u,v)\in L^\frac{N}{p}(\R^N)$ and
\begin{equation}\label{a6}
  f(u,v), g(u,v) \leq K|x|^{-\gamma}
\end{equation}
for some $K>0$, $\gamma>p$ and $|x|$ large. For the sake of simplicity, we only carry out the proof for $f$, $g$ can be handled similarly. One can carry out the proof into two cases.

\emph{Case (i)}. $\alpha-p+1\geq0.$ It is obviously that
$$f(u,v)=u^{\alpha-p+1}v^{\beta}\leq u^{p^\star-p}+v^{p^\star-p}.$$
Since the positive weak solutions $u,v$ of \eqref{eq1.1} belong to $D^{1,p}(\R^N),$ it follows that $u^{p^\star-p}, v^{p^\star-p}\in L^\frac{N}{p}(\R^N),$
which implies $f(u,v)\in L^\frac{N}{p}(\R^N)$. Recalling that $\frac{N-p}{p}(p^\star-p)=p$ and the better preliminary estimate in Lemma \ref{lm.5},
one has $f(u,v)\leq K|x|^{-\gamma}$ for some $K>0$, $\gamma>p$ and $|x|$ large.

\emph{Case (ii)}. $\alpha-p+1<0.$ We will show that for any small $\theta>0$,
\begin{equation}\label{uuvv}
u(x),v(x)\leq C|x|^{-\frac{N-p}{p-1}+\theta} \,\,\,\mbox{for}\,\,\,|x|>1,
\end{equation}
where some positive constant $C$ depends on $\theta$.
According to direct calculations, for any $\hat{\delta} \in (0, \frac{N-p}{p-1}),$ there exists a constant $c>0$ such that
$$-\Delta_p \frac{1}{|x|^{\hat{\delta}}}=\frac{c}{|x|^{\hat{\delta}+2+(p-2)(\hat{\delta}+1)}}.$$
On the other hand, it is easy to check that if $\delta>\frac{N-p}{p},$ then
$$\delta+2+(p-2)(\delta+1)<(p^\star-1)\delta.$$
So, we can find a small $\overline{\theta}>0$, such that for $|x|>1,$
$$-\Delta_p \left[\frac{1}{|x|^{\frac{N-p}{p}+\sigma+\overline{\theta}}}\right]=\frac{c}{|x|^{\frac{N-p}{p}+\sigma+\overline{\theta}+2+(p-2)(\frac{N-p}{p}+\sigma+\overline{\theta}+1)}}\geq\frac{c}{|x|^{(p^\star-1)(\frac{N-p}{p}+\sigma)}}.$$
It follows from Lemma \ref{lm.5},
$$-\Delta_p u=u^\alpha v^\beta \leq \frac{C}{|x|^{(p^\star-1)(\frac{N-p}{p}+\sigma)}}, \qquad |x|\geq R_0.$$
By the comparison principle, we have
$$u(x)\leq \frac{C}{|x|^{\frac{N-p}{p}+\sigma+\overline{\theta}}}.$$
We continue this procedure up to $i,$ with $\frac{N-p}{p}+\sigma+i\overline{\theta}<\frac{N-p}{p-1}.$ So, for any $\theta >0$ small,
\eqref{uuvv} follows.
By Lemma \ref{estimate2} and \eqref{uuvv}, for $\theta >0$ sufficiently small and $\alpha-p+1<0$, we have
\begin{align*}
    f(u,v)=u^{\alpha-p+1}v^\beta &\leq C\frac{1}{|x|^{\frac{N-p}{p-1}(\alpha-p+1)}} \cdot \frac{1}{|x|^{(\frac{N-p}{p-1}-\theta)\beta}}\\
    &=\frac{C}{|x|^{\frac{N-p}{p-1}(p^\star-p)-\theta \beta}}\\
    &=\frac{C}{|x|^{\frac{N-p}{p}(p^\star-p)+\frac{N-p}{p(p-1)}(p^\star-p)-\theta \beta}}\\
    &\leq \frac{C}{|x|^{\frac{N-p}{p}(p^\star-p)+\sigma'}}=\frac{C}{|x|^{p+\sigma'}},
\end{align*}
where $\sigma'>0$ for suitable $\theta$. Recalling $N\geq 2,$ it is easy to know that $f(u,v)\in L^\frac{N}{p}(\R^N), f(u,v)\leq K|x|^{-\gamma}$ for some $K>0$, $\gamma>p$ and $|x|$ large.

Consequently, we deduce the desired sharp upper bound in \eqref{eq10.26.57} from Lemma \ref{estimate1} and the sharp lower bound in \eqref{eq10.26.57} from Lemma \ref{estimate2}, respectively. This proves the sharp asymptotic estimates \eqref{eq0806} for $u,v$ in Theorem \ref{th2.1-}.
%$$\frac{c_0}{1+|x|^\frac{N-p}{p-1}} \leq u(x) \leq \frac{C_0}{1+|x|^\frac{N-p}{p-1}},\,\,\,\,\,\,\mbox{in}\,\R^N.$$

\medskip

Next, in Steps 2 and 3, we will prove the sharp asymptotic estimates \eqref{eq0806+} for $|\nabla u|, |\nabla v|$ in Theorem \ref{th2.1-} by using the arguments in \cite{BS16,VJ16}.

\smallskip

\noindent{\bf Step 2.}
We are to show that, for some constant $C'_0>0$,
$$ |\nabla u(x)|, |\nabla v(x)| \leq \frac{C'_0}{|x|^\frac{N-1}{p-1}}\,\,\,\,\,\,\,\,\mbox{for}\,\,|x|>R_0\geq 1.$$

For any $R>0$ and $y\in \R^N$, we define
$$ u_R (y): =R^\frac{N-p}{p-1}u(Ry).$$
Firstly, considering the first equation of system \eqref{eq1.1}, we deduce that
\begin{align}\label{eq2901}
-\Delta_p u_R(y)
=-R^N \Delta_p u(x)=R^p f(u(Ry),v(Ry))u^{p-1}_R(y) \,\,\,\,\,\,\,\mbox{in}\,\,\R^N.
\end{align}
It follows from $f(u(Ry),v(Ry)) \leq K|x|^{-\gamma}$ with $\gamma>p$ that
\begin{align}\label{eq2902}
-\Delta_p u_R(y)
&=R^p f(u(Ry),v(Ry)) u^{p-1}_R(y) \leq \frac{K}{R^{\gamma-p} |y|^{\gamma}} u^{p-1}_R(y) \, \,\,\,\,\,\,\,\mbox{for}\,\,R>R_0\,\,\,\mbox{and}\,\,|y|>1,
\end{align}
which implies that
\begin{align}\label{eq2903}
-\Delta_p u_R(y)
\leq Ku^{p-1}_R(y) \,\,\,\,\,\,\,\,\,\,\mbox{in}\,\,\R^N\setminus B_1.
\end{align}

On the other hand, according to \eqref{eq10.26.57}, we have, for any $R\geq R_0$,
\begin{align}\label{eq2905}
u_R (y) =R^\frac{N-p}{p-1} u(Ry)\leq C_0 R^\frac{N-p}{p-1} |Ry|^{-\frac{N-p}{p-1}}= C_0 |y|^{-\frac{N-p}{p-1}} \leq C_0 \,\,\,\,\,\,\mbox{in}\,\,\R^N\setminus B_1.
\end{align}
By \eqref{eq2903}--\eqref{eq2905} and the regularity estimates in DiBenedetto \cite{ED83} and Tolksdorf \cite{PT}, we have
\begin{align}\label{eq2906}
\|\nabla u_R\|_{L^\infty (B(0,4) \setminus B(0,2))}\leq C_1
\end{align}
for some constant $C_1>0$ depending on $C_{0}$. Finally, for any $x\in\R^N\setminus B(0,3R)$, by applying \eqref{eq2906} with $R=\frac{|x|}{3}$ and $y\in B(0,4) \setminus B(0,2)$, we obtain
\begin{align}\label{eq2907}
|\nabla_x u(x)|= R^\frac{1-N}{p-1} |\nabla_y u_R(y) | \leq C_1 |x|^\frac{1-N}{p-1}\,\,\,\,\,\,\,\mbox{for}\,\,|x|\geq 3 R_0.
\end{align}
Arguing in the same way, we can derive the same conclusion for $v$.

\smallskip

\noindent{\bf Step 3.}
We will show that, for some constant $c'_0$,
$$ |\nabla u(x)|, |\nabla v(x)| \geq \frac{c'_0}{|x|^\frac{N-1}{p-1}} \quad\,\,\mbox{for}\,\,|x|>R_0.$$

The proof will be carried out by contradiction arguments. We assume on the contrary that there exist sequences of radii $R_n$ and points $x_n\in\R^N$ with $R_n\to\infty$ as $n\to\infty$ and $|x_n|=R_n$, such that
\begin{align}\label{eq2908}
|\nabla u(x_n)|\leq \frac{\theta_n}{R_n^\frac{N-1}{p-1}}
\end{align}
with $\theta_n\to 0$ as $n\to\infty$. For $0<a<A$ fixed, we set
$$ u_{R_n} (x): =R_n^\frac{N-p}{p-1}   u(R_n  x), \qquad \forall \,\, x\in\overline{B_A \setminus B_a}.$$
It follows from \eqref{eq10.26.57} that for any $n$ large enough such that $|R_n x|\geq R_{n}a>R_0$, we have
\begin{align}\label{eq2910}
\frac{c_0}{|x|^\frac{N-p}{p-1}}   \leq    u_{R_n} (x)=R_n^\frac{N-p}{p-1} u(R_n  x) \leq \frac{C_0}{|x|^\frac{N-p}{p-1}},%\,\,\,\,\,\mbox{in}\,\,\overline{B_A \setminus B_a},
\end{align}
which implies that
\begin{align}\label{eq2911}
\frac{c_0}{A^\frac{N-p}{p-1}}   \leq    u_{R_n} (x) \leq \frac{C_0}{a^\frac{N-p}{p-1}}\,\,\,\,\,\,\,\,\mbox{in}\,\,\overline{B_A \setminus B_a},
\end{align}
and in particular,
\begin{align}\label{eq2912}
u_{R_n} (x) \leq \frac{C_0}{A^\frac{N-p}{p-1}}\,\,\,\,\,&\mbox{on}\,\,\partial B_A, \\
u_{R_n} (x) \geq \frac{c_0}{a^\frac{N-p}{p-1}}\,\,\,\,\,&\mbox{on}\,\,\partial B_a. \nonumber
\end{align}

On the other hand, similar to \eqref{eq2902}, we obtain
\begin{align}\label{eq2913}
0\leq-\Delta_p u_{R_n} (x) \leq \frac{K}{a^{\gamma} {R_n}^{\gamma-p}} u^{p-1}_{R_{n}}(x) \leq K u^{p-1}_{R_n}(x) \,\,\,\,\,\,\,\mbox{in}\,\,\overline{B_A \setminus B_a}
\end{align}
for any $n$ large enough such that $|R_n x|\geq R_{n}a>R_0$ and $a^{\gamma} {R_n}^{\gamma-p}\geq1$. Therefore, from \eqref{eq2913} and \eqref{eq2911}, by the regularity results in \cite{PT}, we get that
$$ \|u_{R_n}\|_{C^{1,\theta} (K)}<+\infty \,\,\,\,\quad \mbox{for}\,\,0<\theta<1$$
for any compact set $K\subset B_A \setminus B_a$. Without loss of generality, we may redefine $a$ and $A$ and assume that the $C^{1,\alpha}$ estimates hold in $\overline{B_A \setminus B_a}$. Hence, up to subsequences, we have
\begin{align}\label{eq2914}
u_{R_n} (x) \to  u_{a,A}(x),\,\quad \mbox{as}\,\,n\to\infty \,\,\,\,\,\quad\,\, \mbox{in}\,\,C^{1,\alpha^\prime}(B_A \setminus B_a)
\end{align}
for $0<\alpha^\prime<\alpha$. From \eqref{eq2913}, we deduce that
\begin{align}\label{eq2915}
-\Delta_p u_{a,A}(x)=0\,\,\,\,\,\,\,\mbox{in}\,\,B_A \setminus \overline{B_a}.
\end{align}
Now, for every $j\in\N^+$, let $a_j=\frac{1}{j}$ and $A_j=j$ and we constructs $u_{a_j,A_j}$ by reasoning as above. Then, as $j\to\infty$, a diagonal argument implies that there exist a limiting profile $u_\infty$ such that
$$ u_\infty \equiv u_{a_j,A_j} \,\,\,\,\,\,\,\mbox{in}\,\, B_{A_j} \setminus B_{a_j}, \quad \forall \,\,j\in\N^+$$
In particular,
\begin{align}\label{eq2916}
-\Delta_p u_\infty (x)=0\,\,\,\,\,\,\,\,\mbox{in}\,\,\R^N \setminus \{ 0 \}.
\end{align}
From \eqref{eq2912}, which holds with $a=a_j$ and $A=A_j$, it follows that
$$ \lim_{|x|\to\infty} u_\infty (x)=0\,\,\,\,\quad\mbox{and}\,\,\,\,\quad\lim_{|x|\to 0} u_\infty (x)= \infty.$$
Thus it follows from the uniqueness up to multipliers of $p$-harmonic maps in $\R^N\setminus \{0\}$ under suitable conditions at zero and at infinity in \cite[Theorem 2.1]{BS16} that
$$ u_\infty (x)= \frac{C}{|x|^\frac{N-p}{p-1}}$$
for some constants $C$.
Let the sequence $x_n$ be the same as in \eqref{eq2908} and set $y_n=\frac{x_n}{R_n}$. Then, by \eqref{eq2908}, it follows that
$$|\nabla u_{R_n}(y_n)| ={R_n}^\frac{N-1}{p-1} |\nabla u(x_n)|  \leq \theta_n \to 0,\quad\,\,\,\,\mbox{as}\,\,n\to\infty.$$
Since $|y_n|=1$, up to subsequences, we have that $y_n\to\overline{y} \in \partial B_1$.
Consequently, by the uniform convergence of the gradient indicated by \eqref{eq2914}, one has
$$ \nabla u_\infty(\overline{y})=0,$$
which is a contradiction since the fundamental solution $u_{\infty}$ has no critical points. This establishes the sharp asymptotic lower bound estimates for $|\nabla u|$. The sharp lower bound estimates for $|\nabla v|$ can be proved similarly.

Therefore, we have derived the sharp asymptotic estimates \eqref{eq0806} and \eqref{eq0806+} in Theorem \ref{th2.1-}, and hence finished our proof of Theorem \ref{th2.1-}. $\qquad\qquad\qquad\qquad\qquad\qquad\qquad\qquad\qquad\qquad\quad\square$

\section{Proof of Theorem \ref{th2}: radial symmetry and monotonicity of solutions}\label{sc4}

Thanks to the sharp asymptotic estimates \eqref{eq0806} and \eqref{eq0806+} in Theorem \ref{th2.1-}, we are ready to prove Theorem \ref{th2}. In this section, using the method of moving planes, we will show the radial symmetry and strictly radial monotonicity of positive $D^{1,p}$-weak solutions to the $D^{1,p}$-critical equations \eqref{eq1.1} by discussing the singular elliptic case $1<p<2$ and the degenerate elliptic case $p>2$ separately, and we complete the proof by discussing the uniqueness in the end. So we will divide this section into three sub-sections.

\medskip

Before carrying out the moving planes technique to prove Theorem \ref{th2}, we need some standard notations.

\smallskip

Let $\nu$ be any direction in $\R^N$, i.e. $\nu\in \R^N$ and $|\nu|=1$, for arbitrary $\lambda\in\R$, we define
$$ \Sigma_\lambda^\nu : =\{ x\in\R^N \mid x \cdot \nu < \lambda \}, $$
$$ T_\lambda^\nu : =\{ x\in\R^N \mid x \cdot \nu = \lambda \},$$
$$ u^\nu_\lambda (x) := u(x^\nu_\lambda), \quad \forall \,\, x\in\R^N,$$
and define the reflection $R^\nu_\lambda$ w.r.t. the hyperplane $T^\nu_\lambda$ by
$$ x_\lambda^\nu : =R^\nu_\lambda(x)= x + 2(\lambda-x\cdot \nu)\nu,\quad\,\,\,\,\forall \,\, x\in\R^N.$$
We denote the critical sets that $Z_u=\{ x\in\R^N \mid |\nabla u (x)|=0\}, Z_v=\{ x\in\R^N \mid |\nabla v (x)|=0\}$ and define
\begin{equation}\label{Z}
  Z_\lambda^\nu(u) :=\{ x\in\R^N \mid |\nabla u (x)|=|\nabla u_\lambda^\nu (x)|=0 \} \subseteq  Z_u,
\end{equation}
\begin{equation}\label{Z'}
  Z_\lambda^\nu(v) :=\{ x\in\R^N \mid |\nabla v (x)|=|\nabla v_\lambda^\nu (x)|=0 \} \subseteq  Z_v.
\end{equation}

Besides, we shall exploit the following lemma from \cite{EFRS}
\begin{lem}\label{tip}
     Let us consider $h(t)=t^s$ with $s>0$ and $b\in [\theta_1, \theta_2]$ with $\theta_i>0$. Then, for $0<a\leq b$, we have
    $$\frac{h(b)-h(a)}{b-a}\leq \overline{C}(s,\theta_1, \theta_2 ).$$
\end{lem}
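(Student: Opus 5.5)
The statement is a purely elementary fact about the power function, so I will prove it by splitting according to how close $a$ is to $b$ and using only the mean value theorem and monotonicity of $h$. Fix $s>0$, $0<\theta_1\le\theta_2$, $b\in[\theta_1,\theta_2]$ and $0<a\le b$. If $a=b$ the difference quotient is not literally defined; I interpret it as $h'(b)=sb^{s-1}$. This is bounded by $s\max\{\theta_1^{s-1},\theta_2^{s-1}\}$, so that case is harmless. The main case is $0<a<b$, which I split at the threshold $a=\theta_1/2$.

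\emph{Case 1: $\theta_1/2\le a<b$.} By the mean value theorem there is $\xi\in(a,b)\subset[\theta_1/2,\theta_2]$ with
\[
\frac{h(b)-h(a)}{b-a}=s\xi^{s-1}\le s\max\left\{\left(\tfrac{\theta_1}{2}\right)^{s-1},\theta_2^{s-1}\right\}.
\]
The maximum covers both $s\ge1$, where $\xi^{s-1}\le\theta_2^{s-1}$, and $0<s<1$, where $\xi^{s-1}\le(\theta_1/2)^{s-1}$.

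\emph{Case 2: $0<a<\theta_1/2$.} Here $b-a\ge b-\theta_1/2\ge\theta_1/2$. Since $h\ge0$ is increasing,
\[
0<h(b)-h(a)\le h(b)\le\theta_2^{s},
\]
so the quotient is at most $2\theta_2^s/\theta_1$.

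Taking $\overline{C}(s,\theta_1,\theta_2)$ to be the maximum of the three constants above gives the lemma. The only point needing care is $a\to0$ with $0<s<1$, where $h'$ blows up near $0$. The split at $\theta_1/2$ avoids this because $b$ stays at least $\theta_1$ away from $0$, so the denominator $b-a$ cannot degenerate while $a$ is small. I expect no real obstacle beyond choosing this threshold.
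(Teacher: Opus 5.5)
Your proof is correct and complete. The case $a\ge\theta_1/2$ is handled by the mean value theorem with $\xi\in(\theta_1/2,\theta_2)$. For $a<\theta_1/2$, you bound the denominator below by $\theta_1/2$ and the numerator above by $\theta_2^s$. This split isolates the only delicate regime ($0<s<1$, $a\to0$). Your reading of the quotient as $h'(b)$ at $a=b$ is harmless.

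The paper gives no proof of this lemma; it imports it from \cite{EFRS}, so there is no internal argument to compare against.

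If you want a version without a case split, convexity gives a sharper constant in one line:
\begin{itemize}
\item For $s\ge1$, $h$ is convex, so the secant slope over $[a,b]$ is at most $h'(b)=sb^{s-1}\le s\theta_2^{s-1}$.
\item For $0<s<1$, $h$ is concave with $h(0)=0$. The three-slope inequality then bounds the slope over $[a,b]$ by the slope over $[0,b]$, namely $b^{s-1}\le\theta_1^{s-1}$.
\end{itemize}
This yields $\overline{C}=\max\{s\theta_2^{s-1},\theta_1^{s-1}\}$. It also shows that only the confinement of the larger point $b$ matters, while $a$ may range over all of $(0,b]$.

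Your argument also covers arbitrary $a\in(0,b]$, which is what the paper needs. In \eqref{eq0808} and \eqref{eq1005}, only the larger argument ($|x|^{\frac{N-p}{p-1}}u$, resp.\ $|x^\nu_\lambda|^{\frac{N-p}{p-1}}u^\nu_\lambda$) is known to lie in $[c_0,C_0]$. The smaller one is only known to be positive.
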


\smallskip

\subsection{The degenerate elliptic case $2<p<N$}\label{sc4.1}

Since equation \eqref{eq1.1} is invariant under any rotations and translations, we may fix $\nu=e_1=(1,0,\cdots,0)$ and use the notations $\Sigma_\lambda: =\Sigma_\lambda^{e_1}$, $T_\lambda:=T_\lambda^{e_1}$ and $x_\lambda :=x_\lambda^{e_1}$ for the sake of simplicity.

Next, we define
$$ \Lambda:=\{ \lambda \in \R \mid u\leq u_\mu\, \, and \,\, v\leq v_\mu \,\,\, \mbox{in}\,\,\Sigma_\mu,\,\,\,\forall \,\, \mu \leq \lambda \},$$
and, if $\Lambda\neq\emptyset$, we set
$$\lambda_0 := \sup \Lambda.$$

Now we will prove the radial symmetry and strictly radial monotonicity in Theorem \ref{th2} for $2<p<N$.
\begin{proof}
We start moving planes from the left-hand side of the $e_1$-direction until its limiting position. The moving planes procedure consists of three main steps.

\smallskip

{\bf Step 1.} We will prove that $u\leq u_\lambda$ and $v\leq v_\lambda$ for $\lambda<0$ with $|\lambda|$ large, and hence $\Lambda\neq\emptyset$.

\smallskip

We may choose $(u-u_\lambda)^+ \chi_{\Sigma_\lambda} $ and $(v-v_\lambda)^+ \chi_{\Sigma_\lambda} $ as the test functions in the definition of $D^{1,p}(\R^N)$-weak solution of equation \eqref{eq1.1} (see Definition \ref{weak}) and get
$$ \int_{\Sigma_\lambda} |\nabla u|^{p-2} \langle \nabla u,\nabla(u-u_\lambda)^+ \rangle \mathrm{d}x= \int_{\Sigma_\lambda} u^{\alpha}v^{\beta} (u-u_\lambda)^+  \mathrm{d}x,$$
$$ \int_{\Sigma_\lambda} |\nabla v|^{p-2} \langle \nabla v,\nabla(v-v_\lambda)^+ \rangle \mathrm{d}x= \int_{\Sigma_\lambda} u^{\beta}v^{\alpha} (v-v_\lambda)^+  \mathrm{d}x.$$
 Since $(u_\lambda, v_\lambda)$ satisfies the equations \eqref{eq1.1} as well, analogously we have
$$ \int_{\Sigma_\lambda} |\nabla u_\lambda|^{p-2} \langle \nabla u_\lambda,\nabla(u-u_\lambda)^+ \rangle \mathrm{d}x= \int_{\Sigma_\lambda} u^{\alpha}_{\lambda}v^{\beta}_{\lambda} (u-u_\lambda)^+  \mathrm{d}x,$$
$$ \int_{\Sigma_\lambda} |\nabla v_\lambda|^{p-2} \langle \nabla v_\lambda,\nabla(v-v_\lambda)^+ \rangle \mathrm{d}x= \int_{\Sigma_\lambda} u^{\beta}_{\lambda}v^{\alpha}_{\lambda} (v-v_\lambda)^+  \mathrm{d}x.$$
By subtracting between the above two formulae, we obtain
\begin{align}\label{eq.mpm.u}
\int_{\Sigma_\lambda}  \langle  |\nabla u|^{p-2}\nabla u- |\nabla u_\lambda|^{p-2}\nabla u_\lambda,\nabla(u-u_\lambda)^+ \rangle \mathrm{d}x= \int_{\Sigma_\lambda} ( u^\alpha v^\beta -u^{\alpha}_{\lambda}v^{\beta}_{\lambda} ) (u-u_\lambda)^+  \mathrm{d}x,
\end{align}
and
\begin{align}\label{eq.mpm.v}
\int_{\Sigma_\lambda}  \langle  |\nabla v|^{p-2}\nabla v- |\nabla v_\lambda|^{p-2}\nabla v_\lambda,\nabla(v-v_\lambda)^+ \rangle \mathrm{d}x= \int_{\Sigma_\lambda} ( u^\beta v^\alpha -u^{\beta}_{\lambda}v^{\alpha}_{\lambda} ) (v-v_\lambda)^+  \mathrm{d}x.
\end{align}
Exploiting \eqref{eq0802}, we arrive at
\begin{align}\label{eq0803}
\int_{\Sigma_\lambda} (|\nabla u|+|\nabla u_\lambda|)^{p-2} |\nabla(u-u_\lambda)^+|^2 \mathrm{d}x \leq \frac{1}{C'} \int_{\Sigma_\lambda} (u^\alpha v^\beta -u^{\alpha}_{\lambda}v^{\beta}_{\lambda} ) (u-u_\lambda)^+  \mathrm{d}x,
\end{align}
\begin{align}\label{eq0803-}
    \int_{\Sigma_\lambda} (|\nabla v|+|\nabla v_\lambda|)^{p-2} |\nabla(v-v_\lambda)^+|^2 \mathrm{d}x \leq \frac{1}{C'} \int_{\Sigma_\lambda} (u^\beta v^\alpha -u^{\beta}_{\lambda}v^{\alpha}_{\lambda} ) (v-v_\lambda)^+  \mathrm{d}x.
\end{align}

\smallskip

Let us consider \eqref{eq0803} first. Using the fact that $u\geq u_\lambda$ in the support of $(u-u_\lambda)^+$, we get
\begin{align}\label{eq0804}
&\quad \int_{\Sigma_\lambda} ( u^\alpha v^\beta -u^{\alpha}_{\lambda}v^{\beta}_{\lambda} ) (u-u_\lambda)^+ \mathrm{d}x  \\
&= \int_{\Sigma_\lambda} (u^{\alpha}-u^\alpha_{\lambda})v^\beta (u-u_\lambda)^+ \mathrm{d}x + \int_{\Sigma_\lambda}u^\alpha_{\lambda}(v^\beta -v^{\beta}_{\lambda}) (u-u_\lambda)^+ \mathrm{d}x \nonumber\\
&\leq  \int_{\Sigma_\lambda}  (u^{\alpha}-u^\alpha_{\lambda})v^\beta (u-u_\lambda)^+\mathrm{d}x + \int_{\Sigma_\lambda}  u^{\alpha}
\frac{v^\beta -v^{\beta}_{\lambda}}{v-v_\lambda} (v-v_\lambda)^+  (u-u_\lambda)^+ \mathrm{d}x\nonumber\\
&=: I_1 + I_2. \nonumber
\end{align}

Similarly, we obtain
\begin{align}\label{eq0804-}
    &\quad \int_{\Sigma_\lambda} ( u^\beta v^\alpha -u^{\beta}_{\lambda}v^{\alpha}_{\lambda} ) (v-v_\lambda)^+ \mathrm{d}x  \\
&= \int_{\Sigma_\lambda} (v^{\alpha}-v^\alpha_{\lambda})u^\beta (v-v_\lambda)^+ \mathrm{d}x + \int_{\Sigma_\lambda}v^\alpha_{\lambda}(u^\beta -u^{\beta}_{\lambda}) (v-v_\lambda)^+ \mathrm{d}x \nonumber\\
&\leq  \int_{\Sigma_\lambda}  (v^{\alpha}-v^\alpha_\lambda)u^\beta (v-v_\lambda)^+\mathrm{d}x + \int_{\Sigma_\lambda}  \frac{u^\beta -u^{\beta}_{\lambda}}{u-u_\lambda}v^{\alpha}
(v-v_\lambda)^+  (u-u_\lambda)^+ \mathrm{d}x\nonumber\\
&=: J_1 + J_2. \nonumber
\end{align}

First, we estimate $I_1$. From Theorem \ref{th2.1-}, we obtain $c_0 \leq |x|^{-\frac{N-p}{p-1}}u, |x|^{-\frac{N-p}{p-1}}v \leq C_0 $ in $\Sigma_{\lambda}$ for large $|\lambda|$. By Lemma \ref{tip}, we obtain
$$\frac{(|x|^\frac{N-p}{p-1}u)^{\alpha}-(|x|^\frac{N-p}{p-1}u_\lambda)^{\alpha}}{|x|^\frac{N-p}{p-1}u-|x|^\frac{N-p}{p-1}u_\lambda} \leq \overline{C}_1\quad
\mbox{and} \quad
\frac{(|x|^\frac{N-p}{p-1}v)^{\beta}-(|x|^\frac{N-p}{p-1}v_\lambda)^{\beta}}{|x|^\frac{N-p}{p-1}v-|x|^\frac{N-p}{p-1}v_\lambda} \leq \overline{C}_2,$$
where $\overline{C}_1:=\overline{C}(c_0, C_0, \alpha), \overline{C}_2:=\overline{C}(c_0,C_0, \beta).$ Then we have
\begin{align}\label{eq0808}
I_1 &\leq C_0 \int_{\Sigma_\lambda} |x|^{-\frac{N-p}{p-1}\beta} |x|^{-\frac{N-p}{p-1}\alpha} \frac{(|x|^\frac{N-p}{p-1}u)^{\alpha}-(|x|^\frac{N-p}{p-1}u_\lambda)^{\alpha}}{|x|^\frac{N-p}{p-1}u-|x|^\frac{N-p}{p-1}u_\lambda} |x|^\frac{N-p}{p-1} [(u-u_\lambda)^+]^2 \mathrm{d}x \\
    &\leq C_0 \int_{\Sigma_\lambda} |x|^{-\frac{N-p}{p-1}(p^{\star}-2)}\frac{(|x|^\frac{N-p}{p-1}u)^{\alpha}-(|x|^\frac{N-p}{p-1}u_\lambda)^{\alpha}}{|x|^\frac{N-p}{p-1}u-|x|^\frac{N-p}{p-1}u_\lambda}  [(u-u_\lambda)^+]^2 \mathrm{d}x,\nonumber \\
    &\leq C_0 \overline{C}_1\int_{\Sigma_\lambda} |x|^{-\frac{N-p}{p-1}(p^{\star}-2)} [(u-u_\lambda)^+]^2 \mathrm{d}x. \nonumber
\end{align}

Now, in order to apply the weighted Hardy--Sobolev inequality (Lemma \ref{HDSI}), we set
$$ s:=-\frac{N-1}{p-1}(p-2)\quad\,\,\mbox{and}\quad\,\,\beta_1:=(p^\star -2)\frac{N-p}{p-1} +s-2.$$
Notice that
$$ s:=-\frac{N-1}{p-1}(p-2)=-\left(1-\frac{1}{p-1}\right)(N-1)>2-N$$
and
$$\beta_1 >0.$$
\begin{align}\label{eq0809}
I_1 &\leq C_0 \overline{C}_1 \frac{1}{|\lambda|^{\beta_1}} \int_{\Sigma_\lambda} |x|^{s-2} [(u-u_\lambda)^+]^2 \mathrm{d}x \\
&\leq C_0 \overline{C}_1 \frac{1}{|\lambda|^{\beta_1}} \left( \frac{2}{N+s-2} \right)^2 \int_{\Sigma_\lambda} |x|^s |\nabla(u-u_\lambda)^+|^2 \mathrm{d}x \nonumber\\
&\leq \frac{C_0 \overline{C}_1}{(c'_0)^{p-2}} \frac{1}{|\lambda|^{\beta_1}} \left( \frac{2}{N+s-2} \right)^2 \int_{\Sigma_\lambda} |\nabla u|^{p-2} |\nabla(u-u_\lambda)^+|^2 \mathrm{d}x \nonumber\\
&\leq \frac{C_0 \overline{C}_1}{(c'_0)^{p-2}} \frac{1}{|\lambda|^{\beta_1}} \left( \frac{2}{N+s-2} \right)^2 \int_{\Sigma_\lambda} \left(|\nabla u| +|\nabla u_\lambda|\right)^{p-2} |\nabla(u-u_\lambda)^+|^2 \mathrm{d}x, \nonumber
\end{align}
where $c'_0$ is given by Theorem \ref{th2.1-}. In the same way, we obtain
\begin{align}\label{eq0809-}
    J_1 \leq \frac{C_0 \overline{C}_1}{(c'_0)^{p-2}} \frac{1}{|\lambda|^{\beta_1}} \left( \frac{2}{N+s-2} \right)^2 \int_{\Sigma_\lambda} \left(|\nabla v| +|\nabla v_\lambda|\right)^{p-2} |\nabla(v-v_\lambda)^+|^2 \mathrm{d}x.
\end{align}

\smallskip

Now we estimate $I_2$. By the same way as estimates of $I_1$, we have
\begin{align}\label{eq0810}
    I_2 &\leq C_0 \overline{C}_2 \int_{\Sigma_\lambda} |x|^{-\frac{N-p}{p-1}(p^{\star}-2)} (v-v_\lambda)^+  (u-u_\lambda)^+ \mathrm{d}x \\
        &\leq \frac{C_0 \overline{C}_2}{2}\left(\int_{\Sigma_\lambda} |x|^{-\frac{N-p}{p-1}(p^{\star}-2)} [(v-v_\lambda)^+]^2 \mathrm{d}x + \int_{\Sigma_\lambda} |x|^{-\frac{N-p}{p-1}(p^{\star}-2)} [(u-u_\lambda)^+]^2 \mathrm{d}x\right) \nonumber\\
        &\leq \frac{C_0 \overline{C}_2}{2(c'_0)^{p-2}}\frac{1}{|\lambda|^{\beta_1}}\left( \frac{2}{N+s-2} \right)^2 \cdot \int_{\Sigma_\lambda}  (|\nabla u|+|\nabla u_\lambda|)^{p-2} |\nabla(u-u_\lambda)^+|^2 \mathrm{d}x \nonumber\\
        &\quad+ \frac{C_0 \overline{C}_2}{2(c'_0)^{p-2}}\frac{1}{|\lambda|^{\beta_1}}\left( \frac{2}{N+s-2} \right)^2 \cdot \int_{\Sigma_\lambda}  (|\nabla v|+|\nabla v_\lambda|)^{p-2} |\nabla(v-v_\lambda)^+|^2 \mathrm{d}x. \nonumber
\end{align}
At the end of the above inequality, we use the conclusion from $I_1$. Similarly, we have the estimates
\begin{align}\label{eq0810-}
    J_2&\leq \frac{C_0 \overline{C}_2}{2(c'_0)^{p-2}}\frac{1}{|\lambda|^{\beta_1}}\left( \frac{2}{N+s-2} \right)^2 \cdot \int_{\Sigma_\lambda}  (|\nabla u|+|\nabla u_\lambda|)^{p-2} |\nabla(u-u_\lambda)^+|^2 \mathrm{d}x \\
&\quad+ \frac{C_0 \overline{C}_2}{2(c'_0)^{p-2}}\frac{1}{|\lambda|^{\beta_1}}\left( \frac{2}{N+s-2} \right)^2 \cdot \int_{\Sigma_\lambda}  (|\nabla v|+|\nabla v_\lambda|)^{p-2} |\nabla(v-v_\lambda)^+|^2 \mathrm{d}x. \nonumber
\end{align}

Combining with \eqref{eq0803}- \eqref{eq0804-}, \eqref{eq0809}- \eqref{eq0810-}, we have
\begin{align}\label{eq0902}
&\int_{\Sigma_\lambda}  (|\nabla u|+|\nabla u_\lambda|)^{p-2} |\nabla(u-u_\lambda)^+|^2 \mathrm{d}x + \int_{\Sigma_\lambda} (|\nabla v|+|\nabla v_\lambda|)^{p-2} |\nabla(v-v_\lambda)^+|^2 \mathrm{d}x\\
&\leq \frac{C_0(\overline{C}_1+\overline{C}_2)}{C'(c'_0)^{p-2}|\lambda|^{\beta_1}} \left( \frac{2}{N+s-2} \right)^2 \int_{\Sigma_\lambda}\left(|\nabla u| +|\nabla u_\lambda|\right)^{p-2} |\nabla(u-u_\lambda)^+|^2 \mathrm{d}x \nonumber\\
&\quad+\frac{C_0(\overline{C}_1+\overline{C}_2)}{C'(c'_0)^{p-2}|\lambda|^{\beta_1}} \left( \frac{2}{N+s-2} \right)^2  \int_{\Sigma_\lambda}  (|\nabla v|+|\nabla v_\lambda|)^{p-2} |\nabla(v-v_\lambda)^+|^2 \mathrm{d}x,  \nonumber
\end{align}
where the positive constants $C_{0}$ and $c_{0}$ come from the sharp asymptotic estimates \eqref{eq0806} and \eqref{eq0806+} in Theorem \ref{th2.1-}, and $C'$ comes from the basic point-wise inequalities in Lemma \ref{BSICI}. For $\lambda$ sufficiently negative such that
$$\frac{C_0(\overline{C}_1+\overline{C}_2)}{C'(c'_0)^{p-2}|\lambda|^{\beta_1}} \left( \frac{2}{N+s-2} \right)^2 <1, $$
it follows that \eqref{eq0902} provides a contradiction unless $(u-u_\lambda)^+ =0$ and $(v-v_\lambda)^+ =0$. Therefore, for $\lambda$ sufficiently negative,
$$ u\leq u_\lambda \quad and \quad v\leq v_\lambda  \quad\,\,\,\mbox{in}\,\,\Sigma_\lambda.$$
Note that, by the strong comparison principle (Lemma \ref{th3.1}) and the fact that
\begin{align}\label{eq0903}
-\Delta_p u =u^\alpha v^\beta \leq u^{\alpha}_{\lambda}v^{\beta}_{\lambda}= -\Delta_p u_\lambda \,\, \quad\,\,\mbox{in}\,\,\Sigma_\lambda,
\end{align}
we deduce that, either $u<u_\lambda$ in $\Sigma_\lambda$ or $u=u_\lambda$ in $\Sigma_\lambda$. The conclusion about $v$ is the same.

Now we have proved $\Lambda\neq\emptyset$ and finished Step 1. By continuity, we have $u\leq u_{\lambda_0}$ and $v\leq v_{\lambda_0}$ in $\Sigma_{\lambda_0}$.

\smallskip

{\bf Step 2.} We are to show that $u=u_{\lambda_0}$ and $v=v_{\lambda_0}$ in $\Sigma_{\lambda_0}$.

Assume by contradiction that $u \not\equiv u_{\lambda_0}$ and $v \not\equiv v_{\lambda_0}$ in $\Sigma_{\lambda_0}$. Again, by the strong comparison principle (Lemma \ref{th3.1}) and \eqref{eq0903}, we deduce that $u<u_{\lambda_0}$ in $\Sigma_{\lambda_0}$. Let us take $R > 0$ large (to be determined later), $\epsilon>0$ and $\delta>0$ small, and set
$$ B^\varepsilon_R:=\Sigma_{\lambda_0 + \varepsilon} \setminus B_R, \quad\quad  S_\delta^\varepsilon:=(\Sigma_{\lambda_0 + \varepsilon} \setminus \Sigma_{\lambda_0 - \delta})\cap B_R  \quad \,\,\mbox{and}\,\,\quad  K_\delta:=\overline{B_R\cap\Sigma_{\lambda_0 - \delta}}.$$
It is clear that
$$ \Sigma_{\lambda_0 + \varepsilon}=B^\varepsilon_R \cup S_\delta^\varepsilon \cup K_\delta.$$
For $\delta>0$, we have $u_{\lambda_0}>u$ in $K_\delta$. Since $K_\delta$ is compact, there exists a small $\overline{\varepsilon}>0$ such that
$$u_{\lambda_0+\varepsilon}>u \,\,\,\,\quad \,\,\mbox{in}\,\,K_\delta$$
for any $0\leq\varepsilon\leq\overline{\varepsilon}$. For $0\leq\varepsilon\leq\overline{\varepsilon}$, we may argue as in Step 1 by choosing $(u-u_{\lambda_0+\varepsilon})^+ \chi_{\Sigma_{\lambda_0+\varepsilon}}$ as the test function. In the following, the integral in $B^\varepsilon_R$ can be estimated as in Step 1 and recalling that weighted Hardy-Sobolev inequality only requires the functions to vanish at infinity. Hence, we can deduce from the weighted Hardy-Sobolev inequality in Lemma \ref{HDSI},  H\"{o}lder's inequality and the sharp asymptotic estimates in Theorem \ref{th2.1-} that
\begin{align}\label{eq1001}
&\quad\,\,\int_{\Sigma_{\lambda_0+\varepsilon}} (|\nabla u|+|\nabla u_{\lambda_0+\varepsilon}|)^{p-2} |\nabla(u-u_{\lambda_0+\varepsilon})^+|^2 \mathrm{d}x \\
&\leq \frac{1}{C'} \int_{B^\varepsilon_R}  (u^\alpha v^\beta- u^\alpha_{\lambda_0 +\varepsilon} v^\beta_{\lambda_0 +\varepsilon} ) (u-u_{\lambda_0+\varepsilon})^+ \mathrm{d}x\nonumber\\
&\,\,+ \frac{1}{C'} \int_{S^\varepsilon_\delta} (u^\alpha-u^\alpha_{\lambda_0+\varepsilon}) v^\beta (u-u_{\lambda_0+\varepsilon})^+ \mathrm{d}x + \frac{1}{C'} \int_{S^\varepsilon_\delta} u^\alpha(v^\beta-v^\beta_{\lambda_0+\varepsilon}) (u-u_{\lambda_0+\varepsilon})^+ \mathrm{d}x \nonumber \\
&\leq \frac{C_0(2\overline{C}_1+\overline{C}_2)}{2C'(c'_0)^{p-2}R^{\beta_1}} \left( \frac{2}{N+s-2} \right)^2 \int_{\Sigma_{\lambda_0 + \varepsilon}}\left(|\nabla u| +|\nabla u_{\lambda_0+\varepsilon}|\right)^{p-2} |\nabla(u-u_{\lambda_0+\varepsilon})^+|^2 \mathrm{d}x \nonumber\\
&\quad +\frac{C_0 \overline{C}_2}{2C'(c'_0)^{p-2}R^{\beta_1}} \left( \frac{2}{N+s-2} \right)^2  \int_{\Sigma_{\lambda_0 + \varepsilon}}  (|\nabla v|+|\nabla v_{\lambda_0+\varepsilon}|)^{p-2} |\nabla(v-v_{\lambda_0+\varepsilon})^+|^2 \mathrm{d}x,  \nonumber  \\
&\quad + \frac{\|v\|^\beta_{\infty}}{C'} \int_{S^\varepsilon_\delta} \frac{u^\alpha-u^\alpha_{\lambda_0+\varepsilon}}{u-u_{\lambda_0+\varepsilon}}[(u-u_{\lambda_0+\varepsilon})^+]^2  \mathrm{d}x \nonumber \\
& \quad + \frac{\|u\|^\alpha_{\infty}}{C'} \int_{S^\varepsilon_\delta} \frac{v^\beta-v^\beta_{\lambda_0+\varepsilon}}{v-v_{\lambda_0+\varepsilon}}(v-v_{\lambda_0+\varepsilon})^+ (u-u_{\lambda_0+\varepsilon})^+ \mathrm{d}x \nonumber\\
&\leq \frac{C_0(2\overline{C}_1+\overline{C}_2)}{2C'(c'_0)^{p-2}R^{\beta_1}} \left( \frac{2}{N+s-2} \right)^2 \int_{\Sigma_{\lambda_0 + \varepsilon}}\left(|\nabla u| +|\nabla u_{\lambda_0+\varepsilon}|\right)^{p-2} |\nabla(u-u_{\lambda_0+\varepsilon})^+|^2 \mathrm{d}x \nonumber\\
&\quad+\frac{C_0\overline{C}_2}{2C'(c'_0)^{p-2}R^{\beta_1}} \left( \frac{2}{N+s-2} \right)^2  \int_{\Sigma_{\lambda_0 + \varepsilon}}  (|\nabla v|+|\nabla v_{\lambda_0+\varepsilon}|)^{p-2} |\nabla(v-v_{\lambda_0+\varepsilon})^+|^2 \mathrm{d}x,  \nonumber \\
&\quad + \frac{\overline{C}_1\|v\|^\beta_{\infty}}{C'} \int_{S^\varepsilon_\delta}[(u-u_{\lambda_0+\varepsilon})^+]^2 \mathrm{d}x  + \frac{\overline{C}_2\|u\|^\alpha_{\infty}}{2C'} \int_{S^\varepsilon_\delta}\left([(u-u_{\lambda_0+\varepsilon})^+]^2+[(v-v_{\lambda_0+\varepsilon})^+]^2\right) \mathrm{d}x \nonumber
%&\leq  \frac{(p-1) C_V  C_0^{p-2}}{C' c_0^{p-2}} \frac{1}{|R|^{\sigma p}} \left( \frac{2}{N+s-2} \right)^2 \int_{B^\varepsilon_R} |\nabla u +\nabla u_{\lambda_0+\varepsilon}|^{p-2} |\nabla(u-u_{\lambda_0+\varepsilon})^+|^2 \mathrm{d}x,\nonumber\\
%&\,\,+\frac{ p C C_0^{p^{\star}-2}}{C'  c_0^{p^{\star}-2}} \|u\|_{L^{p^{\star}}(\R^N)}^\frac{p(N-2p)}{N-p}  \frac{1}{|R|^{\beta_2}}\left( \frac{2}{N+s-2} \right)^2   \int_{B^\varepsilon_R}  (|\nabla u|+|\nabla u_{\lambda_0+\varepsilon}|)^{p-2} |\nabla(u-u_{\lambda_0+\varepsilon})^+|^2 \mathrm{d}x. \nonumber\\
%&\,\,+ \frac{(p-1)C_{2} \|u\|_\infty^{p-2}}{C'} C_p^2(S_\delta^\varepsilon) \int_{S_\delta^\varepsilon} (|\nabla u|+|\nabla u_{\lambda_0+\varepsilon}|)^{p-2} |\nabla(u-u_{\lambda_0+\varepsilon})^+|^2 \mathrm{d}x. \nonumber\\
%&\,\,+ \frac{p C \|u\|_\infty^{p^{\star}-2}}{C'} \|u\|_{L^{p^{\star}}(\R^N)}^\frac{p(N-2p)}{N-p} C_p^2(S_\delta^\varepsilon) \int_{S_\delta^\varepsilon} (|\nabla u|+|\nabla u_{\lambda_0+\varepsilon}|)^{p-2} |\nabla(u-u_{\lambda_0+\varepsilon})^+|^2 \mathrm{d}x.
\end{align}
where the positive constants $C_0, c'_0$ come from the sharp asymptotic estimates \eqref{eq0806} and \eqref{eq0806+} in Theorem \ref{th2.1-}, $\overline{C}$ comes from Lemma \ref{tip}.

\smallskip

To estimate the integral in $S_\delta^\varepsilon$, we will use the weighted Poincar\'{e} type inequality in Lemma \ref{lm.3}. For $x\in S_\delta^\varepsilon$, we let $\eta(x):=(u-u_{\lambda_0+\varepsilon})^+$. By the definition of $S_\delta^\varepsilon$ and $\eta(x)$, we have $\eta(x)=0$ on $\partial\Sigma_{\lambda_0+\varepsilon} \cup (\Sigma_{\lambda_0-\delta}\cap B_R)$, then we can define $\eta(x+te_{1}):=0$ for all $x\in \partial\Sigma_{\lambda_0+\varepsilon}\cap B_{R}$ and $t>0$. For all $x\in S_\delta^\varepsilon$, we can write
$$ \eta(x)=-\int_{0}^{+\infty}\frac{\partial \eta}{\partial x_1}(x+te_{1}) \mathrm{d}t=-\int_{0}^{\lambda_0+\varepsilon-x_{1}}\frac{\partial \eta}{\partial x_1}(x+te_{1}) \mathrm{d}t,$$
which in conjunction with integrating on $x_{2},\cdots,x_{N}$ and finite covering theorem imply that, for some $C>0$,
$$ |\eta(x)|\leq C\int_{S_\delta^\varepsilon} \frac{|\nabla \eta(y)|}{|x-y|^{N-1}}\mathrm{d}y, \quad\quad \,\,\,\,\forall \,\, x\in S_\delta^\varepsilon.$$
Thus $\eta$ satisfies \eqref{eq10.25.52}  with $\Omega=S_\delta^\varepsilon$. From Lemma \ref{lm.4}, Corollary \ref{re2333} or Remark \ref{rem0}, one has that $\rho=|\nabla u|^{p-2}$ satisfies \eqref{eq10.25.2}. So by the weighted Poincar\'{e} type inequality in Lemma \ref{lm.3}, we obtain
\begin{align}\label{eq1002}
\int_{S_\delta^\varepsilon} [(u-u_{\lambda_0+\varepsilon})^+]^2 \mathrm{d}x \leq C(S_\delta^\varepsilon) \int_{S_\delta^\varepsilon} (|\nabla u|+|\nabla u_{\lambda_0+\varepsilon}|)^{p-2} |\nabla(u-u_{\lambda_0+\varepsilon})^+|^2 \mathrm{d}x,
\end{align}
where $C(S_\delta^\varepsilon)\to 0$ if $|S_\delta^\varepsilon|\to 0$. By entirely similar way, we can also derive that
\begin{align}\label{eq1002'}
\int_{S_\delta^\varepsilon} [(v-v_{\lambda_0+\varepsilon})^+]^2 \mathrm{d}x \leq C(S_\delta^\varepsilon) \int_{S_\delta^\varepsilon} (|\nabla v|+|\nabla v_{\lambda_0+\varepsilon}|)^{p-2} |\nabla(v-v_{\lambda_0+\varepsilon})^+|^2 \mathrm{d}x.
\end{align}

Collecting \eqref{eq1001} and \eqref{eq1002}, we deduce
\begin{align}\label{eq1003}
&\quad \int_{\Sigma_{\lambda_0+\varepsilon}} (|\nabla u|+|\nabla u_{\lambda_0+\varepsilon}|)^{p-2} |\nabla(u-u_{\lambda_0+\varepsilon})^+|^2 \mathrm{d}x \\
&\leq \frac{C_0(2\overline{C}_1+\overline{C}_2)}{2C'(c'_0)^{p-2}R^{\beta_1}} \left( \frac{2}{N+s-2} \right)^2 \int_{\Sigma_{\lambda_0 + \varepsilon}}\left(|\nabla u| +|\nabla u_{\lambda_0+\varepsilon}|\right)^{p-2} |\nabla(u-u_{\lambda_0+\varepsilon})^+|^2 \mathrm{d}x \nonumber\\
&\quad +\frac{C_0 \overline{C}_2}{2C'(c'_0)^{p-2}R^{\beta_1}} \left( \frac{2}{N+s-2} \right)^2  \int_{\Sigma_{\lambda_0 + \varepsilon}}  (|\nabla v|+|\nabla v_{\lambda_0+\varepsilon}|)^{p-2} |\nabla(v-v_{\lambda_0+\varepsilon})^+|^2 \mathrm{d}x,  \nonumber \\
&\quad + \frac{2\overline{C}_1\|v\|^\beta_{\infty}+\overline{C}_2\|u\|_{\infty}^\alpha}{2C'} \cdot C(S_\delta^\varepsilon) \int_{S_\delta^\varepsilon} (|\nabla u|+|\nabla u_{\lambda_0+\varepsilon}|)^{p-2} |\nabla(u-u_{\lambda_0+\varepsilon})^+|^2 \mathrm{d}x \nonumber\\
&\quad + \frac{\overline{C}_2\|u\|^\alpha_{\infty}}{2C'} \cdot C(S_\delta^\varepsilon) \int_{S_\delta^\varepsilon} (|\nabla v|+|\nabla v_{\lambda_0+\varepsilon}|)^{p-2} |\nabla(v-v_{\lambda_0+\varepsilon})^+|^2 \mathrm{d}x. \nonumber
\end{align}

Arguing in the same way, we obtain
\begin{align}\label{eq1003-}
&\quad \int_{\Sigma_{\lambda_0+\varepsilon}} (|\nabla v|+|\nabla v_{\lambda_0+\varepsilon}|)^{p-2} |\nabla(v-v_{\lambda_0+\varepsilon})^+|^2 \mathrm{d}x \\
&\leq \frac{C_0(2\overline{C}_1+\overline{C}_2)}{2C'(c'_0)^{p-2}R^{\beta_1}} \left( \frac{2}{N+s-2} \right)^2 \int_{\Sigma_{\lambda_0 + \varepsilon}}\left(|\nabla v| +|\nabla v_{\lambda_0+\varepsilon}|\right)^{p-2} |\nabla(v-v_{\lambda_0+\varepsilon})^+|^2 \mathrm{d}x \nonumber\\
&\quad +\frac{C_0 \overline{C}_2}{2C'(c'_0)^{p-2}R^{\beta_1}} \left( \frac{2}{N+s-2} \right)^2  \int_{\Sigma_{\lambda_0 + \varepsilon}}  (|\nabla u|+|\nabla u_{\lambda_0+\varepsilon}|)^{p-2} |\nabla(u-u_{\lambda_0+\varepsilon})^+|^2 \mathrm{d}x,  \nonumber \\
&\quad + \frac{2\overline{C}_1\|u\|^\beta_{\infty}+\overline{C}_2\|v\|_{\infty}^\alpha}{2C'} \cdot C(S_\delta^\varepsilon) \int_{S_\delta^\varepsilon} (|\nabla v|+|\nabla v_{\lambda_0+\varepsilon}|)^{p-2} |\nabla(v-v_{\lambda_0+\varepsilon})^+|^2 \mathrm{d}x \nonumber\\
&\quad + \frac{\overline{C}_2\|v\|^\alpha_{\infty}}{2C'} \cdot C(S_\delta^\varepsilon) \int_{S_\delta^\varepsilon} (|\nabla u|+|\nabla u_{\lambda_0+\varepsilon}|)^{p-2} |\nabla(u-u_{\lambda_0+\varepsilon})^+|^2 \mathrm{d}x. \nonumber
\end{align}

So we have
\begin{align}\label{eq.u.v.}
    &\quad \int_{\Sigma_{\lambda_0 + \varepsilon}}  (|\nabla u|+|\nabla u_{\lambda_0+\varepsilon}|)^{p-2} |\nabla(u-u_{\lambda_0+\varepsilon})^+|^2 \mathrm{d}x \\
    &\quad + \int_{\Sigma_{\lambda_0 + \varepsilon}} (|\nabla v|+|\nabla v_{\lambda_0+\varepsilon}|)^{p-2} |\nabla(v-v_{\lambda_0+\varepsilon})^+|^2 \mathrm{d}x \nonumber\\
    &\leq \frac{C_0(\overline{C}_1+\overline{C}_2)}{C'(c'_0)^{p-2}R^{\beta_1}} \left( \frac{2}{N+s-2} \right)^2 \int_{\Sigma_{\lambda_0 + \varepsilon}}\left(|\nabla u| +|\nabla u_{\lambda_0+\varepsilon}|\right)^{p-2} |\nabla(u-u_{\lambda_0+\varepsilon})^+|^2 \mathrm{d}x \nonumber\\
    &\,\,+\frac{C_0(\overline{C}_1+\overline{C}_2)}{C'(c'_0)^{p-2}R^{\beta_1}} \left( \frac{2}{N+s-2} \right)^2  \int_{\Sigma_{\lambda_0 + \varepsilon}}  (|\nabla v|+|\nabla v_{\lambda_0+\varepsilon}|)^{p-2} |\nabla(v-v_{\lambda_0+\varepsilon})^+|^2 \mathrm{d}x \nonumber\\
&\,\,+\frac{2\overline{C}_1\|v\|^\beta_{\infty}+\overline{C}_2(\|v\|^\alpha_{\infty}+\|u\|_{\infty}^\alpha)}{2C'} \cdot C(S_\delta^\varepsilon) \int_{S_\delta^\varepsilon} (|\nabla u|+|\nabla u_{\lambda_0+\varepsilon}|)^{p-2} |\nabla(u-u_{\lambda_0+\varepsilon})^+|^2 \mathrm{d}x \nonumber\\
&\,\,+\frac{2\overline{C}_1\|u\|^\beta_{\infty}+\overline{C}_2(\|u\|^\alpha_{\infty}+\|v\|_{\infty}^\alpha)}{2C'} \cdot C(S_\delta^\varepsilon) \int_{S_\delta^\varepsilon} (|\nabla v|+|\nabla v_{\lambda_0+\varepsilon}|)^{p-2} |\nabla(v-v_{\lambda_0+\varepsilon})^+|^2 \mathrm{d}x. \nonumber
\end{align}
Now we take care of the variable parameters $R, \delta, \overline{\varepsilon}$. First we fix $R$ large so that
$$\frac{C_0(\overline{C}_1+\overline{C}_2)}{C'(c'_0)^{p-2}R^{\beta_1}} \left( \frac{2}{N+s-2} \right)^2 < \frac{1}{4} .$$
Then, recalling that $|S_\delta^\varepsilon|\to 0$ if $\delta\to 0$ and $\varepsilon\to 0$, we choose $\delta$ and $\overline{\varepsilon}$ small such that
$$\frac{2\overline{C}_1\|v\|^\beta_{\infty}+\overline{C}_2(\|v\|^\alpha_{\infty}+\|u\|_{\infty}^\alpha)}{2C'} \cdot C(S_\delta^\varepsilon) <\frac{1}{4}$$
and
$$\frac{2\overline{C}_1\|u\|^\beta_{\infty}+\overline{C}_2(\|u\|^\alpha_{\infty}+\|v\|_{\infty}^\alpha)}{2C'} \cdot C(S_\delta^\varepsilon)<\frac{1}{4}$$
for any $0\leq\varepsilon\leq\overline{\varepsilon}$. With such choice of parameters, by \eqref{eq.u.v.}, we have
$$\int_{\Sigma_{\lambda_0+\varepsilon}} (|\nabla u|+|\nabla u_{\lambda_0+\varepsilon}|)^{p-2} |\nabla(u-u_{\lambda_0+\varepsilon})^+|^2 \mathrm{d}x + \int_{\Sigma_{\lambda_0+\varepsilon}} (|\nabla v|+|\nabla v_{\lambda_0+\varepsilon}|)^{p-2} |\nabla(v-v_{\lambda_0+\varepsilon})^+|^2 \mathrm{d}x\leq 0,$$
which implies $(u-u_{\lambda_0+\varepsilon})^+=0$ and $(v-v_{\lambda_0+\varepsilon})^+=0$ in $\Sigma_{\lambda_0+\varepsilon}$ for any $0\leq\varepsilon\leq\overline{\varepsilon}$. This is a contradiction with the definition of $\lambda_0$.
%Consequently we have proved that necessarily
Thus, we prove that
$$ u=u_{\lambda_0} \quad \mbox{and} \quad v=v_{\lambda_0} \quad \quad\quad \,\,\mbox{in}\,\,\Sigma_{\lambda_0}.$$
Furthermore, $ u<u_{\lambda}$ in $\Sigma_\lambda$ for any $\lambda < \lambda_0$ and consequently $u$ is strictly monotone increasing in $\Sigma_{\lambda_0}$ so that $\frac{\partial u}{\partial x_{1}}\geq 0$ in $\Sigma_{\lambda_0}$. And we have the same consequence for v.

\medskip

{\bf Step 3.} $u$ and $v$ are radially symmetric and strictly decreasing about some point $x_0\in\R^N$.

By the strong maximum principle of the linearized operator \cite[Theorem 1.2]{LDBS}, it actually follows that
\begin{equation}\label{sd}
  \frac{\partial u}{\partial x_{1}}> 0 \quad \mbox{and} \quad \frac{\partial v}{\partial x_{1}}> 0\qquad \text{in} \,\, \Sigma_{\lambda_0}.
\end{equation}

%By the strong maximum principle for the linearized operator
%\cite[Theorem 1.2]{LDBS} it follows that actually $u_{x_1} > 0$ in $\Sigma_{\lambda_0}$.

It is known that for any direction $\nu$, $u$ and $v$ are symmetric about the plane $T^\nu_{\lambda_0(\nu)}$. Now consider all the $N$ orthogonal directions $\{e_k\}_{k=1}^N$ in $\R^N$, and one can obtain in entirely similar way that $u$ and $v$ are strictly increasing in each direction $e_k$ and $\frac{\partial u}{\partial e_{k}}, \frac{\partial v}{\partial e_{k}}>0$ in $\Sigma^{e_k}_{\lambda_0(e_k)}$ and symmetric about the plane $T^{e_k}_{\lambda_0(e_k)}$. This shows that $u$ and $v$ are symmetric with respect to a point $x_0\in\R^N$ $(x_0 = \bigcap^N_{k=1}T^{e_k}_{\lambda_0(e_k)})$, which is the unique critical point for $u$ and $v$. Furthermore, by exploiting the moving plane procedure in any direction $\nu\in S^{N-1}$, we finally get that $u$ and $v$ are radial and (strictly) radially decreasing w.r.t. $x_{0}$. This completes our proof of radial symmetry and decreasing for $2<p<N$.
\end{proof}

\subsection{The singular elliptic case $1<p<2$}\label{sc4.2}

In this sub-section, we carry out our proof of Theorem \ref{th2} in the singular elliptic case $1<p<2$ via the moving planes method. In the following, we define the set
$$\overline{\Lambda}(\nu):=\{ \lambda\in\R \mid u\geq u_\mu^\nu\,\, \mbox{and} \,\, v\geq v_\mu^\nu\,\quad \mbox{in}\,\,\Sigma_\mu^\nu,\,\,\,\,\forall \,\, \mu\geq\lambda \},$$
and, if $\overline{\Lambda}(\nu)\neq\emptyset$, we define
$$ \lambda_0(\nu):=\inf \overline{\Lambda}(\nu).$$

\subsubsection{Weak comparison principles}
For the singular elliptic case $1<p<2$, in order to show the radial symmetry and strictly radial monotonicity of the weak solution $u$ to \eqref{eq1.1} via the method of moving planes, we need weak comparison principles in unbounded domains. To this end, we will use the following Poincar\'{e} type inequality from \cite{LDMR} (see Lemma \ref{PTI}).

Let us consider a ball $B(P,R)$ with center $P\in T_\lambda^\nu$ and radius $R$. Set
$$B_\lambda^\nu:=B(P,R)\cap\Sigma_\lambda^\nu,$$
and for any $A\subset\R^N$, let
$$ M_A:=\max\{\sup_A (|\nabla u|+|\nabla u^{\nu}_{\lambda}|), \sup_A (|\nabla v|+|\nabla v^{\nu}_{\lambda}|)\}.$$
For any $A\subset \Sigma_\lambda^\nu$, define
$$A^\prime:=R_\lambda^\nu(A).$$

We apply the following Poincar\'{e} type inequality to functions on $B_\lambda^\nu$, vanishing on $T^\nu_\lambda$.
\begin{lem}[Poincar\'{e} type inequality, \cite{LDMR}]\label{PTI}
Let $w\in W^{1,q}_{loc} (\R^N)$ with $1\leq q< \infty$ vanish on $T^\nu_\lambda$, and let $(supp\,w)\cap B_\lambda^\nu =A_1\cup A_2$ for some disjoint measurable sets $A_i$. Then there exists a constant $C=C(N)$ such that
$$ \| w \|_{L^q(B_\lambda^\nu)} \leq C |B_\lambda^\nu\cap supp \, w|^\frac{1}{Nq} \left( |A_1|^\frac{1}{Nq^\prime} \| \nabla w \|_{L^q(A_1)} + |A_2|^\frac{1}{Nq^\prime} \| \nabla w \|_{L^q(A_2)}  \right),$$
where $ \frac{1}{q}+ \frac{1}{q^\prime} =1 $.
\end{lem}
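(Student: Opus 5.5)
The plan is to reduce the statement to the one-dimensional case by a slicing argument along the direction $\nu$. First I will prove the inequality for $N=1$ and then integrate over lines orthogonal to $T^\nu_\lambda$. By rotation and translation invariance, assume $\nu=e_1$ and $\lambda=0$, so $T^\nu_\lambda=\{x_1=0\}$ and $B_\lambda^\nu=B(P,R)\cap\{x_1<0\}$ with $P\in T_\lambda$. By density I may take $w$ smooth up to the boundary with $w=0$ on $\{x_1=0\}$. The set where $\nabla w$ is considered can be replaced by $\operatorname{supp} w$, since $\nabla w=0$ a.e. outside it.

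For a point $x=(x_1,x')\in B_\lambda^\nu$, the segment from $x$ to $(0,x')$ in the $e_1$ direction lies in $B_\lambda^\nu$, because $B(P,R)$ is convex and symmetric about $T_\lambda$ with $P\in T_\lambda$. So $|w(x)|\le\int_{x_1}^0|\partial_1 w(t,x')|\,dt$. A one-dimensional bound along lines would only give $|B|^{1/N}$-type factors through the length of the chord, which is too weak. Instead I will use the standard potential estimate on the convex set $B_\lambda^\nu$, exploiting that $w$ vanishes on a large portion of the boundary (the flat part). Averaging the above representation over directions in a cone, or using the Gilbarg–Trudinger Lemma 7.16-type argument adapted to vanishing on the flat face, gives
\[
|w(x)|\le C(N)\int_{B_\lambda^\nu\cap\operatorname{supp} w}\frac{|\nabla w(y)|}{|x-y|^{N-1}}\,dy .
\]
Concretely, I will extend $w$ by odd reflection across $T_\lambda$ to the whole ball, giving $\tilde w\in W^{1,q}(B(P,R))$ with zero mean. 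The Gilbarg–Trudinger estimate $|\tilde w(x)-\bar{\tilde w}|\le C\frac{d^N}{|B|}\int |\nabla\tilde w||x-y|^{1-N}dy$ then applies with $d^N/|B|$ a dimensional constant.

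Next I split the gradient as $|\nabla w|=|\nabla w|\chi_{A_1}+|\nabla w|\chi_{A_2}$, together with their reflected copies. Consider the Riesz-type operator $V_\mu f(x)=\int_\Omega|x-y|^{N(\mu-1)}|f(y)|dy$ with $\mu=1/N$, restricted to $\Omega=S\cap\ldots$. I will apply Gilbarg–Trudinger Lemma 7.12 in the form $\|V_{1/N}f\|_{L^q(\Omega')}\le C|\Omega'|^{1/(Nq)}|\Omega''|^{1/(Nq')}\|f\|_{L^q(\Omega'')}$. This holds because $\int_{\Omega'}|x-y|^{1-N}dx\le C|\Omega'|^{1/N}$ and $\int_{\Omega''}|x-y|^{1-N}dy\le C|\Omega''|^{1/N}$, followed by Schur/Hölder interpolation with exponents $1/q$ and $1/q'$. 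Here I take $\Omega'=B_\lambda^\nu\cap\operatorname{supp} w$ (the region where $w\ne0$) and $\Omega''=A_i$ (together with its reflection, whose measure is the same). Summing over $i=1,2$ yields the claim.

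The main obstacle is the pointwise potential bound with the integration restricted to $\operatorname{supp} w$ and without a mean-value term. This requires careful use of the vanishing on $T_\lambda$, via the odd reflection, which kills the mean. The reflection doubles the sets $A_i$, but this only changes the constant. The rest is the standard Hölder/Schur estimate for Riesz potentials.
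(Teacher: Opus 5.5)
The paper does not prove this lemma; it is imported from \cite{LDMR}, so there is no in-paper argument to compare with. Your final argument is correct and self-contained, and it follows the standard route for this inequality.

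The odd reflection $\tilde w$ of $w$ across $T^\nu_\lambda$ lies in $W^{1,q}(B(P,R))$; this is exactly where the vanishing on $T^\nu_\lambda$ is used. Since $P\in T^\nu_\lambda$, $\tilde w$ has zero mean on the convex ball. Gilbarg--Trudinger Lemma 7.16 with $S=B(P,R)$ then gives $|w(x)|\le \frac{2^N}{N\omega_N}\int_{B(P,R)}|x-y|^{1-N}|\nabla\tilde w(y)|\,dy$, with no mean term.

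Next, split $\nabla\tilde w$ over $A_i\cup R^\nu_\lambda(A_i)$ and apply the Schur test. Use $\sup_x\int_{\Omega''}|x-y|^{1-N}dy\le C(N)|\Omega''|^{1/N}$ and $\sup_y\int_{\Omega'}|x-y|^{1-N}dx\le C(N)|\Omega'|^{1/N}$, with $\Omega'=B^\nu_\lambda\cap\mathrm{supp}\,w$. This produces exactly the exponents $\frac{1}{Nq}$ and $\frac{1}{Nq'}$, with a constant depending only on $N$ and uniform in $q$. For $q=1$ the Hölder step reduces to Fubini.

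A few clean-ups are worth making.

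(1) Delete the opening announcement of a one-dimensional slicing reduction. You correctly discard it, since chord lengths cannot produce the measure factors $|A_i|^{1/(Nq')}$.

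(2) Drop the density reduction. It is unnecessary, and at the level of the final inequality it would be unsafe: smooth approximants do not keep their gradients supported in $A_1\cup A_2$. Apply Lemma 7.16 directly to $\tilde w\in W^{1,1}(B(P,R))$, and use that $\nabla w=0$ a.e. on $\{w=0\}$ to restrict the potential to $\mathrm{supp}\,w$.

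(3) The reflected copies can be avoided altogether. For $x,y\in\Sigma^\nu_\lambda$ one has $|x-R^\nu_\lambda(y)|\ge|x-y|$, and $|\nabla\tilde w(R^\nu_\lambda y)|=|\nabla w(y)|$. Hence, for $x\in B^\nu_\lambda$, the reflected half of the potential is bounded by the unreflected half. Keeping the doubled sets is also fine, since it only costs a factor at most $2$.

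(4) The Riesz-potential bound you need is the two-set version of Gilbarg--Trudinger Lemma 7.12, not the lemma as stated there. Since you prove it via the Schur test, just say so rather than citing 7.12 verbatim.
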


Now, with the help of Lemma \ref{PTI}, we are able to prove the following weak comparison principle.
\begin{lem}\label{lm.4.1}
Assume $N>2$, $1<p<2$ and $(u, v)$ is a pair of positive weak solutions of \eqref{eq1.1}. Let $R_0$ be the same as in Theorem \ref{th2.1-}.

{\bf $(i)$} There exists $R_1>R_0$, $R_1$ depending on $N,p,R_0$, such that if
$$\Sigma_\lambda^\nu \cap R_\lambda^\nu(B_{R_1}) = \emptyset,$$
then $u^{\nu}_{\lambda} \leq u$ and $v^{\nu}_{\lambda} \leq v$ in $\Sigma_\lambda^\nu$;

{\bf $(ii)$} Let $\Sigma_\lambda^\nu \cap R_\lambda^\nu(B_{R_1}) \neq \emptyset$ and let $R_2=R_2(R_1,\lambda,\nu)>2R_{1}$ be such that $\Sigma_\lambda^\nu \cap R_\lambda^\nu(B_{R_1}) \subset B(P_\lambda^\nu,R_2)$, where $P_\lambda^\nu$ is the projection of the origin on $T_\lambda^\nu$.
Then there exists $R>R_2$, depending on $R_2$, such that, for any $\overline{R}\geq R$, there exist positive numbers $\delta$ and $M$, depending $\overline{R},\lambda,p,N$ and the $L^\infty$--norms of $u,v,|\nabla u|,|\nabla v|$, with the property that whenever
$$ (supp(u^{\nu}_{\lambda}-u)^+\cup supp(v^{\nu}_{\lambda}-v)^+)\cap\Sigma^{\nu}_{\lambda}\cap B(P_\lambda^\nu,\overline{R})=A \cup B,$$
with $|A\cap B|=0$, $M_A<M$ and $|A|+|B|<\delta$, we have
$$ u^{\nu}_{\lambda} \leq u\,\,\,\,\mbox{and}\,\,\,\, v^{\nu}_{\lambda} \leq v \,\,\quad \mbox{in}\,\,\Sigma_\lambda^\nu.$$
\end{lem}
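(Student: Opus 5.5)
We test the system, written for the reflected pair $(u^\nu_\lambda, v^\nu_\lambda)$ and for $(u,v)$, with $w_1:=(u^\nu_\lambda-u)^+\chi_{\Sigma^\nu_\lambda}$ and $w_2:=(v^\nu_\lambda-v)^+\chi_{\Sigma^\nu_\lambda}$; these are admissible since $u=u^\nu_\lambda$ on $T^\nu_\lambda$. Subtracting the identities and using the first inequality in \eqref{eq0802} gives
\begin{align*}
C'\int_{\Sigma^\nu_\lambda}(|\nabla u|+|\nabla u^\nu_\lambda|)^{p-2}|\nabla w_1|^2\,\mathrm{d}x\le \int_{\Sigma^\nu_\lambda}\big((u^\nu_\lambda)^\alpha (v^\nu_\lambda)^\beta-u^\alpha v^\beta\big)w_1\,\mathrm{d}x,
\end{align*}
and the analogous inequality for $w_2$. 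Since $1<p<2$ and $|\nabla u|,|\nabla u^\nu_\lambda|\le \|\nabla u\|_\infty$, the weight is bounded below, so the left side controls $\|\nabla w_1\|_{L^2}^2$ locally. We split the right side as in \eqref{eq0804}: $\big((u^\nu_\lambda)^\alpha-u^\alpha\big)(v^\nu_\lambda)^\beta w_1+u^\alpha\big((v^\nu_\lambda)^\beta-v^\beta\big)w_1$. On the support of $w_i$ the difference quotients of $t^\alpha,t^\beta$ are controlled by Lemma \ref{tip}; this uses the two-sided bounds \eqref{eq0806}, which keep $|x|^{\frac{N-p}{p-1}}u$ and $|x|^{\frac{N-p}{p-1}}u^\nu_\lambda$ in $[c_0,C_0]$ far out. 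Near the origin the bound follows from $u\ge c>0$ on compacts. The result is a bound by $\int \mathcal W(x)(w_1^2+w_2^2)$, where $\mathcal W\lesssim |x|^{-\frac{N-p}{p-1}(p^\star-2)}$ at infinity and $\mathcal W\le C$ on bounded sets.

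\emph{Part (i).} If $\Sigma^\nu_\lambda\cap R^\nu_\lambda(B_{R_1})=\emptyset$, then on $\operatorname{supp} w_i$ we have $u\le u^\nu_\lambda$ with $x_\lambda^\nu$ far away. Hence on $\Sigma^\nu_\lambda$ we may use $|x|\ge R_1$ together with $|\nabla u|\ge c'_0|x|^{-\frac{N-1}{p-1}}$ and the upper bound $|\nabla u|+|\nabla u^\nu_\lambda|\le C|x|^{-\frac{N-1}{p-1}}$. Because $p-2<0$, the latter gives the lower weight bound $(|\nabla u|+|\nabla u^\nu_\lambda|)^{p-2}\ge c|x|^{s}$ with $s=-\frac{N-1}{p-1}(p-2)>0$. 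Applying Lemma \ref{HDSI} with $q=2$ and this $s$ (the functions vanish at infinity by \eqref{eq0806}) gives
\begin{align*}
\int \mathcal W w_i^2\le \frac{C}{R_1^{\beta_1}}\int |x|^{s-2}w_i^2\le \frac{C}{R_1^{\beta_1}}\int (|\nabla u|+|\nabla u^\nu_\lambda|)^{p-2}|\nabla w_i|^2 ,
\end{align*}
where $\beta_1=\frac{N-p}{p-1}(p^\star-2)+s-2>0$, as in Subsection \ref{sc4.1}. The cross terms are absorbed by Young's inequality. Summing the $u$- and $v$-inequalities and choosing $R_1$ large forces $\nabla w_1=\nabla w_2=0$, hence $w_i\equiv0$.

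\emph{Part (ii).} We split $\Sigma^\nu_\lambda$ into the exterior $\Sigma^\nu_\lambda\setminus B(P^\nu_\lambda,\overline R)$ and the interior $B^\nu_\lambda:=\Sigma^\nu_\lambda\cap B(P^\nu_\lambda,\overline R)$. Choosing $R>R_2$, the exterior lies outside $B_{R_1}$, and it is handled as in (i) with a coefficient $C\overline R^{-\beta_1}<\frac14$. On the interior, $\mathcal W\le C(\overline R)$, and we apply Lemma \ref{PTI} with $q=2$ to $w_i$ and the decomposition $A\cup B$:
\begin{align*}
\|w_i\|_{L^2(B^\nu_\lambda)}^2\le C\,\delta^{\frac1N}\Big(|A|^{\frac1N}\|\nabla w_i\|^2_{L^2(A)}+|B|^{\frac1N}\|\nabla w_i\|^2_{L^2(B)}\Big).
\end{align*}
On $A$, where $M_A<M$, the weight satisfies $(|\nabla u|+|\nabla u^\nu_\lambda|)^{p-2}\ge M^{p-2}$, which is large when $M$ is small. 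On $B$ we use the global lower bound $\ge(2\|\nabla u\|_\infty)^{p-2}$ together with the smallness $|B|<\delta$. Both contributions are therefore bounded by a small multiple of the left side: $C(\overline R)\delta^{2/N}M^{2-p}$ on $A$ and $C(\overline R)\delta^{2/N}\|\nabla u\|_\infty^{2-p}$ on $B$. Choosing $\delta$ small and $M\le 1$ makes everything absorbable. We again sum the two components, since the coupling mixes $w_1$ and $w_2$, and conclude that $w_1=w_2=0$.

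The main obstacle is part (i): the weight $(|\nabla u|+|\nabla u^\nu_\lambda|)^{p-2}$ must be bounded below by $|x|^s$ uniformly on all of $\Sigma^\nu_\lambda$. This needs the sharp upper gradient bound \eqref{eq0806+} at $x$ and at $x^\nu_\lambda$, using that $|x^\nu_\lambda|\ge|x|$ there, as well as handling $\Sigma^\nu_\lambda$ near the plane. I would first try to confine the argument to the region $|x|\ge R_1$, which the hypothesis guarantees.
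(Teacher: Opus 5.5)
Your part (ii) follows the paper's route and is fine. The steps are: Hardy--Sobolev absorption outside $B(P^\nu_\lambda,\overline R)$; Lemma \ref{PTI} on $A\cup B$; the factor $M^{2-p}$ on $A$ and a bound $\|\nabla u\|_\infty^{2-p}$ on $B$; and summing the $u$- and $v$-inequalities. Keeping the prefactor $|\mathrm{supp}\,w_i\cap B^\nu_\lambda|^{1/N}\le\delta^{1/N}$ even gives a slightly cleaner bound than the paper's $|A^\nu_\lambda|^{1/N}$.

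The gap is in part (i). You assert that the hypothesis $\Sigma^\nu_\lambda\cap R^\nu_\lambda(B_{R_1})=\emptyset$ lets you use $|x|\ge R_1$ on $\Sigma^\nu_\lambda$, and in your last paragraph that it ``guarantees'' confinement to $|x|\ge R_1$. This is false. The reflected ball is $B_{R_1}(2\lambda\nu)$, so the hypothesis is equivalent to $\lambda\ge R_1$, and then $B_{R_1}\subset\Sigma^\nu_\lambda$. The hypothesis only gives $|x^\nu_\lambda|\ge R_1$ for $x\in\Sigma^\nu_\lambda$. So your estimate $\int\mathcal W w_i^2\le CR_1^{-\beta_1}\int|x|^{s-2}w_i^2$ is unjustified. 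The factor $R^{-\beta_1}$ comes from $\sup|x|^{-\beta_1}$ over the support of $w_i$, which is not small if the support reaches the origin. Near the origin $\mathcal W$ is of order one, and unlike in (ii) there is no smallness of the support's measure to exploit. Note also that the weight lower bound is not the issue for $1<p<2$, since $(|\nabla u|+|\nabla u^\nu_\lambda|)^{p-2}\ge(2\|\nabla u\|_\infty)^{p-2}$ everywhere.

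The missing idea, which is what the paper does, is a value comparison turning ``$x^\nu_\lambda$ is far'' into ``$w_i=0$ near the origin'':
\begin{itemize}
\item First fix $R>R_0$ so large that $\frac{C_0(\overline C_1+\overline C_2)}{C'(2C'_0)^{p-2}}R^{-\beta_1}\bigl(\frac{2}{N+s-2}\bigr)^2<1$.
\item Then, since $u,v>0$ are continuous and tend to $0$ by \eqref{eq0806}, choose $R_1=R_1(R)>R$ with $\inf_{B_R}u\ge\sup_{\R^N\setminus B_{R_1}}u$, and likewise for $v$.
\item For $x\in\Sigma^\nu_\lambda\cap B_R$ this gives $u^\nu_\lambda(x)=u(x^\nu_\lambda)\le\sup_{\R^N\setminus B_{R_1}}u\le u(x)$. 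Hence $\mathrm{supp}\,w_i\cap\Sigma^\nu_\lambda\subset\Sigma^\nu_\lambda\setminus B_R$.
\item On that set $|x|\ge R>R_0$, and $|x^\nu_\lambda|^2=|x|^2+4\lambda(\lambda-x\cdot\nu)>|x|^2$ because $\lambda>0$. So both $\mathcal W\le C R^{-\beta_1}|x|^{s-2}$ and $(|\nabla u|+|\nabla u^\nu_\lambda|)^{p-2}\ge(2C'_0)^{p-2}|x|^s$ hold there, and your absorption argument closes.
\end{itemize}
The order of choices matters: $R$ is fixed by the absorption requirement, and $R_1$ is chosen afterwards depending on $R$ (and on $u,v$), not the reverse. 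Also, for $1<p<2$ the lower gradient bound plays no role in (i); only the upper bound \eqref{eq0806+} at $x$ and at $x^\nu_\lambda$ is used.
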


\begin{proof}

%We claim that there exists $R_1 > R_0$, with $R_0$ be as in Theorem \ref{th2.1}, such that if
%$$ \Sigma_\lambda \cap R_\lambda^\nu(B_{R_1}) = \emptyset $$
%then $u\geq u_\lambda$ in $\Sigma_\lambda$.

Let us take $(u^{\nu}_{\lambda}-u)^+\chi_{\Sigma^{\nu}_\lambda}$ and $(v^{\nu}_{\lambda}-v)^+\chi_{\Sigma^{\nu}_\lambda}$ as the test functions in the Definition \ref{weak} of weak solutions. Proceeding as in the deduction of \eqref{eq0803} and \eqref{eq0803-}, we get
\begin{align}\label{eq1004}
&\quad \int_{\Sigma_\lambda^\nu} (|\nabla u|+|\nabla u^{\nu}_{\lambda}|)^{p-2} |\nabla(u^{\nu}_{\lambda}-u)^+|^2 \mathrm{d}x \\
&\leq \frac{1}{C'} \int_{\Sigma_\lambda^\nu} ( (u^\nu_\lambda)^\alpha (v^\nu_\lambda)^\beta-u^\alpha v^\beta)(u^{\nu}_{\lambda} - u)^+  \mathrm{d}x  \nonumber\\
&= \frac{1}{C'}\int_{\Sigma^\nu_\lambda} ((u^\nu_{\lambda})^\alpha-u^\alpha)(v^\nu_{\lambda})^\beta (u^\nu_{\lambda}-u)^+ \mathrm{d}x + \frac{1}{C'}\int_{\Sigma^\nu_\lambda}u^\alpha((v^\nu_{\lambda})^\beta -v^{\beta}) (u^\nu_{\lambda}-u)^+ \mathrm{d}x  \nonumber \\
%&\leq \frac{p-1}{C'} \int_{\Sigma_\lambda}  V(x) u^{p-2} [(u-u_\lambda)^+]^2 \mathrm{d}x + \frac{1}{C'}\int_{\Sigma_\lambda} P_\lambda(x) u^{p-1}   (u-u_\lambda)^+ \mathrm{d}x\nonumber\\
&=:II_1 + II_2, \nonumber
\end{align}
\begin{align}\label{eq1004'}
&\quad \int_{\Sigma_\lambda^\nu} (|\nabla v|+|\nabla v^{\nu}_{\lambda}|)^{p-2} |\nabla(v^{\nu}_{\lambda}-v)^+|^2 \mathrm{d}x \\
&\leq \frac{1}{C'} \int_{\Sigma_\lambda^\nu} ( (v^\nu_\lambda)^\alpha (u^\nu_\lambda)^\beta-v^\alpha u^\beta)(v^{\nu}_{\lambda} - v)^+  \mathrm{d}x  \nonumber\\
&= \frac{1}{C'}\int_{\Sigma^\nu_\lambda} ((v^\nu_{\lambda})^\alpha-v^\alpha)(u^\nu_{\lambda})^\beta (v^\nu_{\lambda}-v)^+ \mathrm{d}x + \frac{1}{C'}\int_{\Sigma^\nu_\lambda}v^\alpha((u^\nu_{\lambda})^\beta -u^{\beta}) (v^\nu_{\lambda}-v)^+ \mathrm{d}x  \nonumber \\
%&\leq \frac{p-1}{C'} \int_{\Sigma_\lambda}  V(x) u^{p-2} [(u-u_\lambda)^+]^2 \mathrm{d}x + \frac{1}{C'}\int_{\Sigma_\lambda} P_\lambda(x) u^{p-1}   (u-u_\lambda)^+ \mathrm{d}x\nonumber\\
&=:II'_1 + II'_2. \nonumber
\end{align}

\medskip

\noindent {\bf Proof of $(i)$.} Let $R > R_0$, with $R_0$ be the same as in Theorem \ref{th2.1-}, to be determined later. By the sharp asymptotic estimates for $u$ in Theorem \ref{th2.1-}, there exists $R_1= R_1(R) > R$ such that
$$ \inf_{B_R} u \geq \sup_{\R^N\setminus B_{R_1}} u, \quad \mbox{and} \quad \inf_{B_R} v \geq \sup_{\R^N\setminus B_{R_1}} v,$$
where $B_{r}:=B_{r}(0)$ for any $r>0$. If $\lambda>0$ is sufficiently large such that
$$\Sigma_\lambda^\nu \cap R_\lambda^\nu(B_{R_1}) = \emptyset,$$
then we have
$$u\geq u^{\nu}_{\lambda} \quad \,\mbox{in}\,\, B_R, \,\,\quad \,\, supp(u^{\nu}_{\lambda}-u)^+\cap\Sigma^{\nu}_{\lambda}\subset \Sigma_\lambda^\nu\setminus B_R,$$
and
$$v\geq v^{\nu}_{\lambda} \quad \,\mbox{in}\,\, B_R, \,\,\quad \,\, supp(v^{\nu}_{\lambda}-v)^+\cap\Sigma^{\nu}_{\lambda}\subset \Sigma_\lambda^\nu\setminus B_R.$$

Since $1<p<2$, by Lemma \ref{tip}, Theorem \ref{th2.1-} and \eqref{a6},
%the fact that $u^p -1$ is convex in $u$ and the fact that $u_\lambda>u$
%in the support of $(u_\lambda-u)^+$, we obtain First, we estimate $II_1$. From Theorem \ref{th2.1} and Lemma \ref{rm.2},
we obtain that
\begin{align}\label{eq1005}
II_1 &= \frac{1}{C'} \int_{\Sigma_\lambda^\nu}   ((u^\nu
_{\lambda})^\alpha-u^\alpha)(v^\nu_{\lambda})^\beta (u^{\nu}_{\lambda}-u )^+ \mathrm{d}x \\
     &\leq \frac{C_0}{C'} \int_{\Sigma_\lambda^\nu\setminus B_R} |x^\nu_\lambda|^{-\frac{N-p}{p-1}\beta}|x^\nu_\lambda|^{-\frac{N-p}{p-1}\alpha}\cdot \frac{(|x^\nu_\lambda|^{\frac{N-p}{p-1}}u^\nu_\lambda)^\alpha-(|x^\nu_\lambda|^{\frac{N-p}{p-1}}u)^\alpha}{|x^\nu_\lambda|^{\frac{N-p}{p-1}}u^\nu_\lambda-|x^\nu_\lambda|^{\frac{N-p}{p-1}}u} |x^\nu_\lambda|^{\frac{N-p}{p-1}} [(u^{\nu}_{\lambda}-u )^+]^2 \mathrm{d}x \nonumber\\
     &\leq \frac{C_0 \overline{C}_1}{C'} \int_{\Sigma_\lambda^\nu\setminus B_R}  \frac{1}{|x^\nu_\lambda|^{\frac{N-p}{p-1}(p^\star-2)}} [(u^{\nu}_{\lambda}-u )^+]^2 \mathrm{d}x. \nonumber
\end{align}
 Similar to \eqref{eq0809}, using the fact that  $|x|<|x^{\nu}_{\lambda}|$ in $\Sigma^{\nu}_\lambda$ and the weighted Hardy-Sobolev inequality, we get
\begin{align}\label{eq10061}
II_1 &\leq \frac{C_0 \overline{C}_1}{C'} \sup_{\Sigma_\lambda^\nu\setminus B_R} \left(\frac{1}{|x|^{\beta_1}}\right)
      \int_{\Sigma_\lambda^\nu\setminus B_R} |x|^{s-2} [(u^{\nu}_{\lambda}-u )^+]^2 \mathrm{d}x \\
      &\leq \frac{C_0 \overline{C}_1}{C'} \sup_{\Sigma_\lambda^\nu\setminus B_R} \left(\frac{1}{|x|^{\beta_1}}\right)
      \left(\frac{2}{N+s-2}\right)^2 \int_{\Sigma_\lambda^\nu\setminus B_R} |x|^s |\nabla(u^{\nu}_{\lambda}-u )^+|^2 \mathrm{d}x, \nonumber
\end{align}
where $s =-\frac{N-1}{p-1}(p-2)>2-N$ and $\beta_1=(p^\star -2)\frac{N-p}{p-1} +s-2>0$. By Theorem \ref{th2.1-}, and the fact that $|x|<|x^{\nu}_{\lambda}|$ in $\Sigma^{\nu}_\lambda$, we have
$$|\nabla u|+|\nabla u^{\nu}_{\lambda}| \leq C'_0 \left( \frac{1}{|x|^\frac{N-1}{p-1}} + \frac{1}{|x^{\nu}_{\lambda}|^\frac{N-1}{p-1}} \right) \leq
  \frac{2C'_0}{|x|^\frac{N-1}{p-1}} \,\,\quad \mbox{in}\,\,\Sigma_\lambda^\nu\setminus B_R,$$
which implies
$$(|\nabla u|+|\nabla u^{\nu}_{\lambda}|)^{p-2} \geq
  \frac{(2C'_0)^{p-2}}{|x|^{\frac{N-1}{p-1}(p-2)}}=(2C'_0)^{p-2} |x|^s  \quad \,\,\mbox{in}\,\,\Sigma_\lambda^\nu\setminus B_R.$$
This yields
\begin{align}\label{eq1006}
II_1 \leq  \frac{ C_0\overline{C}_1}{C'(2C'_0)^{p-2}} \frac{1}{R^{\beta_1}}
           \left(\frac{2}{N+s-2}\right)^2\int_{\Sigma_\lambda^\nu} (|\nabla u|+|\nabla u^{\nu}_{\lambda}|)^{p-2} |\nabla(u^{\nu}_{\lambda}-u )^+|^2 \mathrm{d}x.
\end{align}

For $II_2$, arguing in the same way with Young inequality, we get
\begin{small}
\begin{align}\label{eq1007}
\quad II_2 &\leq \frac{C_0 \overline{C}_2}{C'} \int_{\Sigma_\lambda^\nu\setminus B_R}  \frac{1}{|x^\nu_\lambda|^{\frac{N-p}{p-1}(p^\star-2)}} (v^{\nu}_{\lambda}-v )^+(u^{\nu}_{\lambda}-u )^+ \mathrm{d}x \\
    &\leq \frac{ C_0\overline{C}_2}{2C'(2C'_0)^{p-2}} \frac{1}{R^{\beta_1}}
    \left(\frac{2}{N+s-2}\right)^2\int_{\Sigma_\lambda^\nu} (|\nabla u|+|\nabla u^{\nu}_{\lambda}|)^{p-2} |\nabla(u^{\nu}_{\lambda}-u )^+|^2 \mathrm{d}x \nonumber \\
    & \quad + \frac{ C_0\overline{C}_2}{2C'(2C'_0)^{p-2}} \frac{1}{R^{\beta_1}}
    \left(\frac{2}{N+s-2}\right)^2\int_{\Sigma_\lambda^\nu} (|\nabla v|+|\nabla v^{\nu}_{\lambda}|)^{p-2} |\nabla(v^{\nu}_{\lambda}-v )^+|^2 \mathrm{d}x. \nonumber
\end{align}\end{small}

We get
\begin{align}\label{eq1010}
        &\quad \int_{\Sigma_\lambda^\nu}(|\nabla u|+|\nabla u^{\nu}_{\lambda}|)^{p-2} |\nabla (u^{\nu}_{\lambda}-u)^+|^2 \mathrm{d}x \\
        &\leq \frac{C_0(2\overline{C}_1+\overline{C}_2)}{2C'(2C'_0)^{p-2}} \frac{1}{R^{\beta_1}}
    \left(\frac{2}{N+s-2}\right)^2\int_{\Sigma_\lambda^\nu} (|\nabla u|+|\nabla u^{\nu}_{\lambda}|)^{p-2} |\nabla(u^{\nu}_{\lambda}-u )^+|^2 \mathrm{d}x \nonumber \\
    & \quad + \frac{ C_0\overline{C}_2}{2C'(2C'_0)^{p-2}} \frac{1}{R^{\beta_1}}
    \left(\frac{2}{N+s-2}\right)^2\int_{\Sigma_\lambda^\nu} (|\nabla v|+|\nabla v^{\nu}_{\lambda}|)^{p-2} |\nabla(v^{\nu}_{\lambda}-v )^+|^2 \mathrm{d}x \nonumber
\end{align}
As for $v$, by \eqref{eq1004'} and similar argument, we can also get
\begin{align}\label{eq1010-}
        &\quad \int_{\Sigma_\lambda^\nu}(|\nabla v|+|\nabla v^{\nu}_{\lambda}|)^{p-2} |\nabla (v^{\nu}_{\lambda}-v)^+|^2 \mathrm{d}x \\
        &\leq \frac{C_0(2\overline{C}_1+\overline{C}_2)}{2C'(2C'_0)^{p-2}}\frac{1}{R^{\beta_1}}\left(\frac{2}{N+s-2}\right)^2
        \int_{\Sigma_\lambda^\nu} (|\nabla v|+|\nabla v^{\nu}_{\lambda}|)^{p-2} |\nabla(v^{\nu}_{\lambda}-v )^+|^2 \mathrm{d}x  \nonumber\\
        & + \frac{ C_0\overline{C}_2}{2C'(2C'_0)^{p-2}} \frac{1}{R^{\beta_1}}\left(\frac{2}{N+s-2}\right)^2
        \int_{\Sigma_\lambda^\nu}(|\nabla u|+|\nabla u^{\nu}_{\lambda}|)^{p-2} |\nabla (u^{\nu}_{\lambda}-u)^+|^2 \mathrm{d}x.  \nonumber
\end{align}
Adding up \eqref{eq1010} and \eqref{eq1010-}, we can now choose $R > R_0$ such that
$$\frac{C_0(\overline{C}_1+\overline{C}_2)}{C'(2C'_0)^{p-2}} \frac{1}{R^{\beta_1}}\left(\frac{2}{N+s-2}\right)^2 <1$$
and then take $R_1 = R_1(R)$.  Then \eqref{eq1010} and \eqref{eq1010-} yield that $(u^{\nu}_{\lambda}-u)^+=0$ and $(v^{\nu}_{\lambda}-v)^+=0$ a.e. in $\Sigma_\lambda^\nu$, which implies that $u\geq u^{\nu}_{\lambda}$ and $v \geq v^\nu_\lambda$ in $\Sigma_\lambda^\nu$ for any $\lambda>R_1$.

\medskip

\noindent {\bf Proof of $(ii)$.}  Now suppose that $\Sigma_\lambda^\nu\cap R_\lambda^\nu(B_{R_1}) \neq \emptyset$ and let $R>R_2$ to be determined later. For $\overline{R}\geq R$, we define $K:=B(P_\lambda^\nu,\overline{R})$, where $P_\lambda^\nu \in T_\lambda^\nu$ is the projection of the origin on $T_\lambda^\nu$. Observe that, for $R$ sufficiently large, in $\Sigma_\lambda^\nu\cap K^c \cap supp(u^{\nu}_{\lambda}-u)^+$, we have $u \leq u^{\nu}_{\lambda}$ and the sharp asymptotic estimates \eqref{eq0806} and \eqref{eq0806+} hold. Furthermore, $|x| \approx |x^{\nu}_{\lambda}|$. Thus, similar to the proof of {(i)} and the estimate of $II_2$, we get
\begin{align}\label{eq1201}
&\quad \int_{\Sigma_\lambda^\nu} (|\nabla u|+|\nabla u^{\nu}_{\lambda}|)^{p-2} |\nabla(u^{\nu}_{\lambda}-u)^+|^2 \mathrm{d}x \\
&\leq \frac{1}{C'}\int_{\Sigma^\nu_\lambda \setminus K} ((u^\nu_{\lambda})^\alpha(v^\nu_{\lambda})^\beta- u^\alpha v^\beta)(u^\nu_{\lambda}-u)^+ \mathrm{d}x + \frac{1}{C'}\int_{A^\nu_\lambda}((u^\nu_{\lambda})^\alpha(v^\nu_{\lambda})^\beta- u^\alpha v^\beta) (u^\nu_{\lambda}-u)^+\mathrm{d}x  \nonumber\\
&\leq \frac{C_0(2\overline{C}_1+\overline{C}_2)}{2C'(2C'_0)^{p-2}} \frac{1}{R^{\beta_1}}\left(\frac{2}{N+s-2}\right)^2 \int_{\Sigma_\lambda^\nu}(|\nabla u|+|\nabla u^{\nu}_{\lambda}|)^{p-2} |\nabla (u^{\nu}_{\lambda}-u)^+|^2 \mathrm{d}x \nonumber\\
& \quad +\frac{ C_0\overline{C}_2}{2C'(2C'_0)^{p-2}}\frac{1}{R^{\beta_1}}\left(\frac{2}{N+s-2}\right)^2\int_{\Sigma_\lambda^\nu} (|\nabla v|+|\nabla v^{\nu}_{\lambda}|)^{p-2} |\nabla(v^{\nu}_{\lambda}-v )^+|^2 \mathrm{d}x  \nonumber\\
&\quad+\frac{1}{C'}\int_{A^\nu_\lambda} ((u^\nu_{\lambda})^\alpha-u^\alpha) (v^\nu_{\lambda})^\beta (u^\nu_{\lambda}-u)^+ \mathrm{d}x+ \frac{1}{C'}\int_{A^\nu_\lambda} u^\alpha((v^\nu_{\lambda})^\beta -v^{\beta})(u^\nu_{\lambda}-u)^+ \mathrm{d}x \nonumber
\end{align}
where $A_\lambda^\nu:=K\cap\Sigma_\lambda^\nu$. Similar to the estimates \eqref{eq1005}-\eqref{eq1006} on $II_1$ in the proof of {(i)}, we can obtain the above estimates.

Since $(u^{\nu}_{\lambda}-u )^+$ vanishes on $T_\lambda^\nu$ and belongs to $W^{1,p}_{loc}(\R^N)$ and $(supp(u^{\nu}_{\lambda}-u)^+\cup supp(v^{\nu}_{\lambda}-v)^+) \cap\Sigma^{\nu}_{\lambda}\cap B(P_\lambda^\nu,\overline{R})=A \cup B$, by the Poincar\'{e} type inequality in Lemma \ref{PTI}, we have
\begin{align}\label{eq1203}
&\quad \frac{1}{C'}\int_{A^\nu_\lambda}((u^\nu_{\lambda})^\alpha-u^\alpha) (v^\nu_{\lambda})^\beta (u^\nu_{\lambda}-u)^+ \mathrm{d}x \\
&\leq \frac{\|v^\nu_\lambda\|^\beta_\infty }{C'} \int_{A_\lambda^\nu} \frac{(u^\nu_{\lambda})^\alpha-u^\alpha}{u^{\nu}_{\lambda}-u}[(u^{\nu}_{\lambda}-u )^+]^2 \mathrm{d}x \nonumber\\
&\leq \frac{\|v^\nu_\lambda\|^\beta_\infty\overline{C}_1}{C'} \int_{A_\lambda^\nu}[(u^{\nu}_{\lambda}-u )^+]^2 \mathrm{d}x \nonumber\\
&\leq \frac{\|v^\nu_\lambda\|^\beta_\infty \overline{C}_1}{C'} C |A_\lambda^\nu|^\frac{1}{N} \left( |A|^\frac{1}{N} \int_{A} |\nabla(u^{\nu}_{\lambda}-u )^+|^2+|B|^\frac{1}{N} \int_{B} |\nabla(u^{\nu}_{\lambda}-u )^+|^2 \mathrm{d}x \mathrm{d}x \right)\nonumber\\
&\leq \frac{\|v^\nu_\lambda\|^\beta_\infty \overline{C}_1}{C'}C |A_\lambda^\nu|^\frac{1}{N} \left(|A_\lambda^\nu|^\frac{1}{N} M_A^{2-p}+|B|^\frac{1}{N} M_K^{2-p} \right)  \nonumber\\
&\quad \times \int_{\Sigma_\lambda} (|\nabla u|+|\nabla u^{\nu}_{\lambda}|)^{p-2} |\nabla(u^{\nu}_{\lambda}-u )^+|^2 \mathrm{d}x, \nonumber
\end{align}
where $M_A:=\max\{ \sup\limits_A (|\nabla u|+|\nabla u^{\nu}_{\lambda}|), \sup\limits_A (|\nabla v|+|\nabla v^{\nu}_{\lambda}|)\}$. In the same way, we get
\begin{align}\label{eq1203-}
    &\quad \frac{1}{C'}\int_{A^\nu_\lambda}u^\alpha((v^\nu_{\lambda})^\beta -v^{\beta}) (u^\nu_{\lambda}-u)^+ \mathrm{d}x \\
    &\leq \frac{\|u^\nu_\lambda\|^\alpha_\infty\overline{C}_2}{C'} \int_{A_\lambda^\nu}(v^{\nu}_{\lambda}-v )^+(u^{\nu}_{\lambda}-u )^+ \mathrm{d}x \nonumber\\
    &\leq \frac{\|u^\nu_\lambda\|^\alpha_\infty \overline{C}_2}{2C'}C |A_\lambda^\nu|^\frac{1}{N} \left(|A_\lambda^\nu|^\frac{1}{N} M_A^{2-p}+ |B|^\frac{1}{N} M_K^{2-p} \right)  \nonumber\\
&\quad \times \left(\int_{\Sigma_\lambda} (|\nabla v|+|\nabla v^{\nu}_{\lambda}|)^{p-2} |\nabla(v^{\nu}_{\lambda}-v )^+|^2 +(|\nabla u|+|\nabla u^{\nu}_{\lambda}|)^{p-2} |\nabla(u^{\nu}_{\lambda}-u )^+|^2\right)\mathrm{d}x. \nonumber
\end{align}

Combining this inequality with \eqref{eq1201}--\eqref{eq1203-} and the fact that
$$\left(\Sigma^{\nu}_{\lambda}\cap (supp(u^{\nu}_{\lambda}-u)^+\cup supp(v^{\nu}_{\lambda}-v)^+)\right)\subset (\R^N \setminus B(P_\lambda^\nu,\overline{R}))\cup (A \cup B),$$
we get
\begin{align}\label{eq1204}
&\quad\int_{\Sigma^{\nu}_\lambda} (|\nabla u|+|\nabla u^{\nu}_{\lambda}|)^{p-2} |\nabla(u^{\nu}_{\lambda}-u)^+|^2 \mathrm{d}x \\
&\leq \frac{C_0(2\overline{C}_1+\overline{C}_2)}{2C'(2C'_0)^{p-2}} \frac{1}{R^{\beta_1}}\left(\frac{2}{N+s-2}\right)^2 \int_{\Sigma_\lambda^\nu}(|\nabla u|+|\nabla u^{\nu}_{\lambda}|)^{p-2} |\nabla (u^{\nu}_{\lambda}-u)^+|^2 \mathrm{d}x \nonumber\\
& \quad +\frac{ C_0\overline{C}_2}{2C'(2C'_0)^{p-2}}\frac{1}{R^{\beta_1}}\left(\frac{2}{N+s-2}\right)^2\int_{\Sigma_\lambda^\nu} (|\nabla v|+|\nabla v^{\nu}_{\lambda}|)^{p-2} |\nabla(v^{\nu}_{\lambda}-v )^+|^2 \mathrm{d}x  \nonumber\\
&\quad +\frac{\overline{C}_1\|v^\nu_\lambda\|^\beta_\infty }{C'}C |A_\lambda^\nu|^\frac{1}{N} \left(  |A_\lambda^\nu|^\frac{1}{N} M_A^{2-p}+|B|^\frac{1}{N} M_K^{2-p}\right)  \nonumber\\
&\quad \quad \times \int_{\Sigma_\lambda} (|\nabla u|+|\nabla u^{\nu}_{\lambda}|)^{p-2} |\nabla(u^{\nu}_{\lambda}-u )^+|^2 \mathrm{d}x \nonumber\\
 &\quad +\frac{\overline{C}_2\|u^\nu_\lambda\|^\alpha_\infty }{2C'}C |A_\lambda^\nu|^\frac{1}{N} \left(|A_\lambda^\nu|^\frac{1}{N} M_A^{2-p}+ |B|^\frac{1}{N} M_K^{2-p} \right)  \nonumber\\
&\quad \quad \times \left(\int_{\Sigma_\lambda} (|\nabla v|+|\nabla v^{\nu}_{\lambda}|)^{p-2} |\nabla(v^{\nu}_{\lambda}-v )^+|^2 +(|\nabla u|+|\nabla u^{\nu}_{\lambda}|)^{p-2} |\nabla(u^{\nu}_{\lambda}-u )^+|^2\right)\mathrm{d}x. \nonumber
\end{align}

Similarly, we get
\begin{align}\label{eq1204-}
&\quad\int_{\Sigma^{\nu}_\lambda} (|\nabla v|+|\nabla v^{\nu}_{\lambda}|)^{p-2} |\nabla(v^{\nu}_{\lambda}-v)^+|^2 \mathrm{d}x \\
&\leq \frac{C_0(2\overline{C}_1+\overline{C}_2)}{2C'(2C'_0)^{p-2}} \frac{1}{R^{\beta_1}}\left(\frac{2}{N+s-2}\right)^2 \int_{\Sigma_\lambda^\nu}(|\nabla v|+|\nabla v^{\nu}_{\lambda}|)^{p-2} |\nabla (v^{\nu}_{\lambda}-v)^+|^2 \mathrm{d}x \nonumber\\
& \quad +\frac{ C_0\overline{C}_2}{2C'(2C'_0)^{p-2}}\frac{1}{R^{\beta_1}}\left(\frac{2}{N+s-2}\right)^2\int_{\Sigma_\lambda^\nu} (|\nabla u|+|\nabla u^{\nu}_{\lambda}|)^{p-2} |\nabla(u^{\nu}_{\lambda}-u )^+|^2 \mathrm{d}x  \nonumber\\
&\quad + \frac{\overline{C}_1\|u^\nu_\lambda\|^\beta_\infty }{C'}C |A_\lambda^\nu|^\frac{1}{N} \left( |A_\lambda^\nu|^\frac{1}{N} M_A^{2-p}+ |B|^\frac{1}{N} M_K^{2-p} \right)  \nonumber\\
&\quad \quad\times \int_{\Sigma_\lambda} (|\nabla v|+|\nabla v^{\nu}_{\lambda}|)^{p-2} |\nabla(v^{\nu}_{\lambda}-v )^+|^2 \mathrm{d}x \nonumber\\
 &\quad +\frac{\overline{C}_2\|v^\nu_\lambda\|^\alpha_\infty }{2C'}C |A_\lambda^\nu|^\frac{1}{N} \left(  |A_\lambda^\nu|^\frac{1}{N} M_A^{2-p}+|B|^\frac{1}{N} M_K^{2-p} \right)  \nonumber\\
&\quad \quad \times \left(\int_{\Sigma_\lambda} (|\nabla u|+|\nabla u^{\nu}_{\lambda}|)^{p-2} |\nabla(u^{\nu}_{\lambda}-u )^+|^2 +(|\nabla v|+|\nabla v^{\nu}_{\lambda}|)^{p-2} |\nabla(v^{\nu}_{\lambda}-v )^+|^2\right)\mathrm{d}x. \nonumber
\end{align}

Then we add \eqref{eq1204} and \eqref{eq1204-} together
\begin{align}\label{sum}
& \quad\int_{\Sigma^{\nu}_\lambda} (|\nabla u|+|\nabla u^{\nu}_{\lambda}|)^{p-2} |\nabla(u^{\nu}_{\lambda}-u)^+|^2 \mathrm{d}x +\int_{\Sigma^{\nu}_\lambda} (|\nabla v|+|\nabla v^{\nu}_{\lambda}|)^{p-2} |\nabla(v^{\nu}_{\lambda}-v)^+|^2 \mathrm{d}x \\
&\leq \frac{C_0(\overline{C}_1+\overline{C}_2)}{C'(2C'_0)^{p-2}} \frac{1}{R^{\beta_1}}\left(\frac{2}{N+s-2}\right)^2 \int_{\Sigma_\lambda^\nu}(|\nabla u|+|\nabla u^{\nu}_{\lambda}|)^{p-2} |\nabla (u^{\nu}_{\lambda}-u)^+|^2 \mathrm{d}x \nonumber\\
& \quad +\frac{C_0(\overline{C}_1+\overline{C}_2)}{C'(2C'_0)^{p-2}}\frac{1}{R^{\beta_1}}\left(\frac{2}{N+s-2}\right)^2\int_{\Sigma_\lambda^\nu} (|\nabla v|+|\nabla v^{\nu}_{\lambda}|)^{p-2} |\nabla(v^{\nu}_{\lambda}-v )^+|^2 \mathrm{d}x  \nonumber\\
&\quad +\frac{2\overline{C}_1\|v^\nu_\lambda\|^\beta_\infty + \overline{C}_2(\|v^\nu_\lambda\|^\alpha_\infty+\|u^\nu_\lambda\|^\alpha_\infty) }{C'}C |A_\lambda^\nu|^\frac{1}{N} \left(  |A_\lambda^\nu|^\frac{1}{N} M_A^{2-p}+|B|^\frac{1}{N} M_K^{2-p} \right)  \nonumber\\
&\quad \quad \times \int_{\Sigma_\lambda} (|\nabla u|+|\nabla u^{\nu}_{\lambda}|)^{p-2} |\nabla(u^{\nu}_{\lambda}-u )^+|^2 \mathrm{d}x \nonumber\\
&\quad +\frac{2\overline{C}_1\|u^\nu_\lambda\|^\beta_\infty + \overline{C}_2(\|u^\nu_\lambda\|^\alpha_\infty+\|v^\nu_\lambda\|^\alpha_\infty )}{C'}C|A_\lambda^\nu|^\frac{1}{N} \left(  |A_\lambda^\nu|^\frac{1}{N} M_A^{2-p}+|B|^\frac{1}{N} M_K^{2-p} \right)  \nonumber\\
&\quad \quad\times \int_{\Sigma_\lambda} (|\nabla v|+|\nabla v^{\nu}_{\lambda}|)^{p-2} |\nabla(v^{\nu}_{\lambda}-v )^+|^2 \mathrm{d}x. \nonumber
\end{align}
Let
$$\hat{G}(R):=\frac{ C_0(\overline{C}_1+\overline{C}_2)}{C'(2C'_0)^{p-2}} \frac{1}{R^{\beta_1}}\left(\frac{2}{N+s-2}\right)^2.$$
It is easy to see that $\hat{G}(R)\to 0$ as $R\to\infty$ for $i=1,2.$
Now, we can choose $R$ so that $\hat{G}(\overline{R})\leq \hat{G}(R) <\frac{1}{4}$ for each $\overline{R}\geq R$. Let
$$K_1:=2\overline{C}_1\|v^\nu_\lambda\|^\beta_\infty + \overline{C}_2(\|v^\nu_\lambda\|^\alpha_\infty+\|u^\nu_\lambda\|^\alpha_\infty) ,$$
$$K_2:=2\overline{C}_1\|u^\nu_\lambda\|^\beta_\infty + \overline{C}_2(\|u^\nu_\lambda\|^\alpha_\infty+\|v^\nu_\lambda\|^\alpha_\infty ).$$
Then it is enough to choose
$$ \delta:= \frac{1}{|A_\lambda^\nu|}\left(\frac{C'}{4CM_K^{2-p}} \right)^N\min\left\{ \frac{1}{K_1^N}, \frac{1}{K_2^N} \right\}$$
and
$$ M^{2-p}:=\frac{C'}{4C|A_\lambda^\nu|^\frac{2}{N} }\min \left\{\frac{1}{K_1}, \frac{1}{K_2} \right\}.$$
With such choices of $\delta$ and $M$, it is easy to see that, whenever
$$ (supp(u^{\nu}_{\lambda}-u)^+\cup supp(v^{\nu}_{\lambda}-v)^+)\cap K=A\cup B,$$
$$|A\cap B|=0,\,|A\cup B|<\delta,\,M_A<M,$$
\eqref{sum} implies that $(u^{\nu}_{\lambda}-u)^+=0$ and $(v^{\nu}_{\lambda}-v)^+=0$ in $\Sigma_\lambda^\nu$, i.e., $u^{\nu}_{\lambda}\leq u$ and $v^{\nu}_{\lambda}\leq v$ in $\Sigma_\lambda^\nu$. This concludes our proof of Lemma \ref{lm.4.1}.
\end{proof}

\begin{rem}\label{re.4.1}
Observe that, for given $\lambda_0$ and $\nu_0$, $\overline{R}$ can be chosen so large such that, for all $\lambda$ in a small neighbourhood of $\lambda_0$, say, $[ \lambda_0-\theta_0 ,\lambda_0 + \theta_0 ]$ with $\theta_{0}>0$ small, and for all $\nu$ in $\overline{I_{\theta_0}(\nu_0)}:=\{ \nu \mid |\nu|=1,|\nu-\nu_0|\leq \theta_0 \}$, we have
$$ \overline{R}>R=R(\lambda,\nu)\,\quad \mbox{and}\,\quad B(P_\lambda^\nu,R)\subset B(P_{\lambda_0}^{\nu_0},\overline{R}), $$
where $R=R(\lambda,\nu)$ is given by Lemma \ref{lm.4.1} $(ii)$. Furthermore, $\delta$ and $M$ can also be chosen uniformly for all these $\lambda$ and $\nu$ in a $\theta_0$--neighbourhood of $(\lambda_0,\nu_0)$, by replacing $|A_\lambda^\nu|$ with $|B(P_{\lambda_0}^{\nu_0},\overline{R})|$ on the R.H.S of \eqref{eq1204}.
\end{rem}

\begin{lem}\label{lm.4.2}
Assume $(u,v)$ is a pair of positive weak solutions of \eqref{eq1.1} for $1<p<2$. Let $\nu_0$ be any unit vector in $\R^N$ and $\lambda_0\in\overline{\Lambda}(\nu_0)$. Let $\overline{R}$, $\delta$ and $M$ be given by Remark \ref{re.4.1} corresponding to $\theta_0-$neighbourhood of $(\lambda_0,\nu_0)$.

If we have
\begin{equation}\label{assump}
  u>u_{\lambda_0}^{\nu_0},\quad\,\,\mbox{in} \,\,\,(\Sigma_{\lambda_0}^{\nu_0}\setminus Z_{\lambda_0}^{\nu_0}(u))  \cap  B(P_{\lambda_0}^{\nu_0},\overline{R}),
\end{equation}
\begin{equation}\label{assump-}
  v>v_{\lambda_0}^{\nu_0},\quad\,\,\mbox{in} \,\,\,(\Sigma_{\lambda_0}^{\nu_0}\setminus Z_{\lambda_0}^{\nu_0}(v))  \cap  B(P_{\lambda_0}^{\nu_0},\overline{R}),
\end{equation}
then there exists $0<\theta_1 < \theta_0$ such that
%{\bf $(i)$} $u\geq u_\lambda^{\nu_0}$ in $\Sigma_\lambda^{\nu_0} \,\,\forall\,\lambda\in(\lambda_0-\theta_1,\lambda_0+\theta_1)$,\\
%{\bf $(ii)$}
$$ u\geq u^{\nu}_{\lambda},\,\,v\geq v^{\nu}_{\lambda} \quad \mbox{in}\,\, \Sigma_\lambda^\nu,\,\,\,\,\quad \forall\,\,(\lambda,\nu)\in (\lambda_0-\theta_1,\lambda_0+\theta_1)\times I_{\theta_1}(\nu_0),$$
where $I_{\theta_1}(\nu_0):=\{ \nu \,|\, |\nu|=1, |\nu-\nu_0|<\theta_1 \}$.
\end{lem}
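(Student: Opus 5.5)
The plan is to apply the weak comparison principle of Lemma \ref{lm.4.1} $(ii)$, with $\overline{R}$, $\delta$, $M$ chosen uniformly as in Remark \ref{re.4.1}. For $(\lambda,\nu)$ close to $(\lambda_0,\nu_0)$ we decompose
$$(supp(u^{\nu}_{\lambda}-u)^+\cup supp(v^{\nu}_{\lambda}-v)^+)\cap\Sigma^{\nu}_{\lambda}\cap B(P_{\lambda_0}^{\nu_0},\overline{R})=A\cup B,$$
with $M_A<M$ and $|A|+|B|<\delta$. Lemma \ref{lm.4.1} $(ii)$ then gives the conclusion directly.

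\textbf{Choice of $A$.} Since $|Z_u|=|Z_v|=0$ (Remark \ref{re2334}) and $Z_u,Z_v$ are closed, we first fix a small open neighbourhood $\mathcal{U}$ of $(Z^{\nu_0}_{\lambda_0}(u)\cup Z^{\nu_0}_{\lambda_0}(v))\cap\overline{B(P_{\lambda_0}^{\nu_0},\overline{R})}$. On $\mathcal{U}$ we require
$$|\nabla u|+|\nabla u^{\nu_0}_{\lambda_0}|<M/2 \quad\text{and}\quad |\nabla v|+|\nabla v^{\nu_0}_{\lambda_0}|<M/2,$$
which is possible by continuity of the gradients (Theorem \ref{th2.1-}). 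We also require $|\mathcal{U}|<\delta/3$. By the uniform continuity of $\nabla u,\nabla v$ on compacts, for $(\lambda,\nu)$ close enough to $(\lambda_0,\nu_0)$ the reflected gradients $\nabla u^\nu_\lambda,\nabla v^\nu_\lambda$ are uniformly close to $\nabla u^{\nu_0}_{\lambda_0},\nabla v^{\nu_0}_{\lambda_0}$. Hence $M_{\mathcal{U}}<M$ at $(\lambda,\nu)$ as well, and we take $A:=\mathcal{U}\cap\Sigma^\nu_\lambda\cap(\text{supp})$.

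\textbf{Choice of $B$.} Next pick a thin strip $S$ around $T^{\nu_0}_{\lambda_0}$ inside $B(P_{\lambda_0}^{\nu_0},\overline{R})$ with $|S|<\delta/3$. The set
$$K:=\overline{(\Sigma^{\nu_0}_{\lambda_0}\cap B(P_{\lambda_0}^{\nu_0},\overline{R}))\setminus(\mathcal{U}\cup S)}$$
is compact and does not meet the critical sets, so by \eqref{assump}--\eqref{assump-} we have $u-u^{\nu_0}_{\lambda_0}\ge\eta>0$ and $v-v^{\nu_0}_{\lambda_0}\ge\eta>0$ on $K$. By continuity in $(\lambda,\nu)$, and since $(x,\lambda,\nu)\mapsto u(R^\nu_\lambda x)$ is uniformly continuous on compacts, there is $\theta_1<\theta_0$ such that $u>u^\nu_\lambda$ and $v>v^\nu_\lambda$ on $K$ for all $|\lambda-\lambda_0|<\theta_1$, $|\nu-\nu_0|<\theta_1$. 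Shrinking $\theta_1$ further, the symmetric difference between $\Sigma^\nu_\lambda\cap B(P_{\lambda_0}^{\nu_0},\overline{R})$ and $\Sigma^{\nu_0}_{\lambda_0}\cap B(P_{\lambda_0}^{\nu_0},\overline{R})$ lies in $S$. So the support inside the ball is contained in $(\mathcal{U}\cup S)\cap\Sigma^\nu_\lambda$, and we set $B:=(S\setminus\mathcal{U})\cap(\text{supp})$. Then $|A|+|B|<\delta$, $|A\cap B|=0$ and $M_A<M$. Lemma \ref{lm.4.1} $(ii)$ (with the uniform constants of Remark \ref{re.4.1}) yields $u\ge u^\nu_\lambda$ and $v\ge v^\nu_\lambda$ in $\Sigma^\nu_\lambda$.

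\textbf{Main obstacle.} The delicate step is the choice of $\mathcal{U}$ with the gradient bound holding uniformly in $(\lambda,\nu)$. The threshold $M$ depends on $\overline{R}$ only, not on $\mathcal{U}$, so the order of choices is essential: first $M,\delta$; then $\mathcal{U}$ and $S$; then $\eta$; finally $\theta_1$. One must also check that no part of the support escapes to $\Sigma^\nu_\lambda\setminus\Sigma^{\nu_0}_{\lambda_0}$ outside $S$. This follows from the tilt of the hyperplane being small within the bounded ball.
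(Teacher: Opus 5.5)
Your route is the paper's: a small neighbourhood of the critical sets, a small neighbourhood of the boundary of $\Sigma^{\nu_0}_{\lambda_0}\cap B(P^{\nu_0}_{\lambda_0},\overline{R})$, a positive gap on the compact remainder, continuity in $(\lambda,\nu)$, and then Lemma~\ref{lm.4.1}$(ii)$ with the uniform constants of Remark~\ref{re.4.1}. Two steps need repair.

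\textbf{The sphere.} Your strip $S$ only removes a neighbourhood of $T^{\nu_0}_{\lambda_0}$. Your compact set $K$ is the closure of a subset of the open ball, so it contains points of the sphere $\partial B(P^{\nu_0}_{\lambda_0},\overline{R})$. The hypotheses \eqref{assump}--\eqref{assump-} are assumed only in the open ball, so they give no strict inequality on the sphere, and the bound $\eta>0$ on $K$ is unjustified. The paper avoids this by taking its small-measure set to be a neighbourhood of the whole boundary $\partial\bigl(\Sigma^{\nu_0}_{\lambda_0}\cap B(P^{\nu_0}_{\lambda_0},\overline{R})\bigr)$. Add a thin shell near the sphere to $S$ and your argument goes through. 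Relatedly, Lemma~\ref{lm.4.1}$(ii)$ is stated for balls centred at $P^\nu_\lambda\in T^\nu_\lambda$, because the Poincar\'e inequality of Lemma~\ref{PTI} needs the centre on the hyperplane. So apply it on $B(P^\nu_\lambda,R(\lambda,\nu))\subset B(P^{\nu_0}_{\lambda_0},\overline{R})$, as the paper does; your control of the supports on the larger ball transfers to this smaller one.

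\textbf{The gradient bound on $\mathcal U$.} You claim one can choose $\mathcal{U}$ with both $|\nabla u|+|\nabla u^{\nu_0}_{\lambda_0}|<M/2$ and $|\nabla v|+|\nabla v^{\nu_0}_{\lambda_0}|<M/2$ ``by continuity''. This is false in general. On $Z^{\nu_0}_{\lambda_0}(u)$ only the $u$-sum vanishes, and nothing prevents $|\nabla v|+|\nabla v^{\nu_0}_{\lambda_0}|$ from being positive there. Meanwhile $M$ is a fixed constant determined by $\overline{R}$, and it is small when $\overline{R}$ is large. The paper's proof makes the identical claim about $M_O^{\lambda_0,\nu_0}$, so you inherited this rather than introduced it, but it is still not valid.

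Fortunately it is not needed. Lemma~\ref{lm.4.1}$(ii)$ as stated already requires $|A|+|B|<\delta$. Since $|Z_u|=|Z_v|=0$ (Remark~\ref{re2334}), you can take $|\mathcal{U}|<\delta/3$. Then put the whole support inside the ball into $B$ and take $A=\emptyset$, which makes the condition $M_A<M$ vacuous. The Poincar\'e term coming from $B$ is then controlled by $|B|<\delta$ and $M_K$ alone, exactly as in the proof of Lemma~\ref{lm.4.1}$(ii)$. A small-gradient set of possibly large measure is only indispensable when the critical set is not known to be negligible.

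If you want to keep that mechanism anyway, use separate neighbourhoods of $Z^{\nu_0}_{\lambda_0}(u)$ and $Z^{\nu_0}_{\lambda_0}(v)$, and separate decompositions of the supports of $(u^\nu_\lambda-u)^+$ and $(v^\nu_\lambda-v)^+$. This works because the proof of Lemma~\ref{lm.4.1}$(ii)$ uses only the $u$-gradients on the first support and only the $v$-gradients on the second. Either way, the step you flag as the main obstacle disappears.
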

\begin{proof}
Let $K:=B(P_{\lambda_0}^{\nu_0},\overline{R})$, where $P_{\lambda_0}^{\nu_0}$ is the projection of the origin on $T_{\lambda_0}^{\nu_0}$, and let $\delta$ and $M$ be chosen as in Remark \ref{re.4.1} uniformly for all $(\lambda,\nu)$ in a $\theta_0$-neighbourhood of $(\lambda_0,\nu_0)$.

Since $\overline{R}\geq R>R_{2}>2R_{1}>2R_{0}$, $\Sigma_\lambda^\nu \cap R_\lambda^\nu(B_{R_1}) \subset B(P_\lambda^\nu,R_2) \subset B(P_\lambda^\nu,R)\subset B(P_{\lambda_0}^{\nu_0},\overline{R})$ and $\Sigma_\lambda^\nu \cap R_\lambda^\nu(B_{R_1}) \neq \emptyset$ imply $B_{R_0}\subset B(P_{\lambda_0}^{\nu_{0}},\overline{R})$, by the sharp asymptotic estimates in Theorem \ref{th2.1-}, we get
$$Z_u\cup Z_v\subset K.$$
On the other hand, by Remark \ref{re2334}, the Lebesgue measures of $Z_{u}$ and $Z_{v}$ are zero, i.e., $|Z_u|=|Z_v|=0$.  Thus we can choose a small neighborhood $B$ of $\partial\left(K\cap \Sigma_{\lambda_0}^{\nu_0}\right)$ with $|B|<\frac{\delta}{4}$ and another neighbourhood $O$ of $(Z_{\lambda_0}^{\nu_0}(u)\cup Z_{\lambda_0}^{\nu_0}(v)) \cap K$ with $|O|<\frac{\delta}{4}$ and $M_O^{\lambda_0,\nu_0}<\frac{M}{2}$, where
$$ M_O^{\lambda,\nu}=\max\{\sup_O (|\nabla u|+|\nabla u^{\nu}_{\lambda}|), \sup_O (|\nabla v|+|\nabla v^{\nu}_{\lambda}|)\} .$$

Note that $K_0=(\overline{K\cap\Sigma_{\lambda_0}^{\nu_0}}) \setminus (B\cup O)$ is a compact set, so by our assumption \eqref{assump}, one has, there exists some positive constant $m>0$ such that
$$ u-u_{\lambda_0}^{\nu_0}>m>0,\  v-v_{\lambda_0}^{\nu_0}>m>0 \quad\quad \,\mbox{in}\,\,\, K_0. $$
Then the continuity with respect to $\lambda$ and $\nu$ implies that, there exists $\theta_1=\theta_1(\lambda_0,\nu_0,K)<\theta_0$, such that
\begin{align*}
& u-u^{\nu}_{\lambda}> \frac{m}{2}>0,\ v-v^{\nu}_{\lambda}> \frac{m}{2}>0 \qquad \,\mbox{in}\,K_0, \\
& M_{O\cap \Sigma_\lambda^\nu}^{\lambda,\nu}< M, \\
& |B\cap B(P_\lambda^\nu,R) \cap \Sigma_\lambda^\nu |< \frac{\delta}{2},\\
& |O\cap B(P_\lambda^\nu,R) \cap \Sigma_\lambda^\nu |< \frac{\delta}{2},\\
& (B(P_\lambda^\nu,R)\cap \Sigma_\lambda^\nu)\subset (B(P_{\lambda_0}^{\nu_0},\overline{R})\cap \Sigma_{\lambda_0}^{\nu_0}) \cup B
\end{align*}
for all $(\lambda,\nu)\in (\lambda_0-\theta_1, \lambda_0+\theta_1)\times I_{\theta_1}(\nu_0)$, where $R=R(\lambda_0,\nu_0)$ is given by Lemma \ref{lm.4.1} $(ii)$. Then we have
\begin{align*}
     & \quad(supp(u^{\nu}_{\lambda}-u)^+\cup supp(v^{\nu}_{\lambda}-v)^+) \cap B(P_\lambda^\nu,R)\\
     &\subset (B\cup O) \cap B(P_\lambda^\nu,R) \\
     &= (B\cap B(P_\lambda^\nu,R)) \cup (O\cap B(P_\lambda^\nu,R))
\end{align*}
for all $(\lambda,\nu)\in (\lambda_0-\theta_1, \lambda_0+\theta_1)\times I_{\theta_1}(\nu_0)$, which implies that
\begin{align*}
    & \quad(supp(u^{\nu}_{\lambda}-u)^+\cup supp(v^{\nu}_{\lambda}-v)^+) \cap B(P_\lambda^\nu,R)\cap \Sigma_\lambda^\nu\\
    &\subset(B\cap B(P_\lambda^\nu,R)\cap \Sigma_\lambda^\nu) \cup (O\cap B(P_\lambda^\nu,R)\cap \Sigma_\lambda^\nu)
\end{align*}
satisfies the conditions of Lemma \ref{lm.4.1} $(ii)$. Therefore, we derive
$$ u\geq u^{\nu}_{\lambda},\ v\geq v^{\nu}_{\lambda} \quad \,\,\,\,\,\mbox{in}\,\, \Sigma_\lambda^\nu$$
for all $(\lambda,\nu)\in (\lambda_0-\theta_1,\lambda_0+\theta_1)\times I_{\theta_1}(\nu_0)$. This completes our proof of Lemma \ref{lm.4.2}.
\end{proof}

\begin{rem}\label{re.4.2}
Under the same assumptions as Lemma \ref{lm.4.2}, if, for some $\lambda_0 \in \overline{\Lambda}(\nu_0)$, there exist open neighbourhoods $B$ and $O$ of the sets $\partial\left(\Sigma^{\nu_0}_{\lambda_0} \cap B(P^{\nu_0}_{\lambda_0},\overline{R})\right)$ and $\left(Z_{\lambda_0}^{\nu_0}(u) \cup Z_{\lambda_0}^{\nu_0}(v)\right)\cap B(P^{\nu_0}_{\lambda_0},\overline{R})$ respectively such that
$$ u>u^{\nu_0}_{\lambda_0},\,\,v>v_{\lambda_0}^{\nu_0}\quad\, \mbox{in}\,\,\overline{\left(B(P^{\nu_0}_{\lambda_0},\overline{R})\cap \Sigma^{\nu_0}_{\lambda_0}\right)}\setminus (B\cup O) $$
with $|B|+|O|<\frac{\delta}{2}$ and $M_O^{\nu_0,\lambda_0}<\frac{M}{2}$, where $\delta$ and $M$ are chosen as in Remark \ref{re.4.1} uniformly for all  $(\lambda,\nu)\in (\lambda_0-\theta_0, \lambda_0+\theta_0)\times I_{\theta_0}(\nu_0)$. Then there exists a sufficiently small $\theta_1>0$ such that Lemma \ref{lm.4.2} holds.
\end{rem}

\subsubsection{Proof of the radial symmetry and strictly radial monotonicity}

In this subsection, we are to complete the proof of Theorem \ref{th2} in the singular elliptic case $1<p<2$.

\begin{proof}
Along any direction $\nu$ in $\R^N$, we start moving the planes $T^{\nu}_{\lambda}$ from $\lambda$ near $+\infty$ until its limiting position $T^{\nu}_{\lambda_{0}(\nu)}$.

\medskip

{\bf Step 1.} We will show that $\overline{\Lambda}(\nu)\neq\emptyset$ and $\lambda_0(\nu)$ is well-defined and finite.

We claim that $\overline{\Lambda}(\nu) \supset (R_1,+\infty)$, where $R_1=R_1(N,p,R_0)>0$ is given by Lemma \ref{lm.4.1} $(i)$. If $\lambda > R_1$,
$$ \Sigma_\lambda^\nu \cap R_\lambda^\nu(B_{R_1})=\emptyset.$$
Hence using Lemma \ref{lm.4.1} $(i)$, we have
$$ u\geq u_\lambda^\nu\,\,\mbox{and}\,\,v\geq v^\nu_\lambda \,\quad\,\,\mbox{in}\,\, \Sigma_\lambda^\nu $$
for any $\lambda\in\overline{\Lambda}(\nu)$ and $\lambda > R_1$.

\medskip

{\bf Step 2.} We are to prove $u \equiv u^\nu_{\lambda_0(\nu)}$ and $v \equiv v^\nu_{\lambda_0(\nu)}$ in $\Sigma^\nu_{\lambda_0(\nu)}$.

At the limiting position $T^{\nu}_{\lambda_0(\nu)}$, by continuity, we have $u\geq u^\nu_{\lambda_0(\nu)}$ in $\Sigma^\nu_{\lambda_0(\nu)}$. Next, by the strong comparison principle in Lemma \ref{th3.2} and the fact that
\begin{align}\label{eq2305}
-\Delta_p u =u^\alpha v^\beta \geq (u^\nu_\lambda)^\alpha (v^\nu_\lambda)^\beta= -\Delta_p u^{\nu}_{\lambda_0(\nu)} \quad\,\,\mbox{in}\,\,\Sigma^{\nu}_{\lambda_0(\nu)},
\end{align}
we deduce that, either $u>u^\nu_{\lambda_0(\nu)}$  or $u=u^\nu_{\lambda_0(\nu)}$ in all $\mathcal{C}^\nu$, where $\mathcal{C}^\nu$ is a connected component of $\Sigma^\nu_{\lambda_0(\nu)} \setminus Z^\nu_{\lambda_0(\nu)}(u)$.

Noting that the sharp lower bound of $|\nabla u|$ in Theorems \ref{th2.1-} indicates that the critical set $Z_u=\{x \in \R^N \mid |\nabla u(x)| = 0 \}$ of the solution $u$ is a closed set belonging to $B_{R_0}(0)$, i.e.,
$$Z^\nu_{\lambda_0(\nu)}(u)\subset Z_u \subset B_{R_0}.$$
Meanwhile, it follows from Corollary \ref{re2333} that $|Z_{u}|=0$ (see Remark \ref{re2334}), and from Lemma \ref{lm.7} that $\Omega\setminus Z_u$ is connected for any smooth bounded domain $\Omega\subset\mathbb{R}^{N}$ with connected boundary such that $B_{R_0}(0)\subseteq\Omega$. As a consequence, one has $\Sigma^\nu_{\lambda_0(\nu)}\setminus Z_u$ is connected. Due to the closeness of $Z_{u}$ and the fact that $|Z_{u}|=0$, we can deduce that $\Sigma^\nu_{\lambda_0(\nu)}\setminus Z^\nu_{\lambda_{0}(\nu)}(u)$ is connected.

\smallskip

If $u\equiv u^\nu_{\lambda_0}$ in $\Sigma^\nu_{\lambda_0(\nu)}\setminus Z^\nu_{\lambda_{0}(\nu)}(u)$, then Step 2 is finished. Otherwise, we assume on the contrary that
\begin{align}\label{eq2305009}
u > u^\nu_{\lambda_0(\nu)} \quad\,\,\,\mbox{in}\,\,\Sigma^\nu_{\lambda_0(\nu)} \setminus Z^\nu_{\lambda_0(\nu)}(u),
\end{align}
%If there exists a point $x_0\in \Sigma^\nu_{\lambda_0(\nu)}$ such that $u(x_0)> u^\nu_{\lambda_0} (x_0)$. Now we will prove that {\bf $(ii)$} occurs.
%Suppose by contradiction that there does not exist any component $C^\nu$ of $\Sigma^\nu_{\lambda_0(\nu)} \setminus Z^\nu_{\lambda_0(\nu)}$ such that $C^\nu \cap B(P^\nu_{\lambda_0(\nu)},\overline{R})\neq\emptyset$ and $u\equiv u^\nu_{\lambda_0(\nu)}$ in $C^\nu$. Using Theorem \ref{th3.2}, we have $u>u^\nu_{\lambda_0}$ on $(B(P^\nu_{\lambda_0(\nu)},\overline{R}) \cap \Sigma^\nu_{\lambda_0(\nu)})\setminus Z^\nu_{\lambda_0(\nu)}$.
then Lemma \ref{lm.4.2} implies that $u\geq u_\lambda^\nu$ in $\Sigma_\lambda^\nu$ for any $\lambda\in(\lambda_0(\nu)-\theta_1, \lambda_0(\nu))$ with $\theta_{1}>0$ small, which contradicts the definition of $\lambda_0(\nu)$. As a consequence, we have proved that
$$ u\equiv u^\nu_{\lambda_0(\nu)} \,\,\quad \mbox{in}\,\, \Sigma^\nu_{\lambda_0(\nu)} \setminus Z^\nu_{\lambda_0(\nu)}(u).$$
Furthermore, by the definition of $Z^\nu_{\lambda_0(\nu)}(u)$ and $u\in C_{loc}^{1,\alpha}(\R^N)$, it follows that
\begin{align}\label{eq23050091}
u\equiv u^\nu_{\lambda_0(\nu)} \quad\,\,\mbox{in} \,\,\Sigma^\nu_{\lambda_0(\nu)}.
\end{align}

On the other hand, according to the strong comparison principle Lemma \ref{th3.2}, one knows that
$ u > u^\nu_{\lambda}$ in $\Sigma^\nu_\lambda\setminus Z^\nu_{\lambda(\nu)}(u)$ for any $\lambda > \lambda_0(\nu)$.    %consequently $u$ is monotone increasing in $\Sigma^\nu_{\lambda_0(\nu)} \setminus Z^\nu_{\lambda_0(\nu)}$ so that $u_{x_1} \geq 0$ in $\Sigma^\nu_{\lambda_0(\nu)} \setminus Z^\nu_{\lambda_0(\nu)}$.
Consequently, $u$ is strictly monotone increasing in $\Sigma^\nu_{\lambda_0(\nu)}\setminus Z^\nu_{\lambda_0(\nu)}(u)$ for any direction $\nu$, and hence $\frac{\partial u}{\partial \nu} \geq 0$ in $\Sigma^\nu_{\lambda_0(\nu)} \setminus Z^\nu_{\lambda_0(\nu)}(u)$. With the same argument, we have the same conclusion about $v$.
%
%Furthermore $ u > u^\nu_{\lambda}$ in $\Sigma^\nu_\lambda\setminus Z^\nu_{\lambda(\nu)}$ for any $\lambda > \lambda_0(\nu)$ and  %consequently $u$ is monotone increasing in $\Sigma^\nu_{\lambda_0(\nu)} \setminus Z^\nu_{\lambda_0(\nu)}$ so that $u_{x_1} \geq 0$ in $\Sigma^\nu_{\lambda_0(\nu)} \setminus Z^\nu_{\lambda_0(\nu)}$.
%consequently $u$ in $\Sigma^\nu_{\lambda_0(\nu)}\setminus Z^\nu_{\lambda_0(\nu)}$ is monotonically increasing in the $\nu$ direction, i.e. $\frac{\partial u}{\partial \nu} \geq 0$ in $\Sigma^\nu_{\lambda_0(\nu)} \setminus Z^\nu_{\lambda_0(\nu)}$.

\medskip

{\bf Step 3.} We will show $u$ and $v$ are radial and (strictly) radially decreasing about some point $x_0\in\R^N$.

It is known that for any direction $\nu$, $u, v$ are symmetric about the plane $T^\nu_{\lambda_0(\nu)}$. Now consider all the $N$ orthogonal directions $\{e_k\}_{k=1}^N$ in $\R^N$, and one can obtain in entirely similar way that $u, v$ are strictly increasing in each direction $e_k$ in $\Sigma^{e_k}_{\lambda_0(e_k)}$ and symmetric about the plane $T^{e_k}_{\lambda_0(e_k)}$. This shows that $u,v$ are symmetric with respect to a point $x_0\in\R^N$ $(x_0 = \bigcap^N_{k=1}T^{e_k}_{\lambda_0(e_k)})$, which is the unique critical point for $u, v$. Furthermore, by exploiting the moving plane procedure in any direction $\nu\in S^{N-1}$, we finally get that $u, v$ are radial and (strictly) radially decreasing w.r.t. $x_{0}$. That is, positive solutions $u,v$ must assume the form
\[u(x)=u(|x-x_{0}|)=u(r), v(x)=v(|x-x_{0}|)=v(r) \]
some $x_{0}\in\mathbb{R}^{N}$, where $(u(r), v(r))$ with $r=|x-x_0|$ are the positive radial solutions to \eqref{eq1.1} with $u'(0)=v'(0)=0$ and $u'(r)\leq0, v'(r)\leq0 $ for any $r>0$.

\smallskip

Next, for $1<p<2$, we will prove that
\begin{equation}\label{final}
  u^\prime (r) < 0, \,\,v^\prime (r) < 0,\quad \,\,\,\forall \,\, r>0.
\end{equation}
We will prove \eqref{final} by contradiction arguments. We assume on the contrary that there exists $r_0>0$ such that $u^\prime (r_0) = 0$. Since Lemma \ref{re2333} and Remark \ref{re2334} implies that $|Z_u|=0$, one has $u'(r)<0$ for $r\in(0,r_{0})\cup(r_{0},+\infty)$, so $u(r)>u(r_0)$ in $[0,r_0)$. Using the Strong Maximum Principle and H\"{o}pf's Lemma in Lemma \ref{hopf} (c.f. \cite{JLV}, see also Theorem 2.1 in \cite{LDBS04}, Theorem 2.4 in \cite{LDSMLMSB} or \cite{LD,PS}) to the positive radial solution $\tilde{u}(x):=u(x)-u(r_0)=u(|x|)-u(r_{0})$ of
\begin{align*}
\left\{ \begin{array}{ll} \displaystyle
-\Delta_p \tilde{u}  = u^\alpha (r) v^\beta(r) > 0  \,\,\,\,&\mbox{in}\, \,B_{r_0}, \\
\tilde{u}>0   &\mbox{in}\,\,  B_{r_0},\\
\tilde{u}=0   &\mbox{on}\,\, \partial B_{r_0},
\end{array}
\right.\hspace{1cm}
\end{align*}
we obtain $\tilde{u}^\prime (r_0)=u'(r_{0})<0$, which is absurd. Hence \eqref{final} holds, i.e., $u'(r)<0$ for any $r>0$. Similarly, we have $v'(r)<0$ for any $r>0$ as well.

\end{proof}

\subsection{Proof of the uniqueness and classification result}\label{sc5}

\begin{proof}

  Let $(u, v)$ be a pair of solutions to system \eqref{eq1.1}. In subsections 4.1 and 4.2, we have proved that $u$ and $v$ are radially symmetric and monotone decreasing about some point $x_0$. Without loss of generality, we may assume that $x_0=0$.

    Since our solution $u$ is radially symmetric, we can write the first equation in \eqref{eq1.1} as
    $$\frac{-(r^{N-1}|u'(r)|^{p-2}u'(r))'}{r^{N-1}}=u^\alpha(r)v^\beta(r),$$
    where $r=|x|$.
    Recalling that $u'(r)<0$ and $v'(r)<0$, we get
    \begin{align}\label{Key}
        (r^{N-1}(-u'(r))^{p-1})'=r^{N-1}u^\alpha(r)v^\beta(r).
    \end{align}
    Integrating both sides of the above equation from $0$ to $r$ yields
    \begin{align}\label{eq5.10}
      -u'(r)=\left(\frac{1}{r^{N-1}} \int_0^r \tau^{N-1}u^\alpha(\tau) v^\beta(\tau) \mathrm{d}\tau\right)^{\frac{1}{p-1}},
        \end{align}
    which implies
    \begin{align}\label{eq5.1}
        u(0)-u(r)=\int_0^r \left(\frac{1}{s^{N-1}} \int_0^s \tau^{N-1}u^\alpha(\tau) v^\beta(\tau) \mathrm{d}\tau\right)^{\frac{1}{p-1}}\mathrm{d}s.
        \end{align}
    Similarly, we have
     \begin{align}\label{eq5.10'}
      -v'(r)=\left(\frac{1}{r^{N-1}} \int_0^r \tau^{N-1}v^\alpha(\tau) u^\beta(\tau) \mathrm{d}\tau\right)^{\frac{1}{p-1}},
        \end{align}
      which implies
    \begin{align}\label{eq5.1-}
        v(0)-v(r)=\int_0^r \left(\frac{1}{s^{N-1}} \int_0^s \tau^{N-1}u^\beta(\tau) v^\alpha(\tau) \mathrm{d}\tau\right)^{\frac{1}{p-1}}\mathrm{d}s.
    \end{align}
    Now we show that $u(0)=v(0)$. Otherwise, we may  suppose that
    \begin{align}\label{ineq5.1}
        u(0)<v(0).
    \end{align}
    Then by continuity, for small $r>0$,
    \begin{align}\label{ineq5.2}
        u(r)<v(r).
    \end{align}
    In other words, there exists an $R>0$, such that
    \begin{align}\label{ineq5.2-}
        u(r)<v(r), \qquad \forall \,\,\,r\in\,(0,R).
    \end{align}
    Let us define
    \begin{align}\label{a2}
    R_0:=\sup\{R>0|\,u(r)<v(r),\ \ \forall \,0<r<R\}.
    \end{align}
    Then $0<R_0 \leq +\infty$ and $u(R_0)=v(R_0)$, since $u(+\infty)=v(+\infty)=0$ by Theorem \ref{th2.1-}. By the definition of $R_0$ and $\alpha\leq\beta$, we have that
    $$u^\alpha v^\beta\geq u^\beta v^\alpha, \qquad \forall \,\, r\,\,\in \,\,(0, R_0).$$
    By \eqref{eq5.1} and \eqref{eq5.1-}, we have
    \begin{align}\label{ineq5.3}
    &0>u(0)-v(0)\\
    &\quad=\int_0^{R_0} \left(\frac{1}{s^{N-1}} \int_0^s \tau^{N-1}u^\alpha v^\beta(\tau) \mathrm{d}\tau\right)^{\frac{1}{p-1}}\mathrm{d}s\nonumber\\
    &\quad \quad -\int_0^{R_0}\left(\frac{1}{s^{N-1}} \int_0^s \tau^{N-1}u^\beta v^\alpha(\tau) \mathrm{d}\tau\right)^{\frac{1}{p-1}}\mathrm{d}s\geq 0.\nonumber
    \end{align}
    This is absurd. Similarly, we can show that $u(0)>v(0)$ is also impossible. Therefore, we must have $u(0)=v(0)$.

Now we aim to show that $u\equiv v$ by using contradiction argument. Assume on the contrary that $v\neq u$. Define
\begin{align}\label{24q}
r_0:=\sup\{r\geq0|\, u(s)=v(s),\ \ \forall \,s\leq r\}.
\end{align}
From $u(0)=v(0)$ and $u\ne v$, we know $0\leq r_0<+\infty$. Without loss of generality, we may assume that, there exists $\epsilon_{0}>0$ small enough, such that $u(r) > v(r)$ and $u'(r)>v'(r)$ for any $r\in(r_0,r_0+\epsilon_{0})$. Let us define
    \begin{align}\label{a3}
    R_*:=\sup\{R>0|\,u(r)>v(r),\ \ \forall \,r_{0}<r<R\}.
    \end{align}
    Then $r_{0}+\epsilon_0\leq R_* \leq +\infty$ and $u(R_*)=v(R_*)$, since $u(+\infty)=v(+\infty)=0$ by Theorem \ref{th2.1-}. We will show that $R_*=+\infty$.

    By subtracting equation \eqref{eq5.10'} from equation \eqref{eq5.10}, we can get, for any $r\in(r_0,R_*)$,
\begin{align}\label{25q}
	&\quad \quad u'(r)-v'(r) \\
&=\left(\frac{1}{r^{N-1}} \int_0^r \tau^{N-1}v^\alpha(\tau) u^\beta(\tau) \mathrm{d}\tau\right)^{\frac{1}{p-1}}-\left(\frac{1}{r^{N-1}} \int_0^r \tau^{N-1}u^\alpha(\tau) v^\beta(\tau) \mathrm{d}\tau\right)^{\frac{1}{p-1}}\geq 0, \nonumber
\end{align}
and $u'(r)-v'(r)$ is monotone increasing w.r.t. $r$ on $(r_0,R_*)$, and hence
\begin{equation}\label{a4}
  u'(r)-v'(r)\geq u'\left(r_0+\frac{\epsilon_{0}}{2}\right)-v'\left(r_0+\frac{\epsilon_{0}}{2}\right)=:\delta_0>0, \qquad \forall \, r\in\Big[r_0+\frac{\epsilon_{0}}{2},R_*\Big).
\end{equation}
If $R_*<+\infty$, then $u(R_*)-v(R_*)\geq u(r_0+\epsilon_{0})-v(r_0+\epsilon_{0})>0$, which contradicts with $u(R_*)=v(R_*)$. Thus we must have $R_*=+\infty$.

By Theorem \ref{th2.1-} and \eqref{a4}, we derive that, for any $r\in[r_0+\frac{\epsilon_{0}}{2},+\infty)$,
\begin{align}\label{a5}
  0<\delta_0\leq u'(r)-v'(r)\leq \frac{C}{r^{\frac{N-1}{p-1}}}\rightarrow 0, \qquad \text{as} \,\, r\rightarrow+\infty,
\end{align}
which is absurd. Thus we must have $u\equiv v$, and $u$ solves the single $D^{1,p}(\mathbb{R}^{N})$-critical $p$-Laplacian equation
\begin{equation}\label{a1}
  -\Delta_{p} u=u^{p^{\star}-1} \qquad \text{in} \,\, \mathbb{R}^{N}.
\end{equation}
From the classification results for \eqref{a1} in \cite{LDSMLMSB,BS16,VJ16}, we deduce that positive solutions $(u,v)$ must assume the form
\[u(x)=v(x)=\left[ \frac{\lambda^{\frac{1}{p-1}}\left(N^{\frac{1}{p}}\left(\frac{N-p}{p-1}\right)^{\frac{p-1}{p}}\right)}{\lambda^{\frac{p}{p-1}}+|x-x_0|^{\frac{p}{p-1}}} \right]^{\frac{N-p}{p}}.\]
This concludes our proof of Theorem \ref{th2}.
\end{proof}

\end{document}